\newtheorem{theorem}{Theorem}[section]
\newtheorem{definition}[theorem]{Definition}
\newtheorem{lemma}[theorem]{Lemma}
\newtheorem{proposition}[theorem]{Proposition}
\newtheorem{corollary}[theorem]{Corollary}
\newtheorem{remark}[theorem]{Remark}
\newtheorem{example}[theorem]{Example}
\author{Dieter Degrijse}
\address{Department of Mathematics, Catholic University of Leuven, Kortrijk, Belgium}
\email{Dieter.Degrijse@kuleuven-kortrijk.be}
\title{On a filtration of the second cohomology of nilpotent Lie algebras}
\keywords{Nilpotent Lie algebra, second cohomology, filtration}
\date{\today}
\thanks{The author was supported by the Research Fund K.U.Leuven.}
\begin{document}
\begin{abstract}
\noindent
We study a known filtration of the second cohomology of a finite dimensional nilpotent Lie algebra $\mathfrak{g}$ with coefficients in a finite dimensional nilpotent $\mathfrak{g}$-module $M$, that is based upon a refinement of the correspondence between $\mathrm{H}^2(\mathfrak{g},M)$ and equivalence classes of abelian extensions of $\mathfrak{g}$ by $M$. We give a different characterization of this filtration and as a corollary, we obtain an expression for the second Betti number of $\mathfrak{g}$. Using this expression, we find bounds for the second Betti number and derive a cohomological criterium for the existence of certain central extensions of $\mathfrak{g}$.
\end{abstract}
\maketitle
\section{Introduction}
Let $\mathfrak{g}$ be a finite dimensional $p$-step nilpotent Lie algebra and let $M$ be a finite dimensional $\mathfrak{g}$-module, both over an arbitrary field $k$.
$M$ is called a nilpotent $\mathfrak{g}$-module if the inductively defined increasing sequence of submodules $0=M_0 \subset M_1 \subset \ldots \subset M_{i-1} \subset M_i \subset \ldots $, with
\[ M_{i} := \{ m \in M \ | \ xm \in M_{i-1} \ \mbox{for all} \ x \in \mathfrak{g}\} \]
equals $M$ after a finite number of steps. $M$ is called $q$-step nilpotent if $M_{q-1}\neq M$ and $M_q=M$. \\
Assuming that $M$ is a $q$-step nilpotent $\mathfrak{g}$-module, there exists a well-known filtration of the cohomology space $\mathrm{H}^2(\mathfrak{g},M)$.
\begin{definition} \label{def: intro filt H^2}\rm For each $n \in \mathbb{N}$ and each $r\geq0$, one defines $F_r\mathrm{Hom}_k(\Lambda^n(\mathfrak{g}),M)$ to be
\[  \{ f \in \mathrm{Hom}_k(\Lambda^n(\mathfrak{g}),M) \ | \ f(x_1 \wedge \ldots \wedge x_n) \in M_{r-i_1-\ldots - i_n+1} \ \mbox{if} \  x_1 \wedge \ldots \wedge x_n \in \mathfrak{g}^{i_1}\wedge \ldots \wedge \mathfrak{g}^{i_n}\}. \]
Here, the spaces $\mathfrak{g}^{i}$ are the terms in the lower central series of $\mathfrak{g}$, i.e. $\mathfrak{g}^1=\mathfrak{g}$ and $\mathfrak{g}^{i+1}=[\mathfrak{g}^i,\mathfrak{g}]$.
This way, one obtains a filtration of the Chevalley-Eilenberg complex, that induces a filtration on the second cohomology.
\[ \ldots \subset F_{i-1}\mathrm{H}^2(\mathfrak{g},M) \subset F_i\mathrm{H}^2(\mathfrak{g},M) \subset \ldots \subset \mathrm{H}^2(\mathfrak{g},M). \]
\end{definition}
A case of particular interest is the one with $M=\mathfrak{g}$, where $\mathfrak{g}$ is given the adjoint $\mathfrak{g}$-module structure. Then $\mathfrak{g}$ is $p$-step nilpotent $\mathfrak{g}$-module, the sequence $0=M_0 \subset M_1 \subset \ldots \subset M_{i-1} \subset M_i \subset \ldots $ becomes the central ascending series of $\mathfrak{g}$ and we obtain a filtration $\ldots \subset F_{i-1}\mathrm{H}^2(\mathfrak{g},\mathfrak{g}) \subset F_i\mathrm{H}^2(\mathfrak{g},\mathfrak{g}) \subset \ldots \subset \mathrm{H}^2(\mathfrak{g},\mathfrak{g})$. It is well known that we can interpret the space $\mathrm{H}^2(\mathfrak{g},\mathfrak{g})$ as the set of infinitesimal deformation classes of $\mathfrak{g}$. Moreover, $\mathrm{H}^2(\mathfrak{g},\mathfrak{g})$ and the particular filtration described here play an important role in the study of the complex variety of nilpotent Lie algebras: let $\mathcal{L}^n$ be the variety of complex $n$-dimensional Lie algebras (see \cite{GozeKhakimdjanov}) and denote by $\mathcal{N}^n_i$ the Zariski closed algebraic subset of $\mathcal{L}^n$ consisting of nilpotent Lie algebras of class at most $i$, then the following holds (see \cite{GozeKhakimdjanov}).
\begin{theorem} Let $\mathfrak{g}$ be a $p$-step nilpotent Lie algebra of dimension $n$ over the complex numbers and let $q\geq p$ be an integer. Then the Zariski tangent space in $\mathfrak{g}$ to $\mathcal{N}^n_q$ consists of the $2$-cocycles $f \in \mathrm{Hom}_{\mathbb{c}}(\Lambda^2(\mathfrak{g}),\mathfrak{g})$ such that $\overline{f} \in  F_{q}\mathrm{H}^2(\mathfrak{g},\mathfrak{g})$.
\end{theorem}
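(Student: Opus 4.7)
The plan is to realize $\mathcal{L}^n$ as the affine subvariety of $\mathrm{Hom}_{\mathbb{C}}(\Lambda^2 V,V)$, with $V=\mathbb{C}^n$, cut out by the quadratic Jacobi-identity equations, and then describe $\mathcal{N}^n_q$ as the closed subset of $\mathcal{L}^n$ defined by the vanishing of every left-normed iterated bracket
\[ L(x_0,\dots,x_q):=\mathrm{ad}(x_0)\mathrm{ad}(x_1)\cdots\mathrm{ad}(x_{q-1})(x_q). \]
It is classical that the Zariski tangent space to $\mathcal{L}^n$ at the point $\mathfrak{g}$ is $Z^2(\mathfrak{g},\mathfrak{g})$: a direction $f$ is tangent iff $[\cdot,\cdot]+tf$ satisfies Jacobi modulo $t^2$, which is precisely the $2$-cocycle condition. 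Since $q\geq p$, the equations $L(x_0,\dots,x_q)=0$ already hold on $\mathfrak{g}$, so the Zariski tangent space to $\mathcal{N}^n_q$ at $\mathfrak{g}$ consists of those cocycles $f$ for which the linearization
\[ D_f(x_0,\dots,x_q):=\sum_{k=0}^{q-1}\mathrm{ad}(x_0)\cdots\mathrm{ad}(x_{k-1})\,f\bigl(x_k,[x_{k+1},[\dots,[x_{q-1},x_q]\dots]]\bigr) \]
vanishes identically. The theorem thus reduces to the statement that, as a subspace of $Z^2(\mathfrak{g},\mathfrak{g})$, the kernel of $f\mapsto D_f$ agrees with the preimage of $F_q\mathrm{H}^2(\mathfrak{g},\mathfrak{g})$.

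For the easy direction, assume $f\in F_q\mathrm{Hom}_{\mathbb{C}}(\Lambda^2\mathfrak{g},\mathfrak{g})$. The inner bracket $a_k:=[x_{k+1},[\dots,[x_{q-1},x_q]\dots]]$ lies in $\mathfrak{g}^{q-k}$, so the filtration condition with $i_1=1$, $i_2=q-k$ gives $f(x_k,a_k)\in M_{q-(q-k)}=M_k$; applying the $k$ operators $\mathrm{ad}(x_0),\dots,\mathrm{ad}(x_{k-1})$ drops us into $M_0=0$, so each summand of $D_f$ vanishes termwise. Since coboundaries $d\phi$ also belong to the kernel of $D_\bullet$ (they integrate to deformations isomorphic to $\mathfrak{g}$, which has nilpotency class $p\leq q$), the whole preimage of $F_q\mathrm{H}^2$ lies in the tangent space.

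The reverse inclusion is where I expect the real work to be concentrated. Given a cocycle $f$ with $D_f\equiv 0$, one must produce $\phi:\mathfrak{g}\to\mathfrak{g}$ so that $f-d\phi\in F_q\mathrm{Hom}$. My plan is to run a downward induction on the total weight $w:=i_1+i_2$, noting that only pairs with $i_1,i_2\leq p$ can produce nontrivial constraints since $\mathfrak{g}^{p+1}=0$. At weight $w$, one specializes $x_0,\dots,x_q$ in $D_f=0$ so that $x_k=y_1\in\mathfrak{g}^{i_1}$ and $[x_{k+1},[\dots,[x_{q-1},x_q]\dots]]=y_2\in\mathfrak{g}^{i_2}$, for a suitable choice of $k$; the remaining summands then evaluate $f$ on pairs of strictly higher weight and vanish by the inductive hypothesis, isolating the condition $\mathrm{ad}(x_0)\cdots\mathrm{ad}(x_{k-1})f(y_1,y_2)=0$. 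Combining this with the antisymmetry of $f$ and the $2$-cocycle identity (which rewrites $f([u,v],w)$ in terms of values of $f$ on pairs where one filtration depth has been lowered by one) eventually forces $f(y_1,y_2)\in M_{q-i_1-i_2+1}$, modulo a coboundary $d\phi$ that is constructed level by level on each $\mathfrak{g}^i/\mathfrak{g}^{i+1}$. Managing this bookkeeping is the main technical difficulty, and it is precisely what the alternative characterization of $F_\bullet\mathrm{H}^2$ developed later in the paper is designed to streamline.
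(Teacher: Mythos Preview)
The paper does not prove this theorem; it is quoted in the introduction from \cite{GozeKhakimdjanov} as background motivation, with no argument supplied. So there is no proof in the paper to compare against.

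That said, your setup is the standard one and is correct: the Zariski tangent space to $\mathcal{L}^n$ at $\mathfrak{g}$ is $Z^2(\mathfrak{g},\mathfrak{g})$, the extra equations cutting out $\mathcal{N}^n_q$ vanish at $\mathfrak{g}$ because $q\geq p$, and their linearization is your $D_f$. Your easy direction is fine (the $GL(V)$-orbit argument for coboundaries is cleaner than your parenthetical about ``integrating to isomorphic deformations'', but the conclusion is the same).

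For the hard direction, however, your proposed downward induction on the weight $w=i_1+i_2$ is working much harder than necessary, and you do not actually carry it out. There is a direct route using only Proposition~\ref{prop: vergne} (Vergne), which the paper does prove. For the abelian extension $\mathfrak{e}=\mathfrak{g}\oplus\mathfrak{g}$ with bracket $[(m,x),(n,y)]=([x,n]-[y,m]+f(x,y),[x,y])$ attached to the cocycle $f$, a short computation gives
\[
[(0,x_0),[(0,x_1),[\dots,[(0,x_{q-1}),(0,x_q)]\dots]]]=\bigl(D_f(x_0,\dots,x_q),\,0\bigr),
\]
while any left-normed $(q+1)$-fold bracket in $\mathfrak{e}$ with at least one entry of the form $(m,0)$ lands in $(\mathfrak{g}^{q+1},0)=0$, because the first factor is abelian and the module action is adjoint. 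Hence $\mathfrak{e}^{q+1}=0$ if and only if $D_f\equiv 0$, and Vergne's proposition converts this into $\overline{f}\in F_q\mathrm{H}^2(\mathfrak{g},\mathfrak{g})$. Both inclusions then follow at once. The level-by-level construction of $\phi$ you sketch is exactly what is hidden inside Vergne's choice of a splitting $s$ with $s(\mathfrak{g}^i)\subset\mathfrak{e}^i$; you are in effect reproving that proposition rather than invoking it.
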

If $M$ be a $q$-step nilpotent $\mathfrak{g}$-module, then every abelian extension of $\mathfrak{g}$ by $M$ is nilpotent of class $r$, with $\max\{p,q\}\leq r \leq p+q$. This observation suggest a filtration of the space of equivalence classes of abelian extensions of $\mathfrak{g}$ by $M$, which we denote by $\mathrm{Ext}(\mathfrak{g},M)$.
\begin{definition} \label{def: intro ext filt}\rm
 Let $\mathfrak{g}$ be a $p$-step nilpotent Lie algebra and let $M$ be a $q$-step nilpotent $\mathfrak{g}$-module. Take $r \in \mathbb{Z}$ such that $p \leq r \leq p+q$. With $F_r\mathrm{Ext}(\mathfrak{g},M)$ we mean the subset of $\mathrm{Ext}(\mathfrak{g},M)$ containing the equivalence classes of extensions
 \[ 0 \rightarrow M \rightarrow \mathfrak{e} \xrightarrow{\pi} \mathfrak{g} \rightarrow 0 \]
 such that $\mathfrak{e}$ is nilpotent of class at most $r$.
This defines a filtration
\[ \small F_{\max\{p,q\}}\mathrm{Ext}(\mathfrak{g},M) \subseteq  F_{\max\{p,q\}+1}\mathrm{Ext}(\mathfrak{g},M) \subseteq \ldots \subseteq F_{p+q-1}\mathrm{Ext}(\mathfrak{g},M) \subseteq F_{p+q}\mathrm{Ext}(\mathfrak{g},M)=\mathrm{Ext}(\mathfrak{g},M). \]
\end{definition}
It turns out that, under the bijective correspondence between $\mathrm{H}^2(\mathfrak{g},M)$ and $\mathrm{Ext}(\mathfrak{g},M)$, the spaces $F_r\mathrm{Ext}(\mathfrak{g},M)$ correspond to the spaces $F_r\mathrm{H}^2(\mathfrak{g},M)$ from Definition \ref{def: intro filt H^2} (see \cite{Vergne}). We would like to obtain yet another way of looking at the spaces $F_{r}\mathrm{H}^2(\mathfrak{g},M)$. To do this, we need the following definition.
\begin{definition} \rm Let $\mathfrak{g}$ be a $p$-step nilpotent Lie algebra with $|\frac{\mathfrak{g}}{[\mathfrak{g},\mathfrak{g}]}|=n$. A \emph{free $p$-step nilpotent extension} of $\mathfrak{g}$ is a short exact sequence of Lie algebras
\[ 0 \rightarrow \mathfrak{n} \rightarrow \mathfrak{f}_{n,p} \xrightarrow{\pi} \mathfrak{g} \rightarrow 0 \]
such that $\mathfrak{n} \subset [\mathfrak{f}_{n,p},\mathfrak{f}_{n,p}]$, where $\mathfrak{f}_{n,p}$ is the free $p$-step nilpotent Lie algebra on $n$ generators. We say $\mathfrak{g}$ is of \emph{depth} $d$ if $d$ is the largest integer such that $\mathfrak{n}\subseteq \mathfrak{f}_{n,p}^{d}$. \\
For integers $r \geq p$, a \emph{free $r$-step nilpotent extension} of $\mathfrak{g}$ is a short exact sequence of Lie algebras
\[ 0 \rightarrow \mathfrak{n'} \rightarrow \mathfrak{f}_{n,r} \xrightarrow{\pi'} \mathfrak{g} \rightarrow 0 \]
where $\mathfrak{f}_{n,r}$ is the free $r$-step nilpotent Lie algebra on $n$ generators and $\pi'$ is the composition of the canonical projection $\mathfrak{f}_{n,r} \rightarrow  \mathfrak{f}_{n,p}$ with a morphism $\pi: \mathfrak{f}_{n,p} \xrightarrow{\pi} \mathfrak{g}$, coming from a free $p$-step nilpotent extension of $\mathfrak{g}$.
\end{definition}
Every $p$-step nilpotent Lie algebra has a free $r$-step nilpotent extension, for any $r \geq p$, that is unique up to a certain notion of equivalence. \\ \\
We obtain the following characterization of the spaces $F_{r}\mathrm{H}^2(\mathfrak{g},M)$.
\begin{theorem} \label{th: intro} Let $\mathfrak{g}$ be a $p$-step nilpotent Lie algebra, $M$ a $q$-step nilpotent $\mathfrak{g}$-module and
\[ 0 \rightarrow \mathfrak{n} \rightarrow \mathfrak{f}_{n,r} \xrightarrow{\pi} \mathfrak{g}\rightarrow 0 \]
a free $r$-step nilpotent extension of $\mathfrak{g}$. Then
\[ F_r\mathrm{H}^2(\mathfrak{g},M) = \mathrm{Ker} \ \Big( \pi^2: \mathrm{H}^2(\mathfrak{g},M) \rightarrow \mathrm{H}^2(\mathfrak{f}_{n,r},M)\Big)\]
for every integer $r$ such that $\max\{p,q\} \leq r \leq p+q$.
Furthermore, for every integer $r$ such that $\max\{p,q\} \leq r \leq p+q$, we have an exact sequence
\[ 0 \rightarrow \mathrm{H}^1(\mathfrak{g},M) \rightarrow \mathrm{H}^1(\mathfrak{f}_{n,r},M) \rightarrow \mathrm{Hom}_{\mathfrak{g}}(\frac{\mathfrak{n}}{[\mathfrak{n},\mathfrak{n}]},M) \xrightarrow{d} F_r\mathrm{H}^2(\mathfrak{g},M) \rightarrow 0 \]
with $d(f)=\overline{f \circ \varphi_r}$,
 where $\varphi_r$ is a $2$-cocycle corresponding to
\[ 0 \rightarrow \frac{\mathfrak{n}}{[\mathfrak{n},\mathfrak{n}]} \rightarrow \frac{\mathfrak{f}_{n,r}}{[\mathfrak{n},\mathfrak{n}]} \xrightarrow{\pi} \mathfrak{g}\rightarrow 0. \]
If $M$ is a trivial $\mathfrak{g}$-module, then the map $d$ becomes an isomorphism.
\end{theorem}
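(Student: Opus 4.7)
The plan is to establish the three assertions in turn, leveraging three classical tools: the correspondence $\mathrm{H}^2(\mathfrak{g},M)\cong \mathrm{Ext}(\mathfrak{g},M)$ recalled before the theorem, the universal property of the free $r$-step nilpotent Lie algebra $\mathfrak{f}_{n,r}$, and the five-term exact sequence of the Hochschild--Serre spectral sequence applied to $0\to \mathfrak{n}\to \mathfrak{f}_{n,r}\to \mathfrak{g}\to 0$.

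For the equality $F_r\mathrm{H}^2(\mathfrak{g},M)=\mathrm{Ker}(\pi^2)$ I would argue both inclusions via the extension correspondence. Starting from $\overline{f}\in F_r\mathrm{H}^2(\mathfrak{g},M)$ represented by an extension $0\to M\to \mathfrak{e}\to \mathfrak{g}\to 0$ with $\mathfrak{e}$ of class at most $r$, I choose generators $x_1,\ldots,x_n$ of $\mathfrak{f}_{n,r}$ together with preimages $e_i\in \mathfrak{e}$ of $\pi(x_i)$; by the universal property of $\mathfrak{f}_{n,r}$, the assignment $x_i\mapsto e_i$ extends to a Lie algebra morphism $\sigma\colon \mathfrak{f}_{n,r}\to \mathfrak{e}$ lifting $\pi$, and $\sigma$ splits the pullback extension, forcing $\pi^2(\overline{f})=0$. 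Conversely, when $\pi^2(\overline{f})=0$ the pullback $\mathfrak{f}_{n,r}\times_{\mathfrak{g}}\mathfrak{e}$ splits and is isomorphic to the semidirect product $M\rtimes \mathfrak{f}_{n,r}$, of which $\mathfrak{e}$ is a quotient by $\mathfrak{n}$. An induction on $k$ yields
\[ (M\rtimes \mathfrak{f}_{n,r})^k \subseteq \mathfrak{f}_{n,r}^k + \mathfrak{g}^{(k-1)}M, \]
where $\mathfrak{g}^{(j)}M$ denotes the $j$-fold iterated action of $\mathfrak{g}$ on $M$; since $M$ is $q$-step nilpotent and $r\geq q$, one has $\mathfrak{g}^{(r)}M=0$, so $(M\rtimes \mathfrak{f}_{n,r})^{r+1}=0$ and a fortiori $\mathfrak{e}^{r+1}=0$. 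I expect the main obstacle to be precisely this induction, which must carefully couple the lower central series of $\mathfrak{f}_{n,r}$ with the increasing filtration $\{M_i\}$ defining $q$-step nilpotency.

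For the exact sequence, I would invoke the five-term Hochschild--Serre sequence of $0\to \mathfrak{n}\to \mathfrak{f}_{n,r}\to \mathfrak{g}\to 0$:
\[ 0\to \mathrm{H}^1(\mathfrak{g},M)\to \mathrm{H}^1(\mathfrak{f}_{n,r},M)\to \mathrm{H}^1(\mathfrak{n},M)^{\mathfrak{g}}\xrightarrow{d}\mathrm{H}^2(\mathfrak{g},M)\xrightarrow{\pi^2}\mathrm{H}^2(\mathfrak{f}_{n,r},M). \]
Since $\mathfrak{n}\subseteq \mathrm{Ker}(\pi)$, the action of $\mathfrak{n}$ on $M$ is trivial, whence $\mathrm{H}^1(\mathfrak{n},M)=\mathrm{Hom}_k(\mathfrak{n}/[\mathfrak{n},\mathfrak{n}],M)$ and passage to $\mathfrak{g}$-invariants yields $\mathrm{Hom}_{\mathfrak{g}}(\mathfrak{n}/[\mathfrak{n},\mathfrak{n}],M)$. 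Combined with the kernel identification above, the image of $d$ equals $F_r\mathrm{H}^2(\mathfrak{g},M)$, producing the claimed exact sequence. The cocycle formula $d(f)=\overline{f\circ \varphi_r}$ is the standard description of the transgression: lifting $f$ to a $1$-cochain on $\mathfrak{f}_{n,r}/[\mathfrak{n},\mathfrak{n}]$ and applying the Chevalley--Eilenberg differential returns $f\circ \varphi_r$ as a $2$-cocycle on $\mathfrak{g}$ pulled back via the projection, by the very definition of $\varphi_r$.

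For the trivial-coefficient assertion, $\mathrm{H}^1(-,M)$ identifies with $\mathrm{Hom}_k(-/[-,-],M)$; the defining condition $\mathfrak{n}\subseteq [\mathfrak{f}_{n,r},\mathfrak{f}_{n,r}]$ of a free $r$-step nilpotent extension forces the induced map $\mathfrak{f}_{n,r}/[\mathfrak{f}_{n,r},\mathfrak{f}_{n,r}]\to \mathfrak{g}/[\mathfrak{g},\mathfrak{g}]$ to be an isomorphism between two $n$-dimensional vector spaces. Consequently $\mathrm{H}^1(\mathfrak{g},M)\to \mathrm{H}^1(\mathfrak{f}_{n,r},M)$ is an isomorphism, and exactness forces $d$ to be injective as well as surjective.
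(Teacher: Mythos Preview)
Your proposal is correct and follows essentially the same route as the paper: both arguments identify $F_r\mathrm{H}^2(\mathfrak{g},M)$ with $\mathrm{Ker}(\pi^2)$ via the extension interpretation and the universal property of $\mathfrak{f}_{n,r}$ (the paper packages the forward inclusion through the pullback construction and naturality of the five-term sequence, while you build the splitting $\sigma$ directly, but the content is identical), and both derive the exact sequence and the formula for $d$ from the Hochschild--Serre five-term sequence. The only cosmetic differences are that the paper cites its Proposition~\ref{prop: bound nilp class} for the nilpotency of $M\rtimes\mathfrak{f}_{n,r}$ where you sketch the induction explicitly, and for the trivial-coefficient claim the paper shows the restriction $\mathrm{H}^1(\mathfrak{f}_{n,r},k)\to\mathrm{H}^1(\mathfrak{n},k)^{\mathfrak{g}}$ vanishes whereas you show the inflation $\mathrm{H}^1(\mathfrak{g},M)\to\mathrm{H}^1(\mathfrak{f}_{n,r},M)$ is an isomorphism; these are equivalent by exactness.
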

As a first corollary, we obtain a formula for $|F_2\mathrm{H}^2(\mathfrak{g},k)|$ and $|F_2\mathrm{H}^2(\mathfrak{g},\mathfrak{g})|$ in the case where $\mathfrak{g}$ is a $2$-step nilpotent Lie algebra.
\begin{corollary} Let $\mathfrak{g}$ be a $2$-step nilpotent Lie algebra with $b_1(\mathfrak{g})=n$, then
\begin{eqnarray*}
|F_2\mathrm{H}^2(\mathfrak{g},k)| & = & \binom{n}{2}+n - |\mathfrak{g}|; \\
|F_2\mathrm{H}^2(\mathfrak{g},\mathfrak{g})| &=& \binom{n+1}{2}|Z(\mathfrak{g})|- n|\mathfrak{g}| - |\mathfrak{g}||Z(\mathfrak{g})|+|\mathrm{Der}(\mathfrak{g})|.
\end{eqnarray*}
Here, $Z(\mathfrak{g})$ is the center of $\mathfrak{g}$ and $\mathrm{Der}(\mathfrak{g})$ is the derivation algebra of $\mathfrak{g}$.
\end{corollary}
The Betti numbers of a finite dimensional Lie algebra over a field $k$ are by definition the dimensions of the different cohomology spaces with trivial coefficients $k$. For an $m$-dimensional nilpotent Lie algebra $\mathfrak{g}$, it is known that the Betti numbers satisfy $2 \leq b_i(\mathfrak{g})\leq \binom{m}{i}$ for all $i \in \{1,\ldots,m-1\}$ (see \cite{Dixmier}), with strict upper bound if and only if $\mathfrak{g}$ is non-abelian (see \cite{Nomizu}).
The Betti numbers of $\mathfrak{g}$ also satisfy a \emph{Golod-Shafarevich} type inequality: $\frac{b_1(\mathfrak{g})^2}{4} \leq b_2(\mathfrak{g})$ (see \cite{Koch}) and some \emph{Euler-Poincar\'{e}} type inequalities: $1 \leq \sum_{k=0}^i(-1)^{k+i}b_k(\mathfrak{g})$ for all $i \in \{0,\ldots,m-1\}$ (see \cite{Malliavin}). More recently, it was proven in \cite{Cairns} that if $\mathfrak{g}$ is non-abelian nilpotent and has dimension $m \geq 3$, one has $b_i(\mathfrak{g}) \leq \binom{m}{i}-\binom{m-2}{i-1}$. \\
Despite these results, the behavior of the Betti numbers of nilpotent Lie algebras is still unknown for the most part and several problems remain unsolved. The most famous perhaps, is the \emph{Toral rank conjecture} (attributed to S. Halperin, see \cite{Halperin}), which claims that for an $m$-dimensional nilpotent Lie algebra $\mathfrak{g}$, one has $\sum_{i=0}^mb_i(\mathfrak{g})\geq 2^z$, where $z$ denotes the dimension of $Z(\mathfrak{g})$, the center of $\mathfrak{g}$. This conjecture has been proven for $2$-step nilpotent Lie algebras (see \cite{Deninger}) and for nilpotent Lie algebras $\mathfrak{g}$ with $| Z(\mathfrak{g})|\leq 5$ or $| \frac{\mathfrak{g}}{Z(\mathfrak{g})} |\leq 7$ or $| \mathfrak{g} |\leq 14$ (see \cite{Cairns}). In fact, for $2$-step nilpotent Lie algebras, a  better lower bound was achieved in \cite{Tirao}. \\ \\
\indent Applying Theorem \ref{th: intro} to trivial $\mathfrak{g}$-modules, we obtain the following expression for the second Betti number.
\begin{corollary} \label{cor: intro betto} Let $\mathfrak{g}$ be a $p$-step nilpotent Lie algebra with free $p$-step nilpotent extension
\[ 0 \rightarrow \mathfrak{n} \rightarrow \mathfrak{f}_{n,p} \xrightarrow{\pi} \mathfrak{g} \rightarrow 0, \]
then
\begin{eqnarray*}
|F_p\mathrm{H}^2(\mathfrak{g},k)| &=& |\frac{\mathfrak{n}}{[\mathfrak{f}_{n,p},\mathfrak{n}]}|;\\
|\mathrm{H}^2(\mathfrak{g},k)|&=&  |\frac{\mathfrak{n}}{[\mathfrak{f}_{n,p},\mathfrak{n}]}| + b_2(\mathfrak{f}_{n,p})-|[i(\mathfrak{n}),\mathfrak{f}_{n,p+1}]\cap \mathfrak{f}_{n,p+1}^{p+1}|,
\end{eqnarray*}
where $i: \mathfrak{f}_{n,p} \rightarrow \mathfrak{f}_{n,p+1}$ is the canonical vector space inclusion.
\end{corollary}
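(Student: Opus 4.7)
The plan is to apply Theorem \ref{th: intro} twice to the trivial module $M=k$ (which is $1$-step nilpotent), first with $r=p$ to obtain the formula for $|F_p\mathrm{H}^2(\mathfrak{g},k)|$, and then with $r=p+1$ to obtain $|\mathrm{H}^2(\mathfrak{g},k)|$, using the given free $p$-step nilpotent extension as well as its associated free $(p+1)$-step nilpotent extension $0\to\mathfrak{n}'\to\mathfrak{f}_{n,p+1}\to\mathfrak{g}\to 0$. In both cases, the assumption that $M=k$ is trivial makes the map $d$ in the exact sequence of Theorem \ref{th: intro} an isomorphism, reducing the problem to computing a $\mathrm{Hom}_{\mathfrak{g}}$-space and relating it back to the structural data.

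For the first identity, take $r=p$. Because $\mathfrak{g}$ acts on $\mathfrak{n}/[\mathfrak{n},\mathfrak{n}]$ via the $\mathfrak{f}_{n,p}$-adjoint action, $\mathfrak{g}$-invariant functionals to the trivial module $k$ are exactly those vanishing on $[\mathfrak{f}_{n,p},\mathfrak{n}]/[\mathfrak{n},\mathfrak{n}]$. Hence $\mathrm{Hom}_{\mathfrak{g}}(\mathfrak{n}/[\mathfrak{n},\mathfrak{n}],k)\cong\mathrm{Hom}_k(\mathfrak{n}/[\mathfrak{f}_{n,p},\mathfrak{n}],k)$, and the isomorphism $d$ yields $|F_p\mathrm{H}^2(\mathfrak{g},k)|=|\mathfrak{n}/[\mathfrak{f}_{n,p},\mathfrak{n}]|$.

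For the second identity, take $r=p+1$ in the exact sequence associated to $0\to\mathfrak{n}'\to\mathfrak{f}_{n,p+1}\to\mathfrak{g}\to 0$. Note $F_{p+1}\mathrm{H}^2(\mathfrak{g},k)=\mathrm{H}^2(\mathfrak{g},k)$ since the filtration terminates at $p+q=p+1$. The outer edge map $\mathrm{H}^1(\mathfrak{g},k)\to\mathrm{H}^1(\mathfrak{f}_{n,p+1},k)$ is induced by the abelianization map $\mathfrak{f}_{n,p+1}/[\mathfrak{f}_{n,p+1},\mathfrak{f}_{n,p+1}]\to\mathfrak{g}/[\mathfrak{g},\mathfrak{g}]$, which is an isomorphism since $\mathfrak{n}'\subseteq [\mathfrak{f}_{n,p+1},\mathfrak{f}_{n,p+1}]$; both sides therefore have dimension $n$ and the map is an isomorphism. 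The four-term sequence collapses and the same argument as above gives $|\mathrm{H}^2(\mathfrak{g},k)|=|\mathfrak{n}'/[\mathfrak{f}_{n,p+1},\mathfrak{n}']|$. Moreover, applying exactly this reasoning to $\mathfrak{f}_{n,p}$ itself, whose free $(p+1)$-step extension has kernel $\mathfrak{f}_{n,p+1}^{p+1}$ on which $[\mathfrak{f}_{n,p+1},-]$ acts trivially, yields the useful identity $b_2(\mathfrak{f}_{n,p})=|\mathfrak{f}_{n,p+1}^{p+1}|$.

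The main task is then the book-keeping identifying $|\mathfrak{n}'/[\mathfrak{f}_{n,p+1},\mathfrak{n}']|$ with the right-hand side of the claimed formula. Choosing the vector-space splitting $i$, we may write $\mathfrak{n}'=i(\mathfrak{n})\oplus\mathfrak{f}_{n,p+1}^{p+1}$, and since $[\mathfrak{f}_{n,p+1},\mathfrak{f}_{n,p+1}^{p+1}]=0$ we get $[\mathfrak{f}_{n,p+1},\mathfrak{n}']=[\mathfrak{f}_{n,p+1},i(\mathfrak{n})]$. Setting $C:=[\mathfrak{f}_{n,p+1},i(\mathfrak{n})]$, a dimension count gives $|\mathfrak{n}'/C|=|\mathfrak{n}|+|\mathfrak{f}_{n,p+1}^{p+1}|-|C|$. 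The canonical projection $\mathfrak{f}_{n,p+1}\to\mathfrak{f}_{n,p}$ sends $C$ onto $[\mathfrak{f}_{n,p},\mathfrak{n}]$ with kernel $C\cap\mathfrak{f}_{n,p+1}^{p+1}$, so $|C|=|[\mathfrak{f}_{n,p},\mathfrak{n}]|+|C\cap\mathfrak{f}_{n,p+1}^{p+1}|$. Substituting this together with $|\mathfrak{n}|-|[\mathfrak{f}_{n,p},\mathfrak{n}]|=|\mathfrak{n}/[\mathfrak{f}_{n,p},\mathfrak{n}]|$ and $|\mathfrak{f}_{n,p+1}^{p+1}|=b_2(\mathfrak{f}_{n,p})$ produces the stated formula. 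The only subtlety—and the step most likely to require care—is verifying that the intersection term $|[i(\mathfrak{n}),\mathfrak{f}_{n,p+1}]\cap\mathfrak{f}_{n,p+1}^{p+1}|$ is independent of the auxiliary vector-space splitting $i$, which follows because any two such splittings differ by a map into $\mathfrak{f}_{n,p+1}^{p+1}$, which is central.
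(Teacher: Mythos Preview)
Your proof is correct and follows essentially the same approach as the paper's proof of this corollary (stated and proved there as Corollary~\ref{cor: central dimension exprs}): apply the exact sequence of Theorem~\ref{th: intro} with trivial coefficients at $r=p$ and $r=p+1$, use $\mathfrak{n}\subset[\mathfrak{f}_{n,r},\mathfrak{f}_{n,r}]$ to see that $d$ is an isomorphism, and then unwind $\mathfrak{n}'/[\mathfrak{f}_{n,p+1},\mathfrak{n}']$ via the decomposition $\mathfrak{n}'=i(\mathfrak{n})\oplus\mathfrak{f}_{n,p+1}^{p+1}$. Your dimension-count for this last step is in fact more explicit than the paper, which simply asserts the direct-sum isomorphism; your remark on independence from the choice of splitting $i$ is also a welcome addition, since the paper relies on the canonical grading of $\mathfrak{f}_{n,p+1}$ to single one out.
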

We also obtain a necessary and sufficient cohomological condition for the existence of central extensions of class $p+1$, i.e. short exact sequences $0 \rightarrow M \rightarrow \mathfrak{e} \rightarrow \mathfrak{g} \rightarrow 0$ where $M$ is contained in the center of $\mathfrak{e}$ and $\mathfrak{e}$ is $(p+1)$-step nilpotent.
\begin{corollary}
Let $\mathfrak{g}$ be a $p$-step nilpotent Lie algebra with free $p$-step nilpotent extension
\[ 0 \rightarrow \mathfrak{n} \rightarrow \mathfrak{f}_{n,p} \xrightarrow{\pi} \mathfrak{g} \rightarrow 0. \] Then $\mathfrak{g}$ admits a central extension of class $p+1$ if and only if
$| \frac{\mathfrak{n}}{[\mathfrak{f}_{n,p},\mathfrak{n}]}| < b_2(\mathfrak{g})$.
In particular, if $\mathfrak{n}$ is generated by a single element $X \in [\mathfrak{f}_{n,p},\mathfrak{f}_{n,p}]$, then $\mathfrak{g}$ admits a central extension of class $p+1$.
\end{corollary}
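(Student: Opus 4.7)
The plan is to interpret the dimension formula $|F_p\mathrm{H}^2(\mathfrak{g},k)|=|\mathfrak{n}/[\mathfrak{f}_{n,p},\mathfrak{n}]|$ of Corollary \ref{cor: intro betto} through the bijective correspondence $F_r\mathrm{H}^2(\mathfrak{g},M)\leftrightarrow F_r\mathrm{Ext}(\mathfrak{g},M)$ discussed right after Definition \ref{def: intro ext filt}. Since $b_2(\mathfrak{g})=|\mathrm{H}^2(\mathfrak{g},k)|$, the stated inequality $|\mathfrak{n}/[\mathfrak{f}_{n,p},\mathfrak{n}]|<b_2(\mathfrak{g})$ is simply the condition $F_p\mathrm{H}^2(\mathfrak{g},k)\subsetneq \mathrm{H}^2(\mathfrak{g},k)$, so the real task is to show that this strict inclusion is equivalent to the existence of a central extension of $\mathfrak{g}$ of class $p+1$.

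As a first step I would reduce to the one-dimensional trivial coefficient module. Starting from any central extension $0\to M\to\mathfrak{e}\to\mathfrak{g}\to 0$ with $\mathfrak{e}$ of class exactly $p+1$, the ideal $\mathfrak{e}^{p+1}$ is nonzero, and $M$ being central together with $\mathfrak{g}=\mathfrak{e}/M$ of class $p$ forces $\mathfrak{e}^{p+1}\subseteq M$. Choosing a nonzero $x\in\mathfrak{e}^{p+1}$ and a codimension-one complement $M'$ of $kx$ in $M$ (automatically an ideal of $\mathfrak{e}$), the quotient $0\to M/M'\to\mathfrak{e}/M'\to\mathfrak{g}\to 0$ is a central extension by $M/M'\cong k$ in which $\mathfrak{e}/M'$ retains class $p+1$ because the image of $x$ stays nonzero. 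Hence it suffices to consider $M=k$, for which $q=1$, so the filtration of Definition \ref{def: intro ext filt} degenerates into the two terms $F_p\mathrm{Ext}(\mathfrak{g},k)\subseteq F_{p+1}\mathrm{Ext}(\mathfrak{g},k)=\mathrm{Ext}(\mathfrak{g},k)$. Since $\mathfrak{g}$ has class exactly $p$, every central extension surjecting onto $\mathfrak{g}$ has class at least $p$, so $F_p\mathrm{Ext}(\mathfrak{g},k)$ is precisely the set of extensions of class $p$. Via the correspondence, a central extension of class $p+1$ exists if and only if $F_p\mathrm{H}^2(\mathfrak{g},k)\subsetneq\mathrm{H}^2(\mathfrak{g},k)$, and Corollary \ref{cor: intro betto} identifies this with the stated inequality.

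For the \emph{in particular} clause, if $\mathfrak{n}$ is generated as an ideal of $\mathfrak{f}_{n,p}$ by a single element $X\in[\mathfrak{f}_{n,p},\mathfrak{f}_{n,p}]$, then every element of $\mathfrak{n}$ is a linear combination of iterated brackets $[y_s,[\ldots,[y_1,X]\ldots]]$ with $y_i\in\mathfrak{f}_{n,p}$; all terms with $s\geq 1$ already lie in $[\mathfrak{f}_{n,p},\mathfrak{n}]$, so $\mathfrak{n}/[\mathfrak{f}_{n,p},\mathfrak{n}]$ has dimension at most one. The hypothesis $X\in[\mathfrak{f}_{n,p},\mathfrak{f}_{n,p}]\setminus\{0\}$ forces $p\geq 2$, so $\mathfrak{g}$ is non-abelian of dimension $\geq 3$, and Dixmier's bound \cite{Dixmier} gives $b_2(\mathfrak{g})\geq 2$, making the required strict inequality automatic. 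The step that will require the most care is the initial reduction to one-dimensional coefficients, specifically the verification that after quotienting by $M'$ the extension $\mathfrak{e}/M'$ still has nilpotency class $p+1$ rather than dropping to $p$; once this is in place, the remainder is a straightforward repackaging of Corollary \ref{cor: intro betto} through the extension–cohomology dictionary.
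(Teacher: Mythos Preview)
Your proof is correct and follows the paper's approach: both deduce the equivalence directly from $|F_p\mathrm{H}^2(\mathfrak{g},k)|=|\mathfrak{n}/[\mathfrak{f}_{n,p},\mathfrak{n}]|$ (Corollary~\ref{cor: central dimension exprs}) via the filtration--extension correspondence, and both treat the single-generator case by showing the quotient is at most one-dimensional and invoking the Dixmier bound $b_2(\mathfrak{g})\geq 2$. Two minor differences are worth noting. First, you include a reduction from an arbitrary central module $M$ to $M=k$; the paper omits this because in Section~3 it \emph{defines} a ``central extension of class $p+1$'' to already have one-dimensional kernel, so your extra step simply reconciles the broader phrasing in the introduction with the narrower one used in the proof. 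Second, your argument that $\mathfrak{n}/[\mathfrak{f}_{n,p},\mathfrak{n}]$ is spanned by the image of $X$ is more direct than the paper's, which writes $\mathfrak{n}$ as a sum of spaces $\gamma_i(\mathfrak{f}_{n,p},X)$ and tacitly assumes $X$ lies in a single graded piece $H_r(n)$; your version works for any $X\in[\mathfrak{f}_{n,p},\mathfrak{f}_{n,p}]$ without that homogeneity assumption.
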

The \emph{type} of $\mathfrak{g}$ is defined to be the sequence of numbers $(n_1,\ldots,n_i,\ldots,n_p) $, where $n_i$ equals the dimension of  $\mathfrak{g}^{i}/\mathfrak{g}^{i+1}$.  As an application of Corollary \ref{cor: intro betto}, we obtain bounds for the second Betti number of $\mathfrak{g}$ in terms of the type and depth of $\mathfrak{g}$.
\begin{corollary} Let $\mathfrak{g}$ be a $p$-step nilpotent Lie algebra of type $(n_1,n_2,\ldots,n_{p})$, dimension $m$, depth $d$ and with $p$-step nilpotent extension
\[ 0 \rightarrow \mathfrak{n} \rightarrow \mathfrak{f}_{n_1,p} \xrightarrow{\pi} \mathfrak{g} \rightarrow 0. \]
Define $c:= b_2(\mathfrak{f}_{n_1,d-1})-n_{d} $ and $C:= \binom{n_1}{2}+(m-n_1)(n_1-1)$ . In general, we have
\[ c \leq b_2(\mathfrak{g}) \leq C.\]
More specifically, if $\mathfrak{n}$ is generated by Lie monomials of degree $d$, we have
\[ c \leq b_2(\mathfrak{g}) \leq \min\{c + b_2(\mathfrak{f}_{n_1,p})-b_2(\mathfrak{f}_{n_1,p-1})+n_p,C \} \]
and if $\mathfrak{g}$ is of depth $p$ then
\[ c + \max{\{0,b_2(\mathfrak{f}_{n_1,p})-n_1b_2(\mathfrak{f}_{n_1,p-1})+n_1n_p\}} \leq b_2(\mathfrak{g}) \leq \min\{c + b_2(\mathfrak{f}_{n_1,p})-b_2(\mathfrak{f}_{n_1,p-1})+n_p,C \}. \]
\end{corollary}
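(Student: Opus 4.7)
The natural starting point is Corollary~\ref{cor: intro betto}, which expresses
\[ b_2(\mathfrak{g}) = |\mathfrak{n}/[\mathfrak{f}_{n_1,p},\mathfrak{n}]| + b_2(\mathfrak{f}_{n_1,p}) - |[i(\mathfrak{n}),\mathfrak{f}_{n_1,p+1}] \cap \mathfrak{f}_{n_1,p+1}^{p+1}|, \]
and the whole proof consists in estimating the three summands. A useful preliminary input is the Hopf-formula identity $|\mathfrak{f}_{n_1,p}^i/\mathfrak{f}_{n_1,p}^{i+1}| = b_2(\mathfrak{f}_{n_1,i-1})$ (valid for $i \leq p$), which allows the bounds in the statement to be translated freely between graded dimensions of the free nilpotent Lie algebras and their second Betti numbers; I write $f_i$ for this common value.

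For the \emph{general lower bound} $c \leq b_2(\mathfrak{g})$ I would bypass Corollary~\ref{cor: intro betto}: the depth hypothesis forces $\mathfrak{g}/\mathfrak{g}^d \cong \mathfrak{f}_{n_1,d-1}$, and the Hochschild--Serre five-term exact sequence for $0 \to \mathfrak{g}^d \to \mathfrak{g} \to \mathfrak{f}_{n_1,d-1} \to 0$, combined with the observation that $\mathfrak{g}^d \subseteq [\mathfrak{g},\mathfrak{g}]$ makes the induced map on $n_1$-dimensional abelianisations an isomorphism, collapses to
\[ H_2(\mathfrak{g},k) \to H_2(\mathfrak{f}_{n_1,d-1},k) \twoheadrightarrow \mathfrak{g}^d/\mathfrak{g}^{d+1} \to 0, \]
from which a dimension count gives $b_2(\mathfrak{g}) \geq b_2(\mathfrak{f}_{n_1,d-1}) - n_d = c$. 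For the \emph{general upper bound} $b_2(\mathfrak{g}) \leq C$, I pass to the Chevalley--Eilenberg chain complex and use the algebraic identity $C = \binom{m}{2} - (m-n_1) - \binom{m-n_1}{2}$ to reduce to showing that the boundary $\partial_3: \Lambda^3 \mathfrak{g} \to \Lambda^2 \mathfrak{g}$ has rank at least $\binom{m-n_1}{2}$. Fix a splitting $\mathfrak{g} = V \oplus [\mathfrak{g},\mathfrak{g}]$ with $|V| = n_1$; a downward induction on the lower central series shows $[\mathfrak{g},\mathfrak{g}] = [V,\mathfrak{g}]$, so each basis element $e_r \in [\mathfrak{g},\mathfrak{g}]$ can be written $\sum_\alpha [e_{i_\alpha},y_\alpha]$ with $e_{i_\alpha} \in V$. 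Expanding $\sum_\alpha \partial_3(e_{i_\alpha} \wedge y_\alpha \wedge e_s)$ and a further downward induction on the lower-central degree of $e_s$ show that every $e_r \wedge e_s$ with $e_r,e_s \in [\mathfrak{g},\mathfrak{g}]$ lies in $\mathrm{im}(\partial_3) + V \wedge \mathfrak{g}$; the composition $\mathrm{im}(\partial_3) \hookrightarrow \Lambda^2 \mathfrak{g} \twoheadrightarrow \Lambda^2 \mathfrak{g}/(V \wedge \mathfrak{g}) \cong \Lambda^2 [\mathfrak{g},\mathfrak{g}]$ is therefore surjective, giving the required rank estimate.

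For the \emph{sharper bounds} I return to Corollary~\ref{cor: intro betto} and control the two non-trivial summands separately. The generation hypothesis implies $[\mathfrak{f}_{n_1,p},\mathfrak{n}] \subseteq \mathfrak{f}_{n_1,p}^{d+1}$ and that the generators of $\mathfrak{n}$ are linearly independent modulo $\mathfrak{f}_{n_1,p}^{d+1}$, so $|\mathfrak{n}/[\mathfrak{f}_{n_1,p},\mathfrak{n}]| = f_d - n_d = c$ exactly. Decomposing the intersection term into graded pieces in $\mathfrak{f}_{n_1,p+1}$ and using $\mathfrak{f}_{n_1,p+1}^{p+2} = 0$ keeps only brackets of total degree $p+1$. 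In the depth-$p$ case $\mathfrak{n}$ is central in $\mathfrak{f}_{n_1,p}$, so the intersection collapses to $[i(\mathfrak{n}),V]$, which is spanned by the at most $n_1 \cdot c$ elements $[X_j,x_l]$ with $X_j$ a basis of $\mathfrak{n}$ and $x_l$ a generator of $\mathfrak{f}_{n_1,p+1}$; this yields the \emph{upper} bound $\leq n_1 c$ on the intersection and hence the stated \emph{lower} bound on $b_2(\mathfrak{g})$. The matching \emph{lower} bound $\geq f_p - n_p$ on the intersection (which produces the upper bound on $b_2(\mathfrak{g})$) rests on the injectivity of $\mathrm{ad}: \mathfrak{f}_{n_1,p+1}^p/\mathfrak{f}_{n_1,p+1}^{p+1} \to \mathrm{Hom}(V,\mathfrak{f}_{n_1,p+1}^{p+1})$, whose kernel is $Z(\mathfrak{f}_{n_1,p+1}) \cap \mathfrak{f}_{n_1,p+1}^p = \mathfrak{f}_{n_1,p+1}^{p+1}$. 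This last rank estimate is the main obstacle I anticipate: a purely linear-algebraic argument fails, since an abstract $c$-dimensional subspace of $\mathrm{Hom}(V,W)$ can have union of images of dimension as small as one, so the proof must exploit the Hall--Lyndon basis of the free nilpotent Lie algebra to exhibit, for each element of the natural lift of $\mathfrak{n} \cap \mathfrak{f}_{n_1,p}^p$, a specific bracket $[X,v]$ projecting to a distinct Hall monomial in $\mathfrak{f}_{n_1,p+1}^{p+1}$.
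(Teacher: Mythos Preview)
Your arguments for the two general bounds are correct but differ from the paper's. For the lower bound $c\le b_2(\mathfrak g)$ the paper does not pass to the quotient $\mathfrak g/\mathfrak g^{d}\cong\mathfrak f_{n_1,d-1}$; instead it bounds $|F_p\mathrm H^2(\mathfrak g,k)|=|\mathfrak n/[\mathfrak f_{n_1,p},\mathfrak n]|$ from below by $|\mathfrak n/(\mathfrak n\cap\mathfrak f_{n_1,p}^{d+1})|$ and reads that off from $\mathfrak f_{n_1,p}^{d}/\mathfrak f_{n_1,p}^{d+1}\twoheadrightarrow\mathfrak g^{d}/\mathfrak g^{d+1}$. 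Your five–term–sequence route is cleaner when one only wants the inequality, but the paper's route also pins down exactly when equality holds. For the upper bound $C$ the paper argues by induction on $p$ via the Hochschild--Serre spectral sequence of $0\to\mathfrak g^{p}\to\mathfrak g\to\mathfrak g/\mathfrak g^{p}\to0$, using $|E_\infty^{1,1}|\le n_1n_p$; your direct estimate $\operatorname{rank}\partial_3\ge\binom{m-n_1}{2}$ is a nice alternative and avoids spectral sequences altogether.

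Where your proposal becomes incomplete is the sharper \emph{upper} bound $b_2(\mathfrak g)\le c+b_2(\mathfrak f_{n_1,p})-b_2(\mathfrak f_{n_1,p-1})+n_p$. This requires a \emph{lower} bound on the intersection term $[i(\mathfrak n),\mathfrak f_{n_1,p+1}]\cap\mathfrak f_{n_1,p+1}^{p+1}$, and you only treat this in the depth-$p$ case, even though the degree-$d$ statement needs it too (there it combines with $|F_p\mathrm H^2|=c$). Moreover, the obstacle you anticipate is largely illusory. The paper does not analyse the whole adjoint map $\mathrm{ad}:H_p(n_1)\to\mathrm{Hom}(V,\mathfrak f_{n_1,p+1}^{p+1})$ or invoke Hall--Lyndon combinatorics; it simply fixes a single generator $x_1$ and observes that $[x_1,-]:i(\mathfrak n)\cap\mathfrak f_{n_1,p+1}^{p}\to\mathfrak f_{n_1,p+1}^{p+1}$ is injective. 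Since $i(\mathfrak n)\cap\mathfrak f_{n_1,p+1}^{p}\subseteq H_p(n_1)$, this follows from the fact that the centraliser of $x_1$ in the free Lie algebra is $\langle x_1\rangle$, so $[x_1,v]=0$ with $v\in H_p(n_1)$ forces $v=0$. That single injective map already exhibits $b_2(\mathfrak f_{n_1,p-1})-n_p$ linearly independent elements in the intersection, which is exactly the missing estimate. The point is that you need a rank bound for one specific $\mathrm{ad}_{x_1}$, not for the full image of $\mathrm{ad}$ in $\mathrm{Hom}(V,-)$.
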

\section{Preliminaries and notations} All vector spaces considered in this paper are finite dimensional over a fixed field $k$. If $V$ is a vector space, then we denote the dimension of $V$ by $|V|$. Now suppose $\mathfrak{g}$ is a Lie algebra over the field $k$. If $S$ is a subset of $\mathfrak{g}$, then the subalgebra/ideal generated by $S$ is by definition the smallest (via inclusion) subalgebra/ideal of $\mathfrak{g}$ containing $S$. It is not difficult to see that these spaces always exist and are unique. If we say that $\mathfrak{g}$ is generated by elements $X:=\{x_1,\ldots,x_{n}\}$ in $\mathfrak{g}$, we mean that the subalgebra generated by $X$ equals $\mathfrak{g}$. A generating set $X$ for $\mathfrak{g}$ is called minimal if no proper subset of $X$ generates $\mathfrak{g}$. \\
\indent The \emph{lower central series} of $\mathfrak{g}$ is the following decreasing sequence of ideals
\[ \mathfrak{g}=\mathfrak{g}^1  \supset [\mathfrak{g},\mathfrak{g}]=\mathfrak{g}^2 \supset  [\mathfrak{g},[\mathfrak{g},\mathfrak{g}]]=\mathfrak{g}^3 \supset \ldots \supset [\mathfrak{g},\mathfrak{g}^{i}]=\mathfrak{g}^{i+1} \supset \ldots \ .\]
For $p \in \mathbb{N}_0$, we say $\mathfrak{g}$ is \emph{$p$-step nilpotent} if $\mathfrak{g}^{p}\neq 0$ and $\mathfrak{g}^{p+1}=0$. If this is the case, then it is clear that $\mathfrak{g}^{p}$ is contained in the center $Z(\mathfrak{g})$ of $\mathfrak{g}$. If $\mathfrak{g}$ is $p$-step nilpotent,
the \emph{type} of $\mathfrak{g}$ is the sequence of numbers $(n_1,\ldots,n_i,\ldots,n_p) $, where $n_i = |\mathfrak{g}^{i}/\mathfrak{g}^{i+1}|$. It follows that $n_1 =|\frac{\mathfrak{g}}{[\mathfrak{g},\mathfrak{g}]}|$, $n_{p}=| \mathfrak{g}^{p}|$ and $\sum_{i=1}^p n_i=|\mathfrak{g}|$. \\
\indent Let $\mathfrak{f}_{n}$ be the free Lie algebra over $k$ on generators $X=\{x_1,\ldots,x_{n}\}$. The free Lie algebra is graded. Indeed, we can write
$\mathfrak{f}_{n}= H_1(n) \oplus H_2(n) \oplus \ldots \oplus H_{p}(n) \oplus \ldots $, where $H_i(n)$ is the vector space spanned by the Lie monomials in $X$ of degree $i$, such that $[H_i(n),H_j(n)]\subset H_{i+j}(n)$. Explicit bases can be constructed for $H_i(n)$, for example the Hall basis (see \cite{Serre}). Moreover, it is known that
$|H_i(n)|= \frac{1}{i} \sum_{d|i}\mu(d)n^{\frac{i}{d}}$ where $\mu$ equals the M\"{o}bius function, which is defined as follows
\[ \mu(d)= \left\{\begin{array}{ccc}
             1 & \mbox{if} & \mbox{$d \in \mathbb{\mathbb{Z}}^{+}$, square-free, with an even number of prime factors} \\
             -1 & \mbox{if} & \mbox{$d \in \mathbb{\mathbb{Z}}^{+}$, square-free, with an odd number of prime factors} \\
             0 &  & \mbox{otherwise}.
           \end{array}\right.\]
The free $p$-step nilpotent Lie algebra $\mathfrak{f}_{n,p}$ on generators $X=\{x_1,\ldots,x_{n}\}$ is defined as $\mathfrak{f}_{n}/\mathfrak{f}_{n}^{p+1}$. The universal property of the free Lie algebra implies that any set map from $X$ to a $p$-step nilpotent Lie algebra can be uniquely extended to a Lie algebra homomorphism from $\mathfrak{f}_{n,p}$ to this given $p$-step nilpotent Lie algebra. The free $p$-step nilpotent Lie algebra is graded, as it inherits its grading from the free Lie algebra. By abusing notation a little, we can write
$\mathfrak{f}_{n,p}= H_1(n) \oplus H_2(n) \oplus \ldots \oplus H_{p}(n)$, with $[H_i(n),H_j(n)]=0$ if $i+j>p$. \\
An introduction to (nilpotent) Lie algebras can, for example, be found in \cite{Corwin}, \cite{Jacobson}, \cite{Serre} and \cite{GozeKhakimdjanov}. \\
\indent Let $M$ be a $\mathfrak{g}$-module. One can compute $\mathrm{H}^{\ast}(\mathfrak{g},M)$, i.e. the cohomology of $\mathfrak{g}$ with coefficients in $M$, using the Chevalley-Eilenberg complex of $\mathfrak{g}$
\[ 0 \rightarrow M \xrightarrow{d^0} \mathrm{Hom}_k(\mathfrak{g},M) \xrightarrow{d^1}  \mathrm{Hom}_k(\Lambda^{2}(\mathfrak{g}),M)\xrightarrow{d^2} \ldots \rightarrow \mathrm{Hom}_k(\Lambda^{i}(\mathfrak{g}),M) \xrightarrow{d^i}\ldots \]
where $\Lambda^{i}(\mathfrak{g})$ denotes the $i$-th exterior product of $\mathfrak{g}$. Here, $d^0(m)(x)$ equals $xm$ and for $i \geq 1$, the coboundary $d^i(\omega)$ of a $i$-cochain is the $(i+1)$-cochain
\begin{eqnarray*}
d^i(\omega)(x_1 \wedge \ldots \wedge x_{i+1}) &=&   \sum_{k=1}^{i+1}(-1)^{k+1}x_k\omega(x_1 \wedge \ldots \wedge \hat{x_k} \wedge \ldots \wedge x_{i+1})  + \\
& & \sum_{k < l}(-1)^{k+l} \omega( [x_k,x_l] \wedge x_1 \wedge \ldots \wedge \hat{x_k} \wedge \ldots \wedge \hat{x_l} \wedge \ldots \wedge x_{i+1}).
\end{eqnarray*}
By definition, we have $\mathrm{H}^{i}(\mathfrak{g},M)=\frac{\mathrm{Ker}\ d^i}{\mathrm{Im} \ d^{i-1}}$. An element $\omega \in  \mathrm{Hom}_k(\Lambda^{i}(\mathfrak{g}),M)$ is called an $i$-cocycle if $d^i(\omega)=0$, it is called a $i$-coboundary if $\omega=d^{i-1}(\mu)$ for some $\mu \in  \mathrm{Hom}_k(\Lambda^{i-1}(\mathfrak{g}),M)$.
If $\omega \in \mathrm{Hom}_k(\Lambda^{2}(\mathfrak{g}),k)$ is a $2$-cocycle of $\mathfrak{g}$, then we will write $\overline{\omega}$ for the corresponding class in $\mathrm{H}^2(\mathfrak{g},M)$. More generally, the cohomology of $\mathfrak{g}$ can also be computed using derived functors. Indeed, we have $\mathrm{H}^{\ast}(\mathfrak{g},M)=\mathrm{Ext}^{\ast}_{U(\mathfrak{g})}(k,M)$, where $U(\mathfrak{g})$ is the universal enveloping algebra of $\mathfrak{g}$ and $\mathrm{Ext}^n_{U(\mathfrak{g})}(k,M)$ is the $n$-th right derived functor of $\mathrm{Hom}_{U(\mathfrak{g})}(k,-)$, evaluated at $M$. For details on homological algebra and (co)homology of Lie algebras, we refer the reader to \cite{Knapp} and \cite{Weibel}. \\
\indent For $i \in \{1,\ldots,m=|\mathfrak{g}|\}$, the \emph{Betti number} $b_i(\mathfrak{g})$ of $\mathfrak{g}$ is by definition the dimension of $\mathrm{H}^i(\mathfrak{g},k)$. Here $k$ is viewed as a trivial $\mathfrak{g}$-module. One can verify that $b_0(\mathfrak{g})=1$, $b_1(\mathfrak{g})=|\frac{\mathfrak{g}}{[\mathfrak{g},\mathfrak{g}]}|$ and $0 \leq b_i(\mathfrak{g}) \leq \binom{n}{i}$ for all $i \in \{0,1,\ldots,m\}$. It is also known that, if $\mathfrak{g}$ is a nilpotent Lie algebra, \emph{Poincar\'{e} duality} holds: $b_i(\mathfrak{g})=b_{m-i}(\mathfrak{g})$ for all $i \in \{0,1,\ldots,n\}$. Several other inequalities for the Betti numbers of nilpotent Lie algebra are provided in the introduction. Finally, we remark that $b_2(\mathfrak{f}_{n,p})=|H_{p+1}(n)|$, which can be computed using the M\"{o}bius function. \\
\indent Given a short exact sequence of Lie algebras
\begin{equation} \label{eq: general short exact sequence}
0 \rightarrow \mathfrak{n} \rightarrow \mathfrak{h} \xrightarrow{\pi} \mathfrak{g} \rightarrow 0 \end{equation}
and an $\mathfrak{h}$-module $M$, one can consider the associated \emph{Hochschild-Serre spectral sequence} (see \cite{HochSerre},\cite{McClearly} and \cite{Weibel})
\[ \mathrm{E}_2^{p,q}= \mathrm{H}^p(\mathfrak{h},\mathrm{H}^q(\mathfrak{n},M)) \Rightarrow \mathrm{H}^{p+q}(\mathfrak{g},M). \]
This spectral sequence gives rise to an exact sequence of low degree terms
\[ 0 \rightarrow \mathrm{H}^1(\mathfrak{g},M^{\mathfrak{n}}) \rightarrow \mathrm{H}^1(\mathfrak{e},M) \rightarrow \mathrm{H}^1(\mathfrak{n},M)^{\mathfrak{g}} \xrightarrow{d} \mathrm{H}^2(\mathfrak{g},M^{\mathfrak{n}}) \rightarrow  \mathrm{H}^2(\mathfrak{e},M) \]
for every $\mathfrak{h}$-module $M$. Here, $d$ is the $d_2^{0,1}$-differential from the spectral sequence. The other maps are the well-known restriction and inflation maps. Note that an elementary working knowledge of spectral sequences will be assumed throughout this text. \\
\indent
Let $\mathfrak{g}$ be a Lie algebra and $M$ a $\mathfrak{g}$-module. An extension of $\mathfrak{g}$ by $M$ is a short exact sequence $0 \rightarrow M \rightarrow \mathfrak{e} \xrightarrow{\pi} \mathfrak{g} \rightarrow 0$ of Lie algebras, where $M$ is viewed as an abelian Lie algebra such that the $\mathfrak{g}$-module structure induced by the extension coincides with the original $\mathfrak{g}$-module structure on $M$ (i.e. for all $m \in M$: $[y,m]=xm$ for all $y \in \mathfrak{e}$ such that $\pi(y)=x$). We say such an extension is \emph{split}, if there exists a Lie algebra homomorphism $\pi: \mathfrak{g} \rightarrow \mathfrak{e}$ such that $\pi \circ s=\mathrm{Id}$. Two extensions of $\mathfrak{g}$ by $M$, $0 \rightarrow M \rightarrow \mathfrak{e}_i \rightarrow \mathfrak{g} \rightarrow 0$ for $i \in \{1,2\}$, are called equivalent if there exists a Lie algebra homomorphism $\theta: \mathfrak{e}_1 \rightarrow \mathfrak{e}_2$ such that
 \[ \xymatrix{  0 \ar[r] & M \ar[r] \ar[d]^{\mathrm{Id}} & \mathfrak{e}_1 \ar[r] \ar[d]^{\theta} & \mathfrak{g} \ar[d]^{\mathrm{Id}} \ar[r] & 0  \\
  0 \ar[r] & M \ar[r]^{i}  & \mathfrak{e}_2 \ar[r]^{\pi}  & \mathfrak{g} \ar[r] & 0} \]
commutes. This induces an equivalence relation on the set of all extensions of $\mathfrak{g}$ by $M$. One can easily check that all split extensions of $\mathfrak{g}$ by $M$ are equivalent. Denote the set of equivalence classes by $\mathrm{Ext}(\mathfrak{g},M)$. The following theorem is well-known.
\begin{theorem} There is a bijective correspondence between  $\mathrm{H}^2(\mathfrak{g},M)$ and $\mathrm{Ext}(\mathfrak{g},M)$, such that the class of split extensions corresponds to the zero element in $\mathrm{H}^2(\mathfrak{g},M)$.
\end{theorem}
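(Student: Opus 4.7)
The plan is to construct explicit maps in both directions between $\mathrm{Ext}(\mathfrak{g},M)$ and $\mathrm{H}^2(\mathfrak{g},M)$, verify they descend to equivalence and cohomology classes respectively, and check that they are mutually inverse and compatible with the distinguished classes.

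In the direction from extensions to cohomology, given $0 \to M \to \mathfrak{e} \xrightarrow{\pi} \mathfrak{g} \to 0$, I would choose a $k$-linear section $s: \mathfrak{g} \to \mathfrak{e}$ of $\pi$ and define $\omega_s(x \wedge y) = [s(x), s(y)] - s([x,y])$. Since $\pi$ is a Lie algebra homomorphism, $\omega_s$ takes values in $\ker \pi \cong M$, so $\omega_s \in \mathrm{Hom}_k(\Lambda^2(\mathfrak{g}), M)$. The Jacobi identity in $\mathfrak{e}$, expanded using the compatibility $[y,m] = \pi(y)m$ for $y \in \mathfrak{e}$ and $m \in M$, translates exactly into the $2$-cocycle identity $d^2 \omega_s = 0$; this is the central routine computation. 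For a different linear section $s' = s + \mu$ with $\mu: \mathfrak{g} \to M$, expanding $[s'(x), s'(y)] - s'([x,y])$ and using that $M$ is abelian gives $\omega_{s'} - \omega_s = d^1 \mu$, so the class $\overline{\omega_s} \in \mathrm{H}^2(\mathfrak{g}, M)$ is independent of the section, and equivalent extensions clearly yield cohomologous cocycles.

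In the opposite direction, given a $2$-cocycle $\omega$, I would endow the vector space $M \oplus \mathfrak{g}$ with the bracket $[(m_1, x_1), (m_2, x_2)] = (x_1 m_2 - x_2 m_1 + \omega(x_1 \wedge x_2), [x_1, x_2])$. The cocycle condition on $\omega$ is precisely what is needed for the Jacobi identity to hold on this bracket, producing an extension $\mathfrak{e}_\omega$ with obvious linear section $x \mapsto (0, x)$. If $\omega' - \omega = d^1 \mu$, then $(m, x) \mapsto (m + \mu(x), x)$ is a Lie algebra isomorphism $\mathfrak{e}_\omega \to \mathfrak{e}_{\omega'}$ over the identity on $M$ and $\mathfrak{g}$, so the equivalence class of $\mathfrak{e}_\omega$ depends only on $\overline{\omega}$.

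To conclude, I would verify that these two constructions are mutually inverse: starting from $\omega$, the cocycle obtained from the canonical section of $\mathfrak{e}_\omega$ is $\omega$ itself, and starting from an extension $\mathfrak{e}$ with section $s$, the map $(m, x) \mapsto s(x) + m$ is a Lie algebra isomorphism $\mathfrak{e}_{\omega_s} \to \mathfrak{e}$ defining an equivalence. For the final assertion, if $\omega = 0$ then $\mathfrak{e}_0$ is the semidirect product, manifestly split by $x \mapsto (0, x)$; conversely, if the extension splits via a Lie algebra section $s$, then by definition $\omega_s \equiv 0$, so the zero cohomology class corresponds to the equivalence class of split extensions. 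The main obstacle throughout is purely bookkeeping: matching the signs in the Chevalley--Eilenberg coboundary formula with those arising from the Jacobi identity and from the $\mathfrak{g}$-action on $M$, but no substantive difficulty is encountered.
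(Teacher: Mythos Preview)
Your proposal is correct and entirely standard. In one direction it agrees with the paper's description: from a $2$-cocycle $\omega$ one builds the Lie algebra $M\oplus\mathfrak{g}$ with the twisted bracket. In the other direction, however, the paper takes a different route. Rather than choosing a linear section $s$ and forming $\omega_s(x\wedge y)=[s(x),s(y)]-s([x,y])$, the paper defines the inverse map $\Psi$ via the Hochschild--Serre spectral sequence: to an extension $0\to M\to\mathfrak{e}\to\mathfrak{g}\to 0$ it associates $d(\mathrm{Id}_M)\in\mathrm{H}^2(\mathfrak{g},M)$, where $d=d_2^{0,1}$ is the transgression in the low-degree exact sequence. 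The two descriptions of $\Psi$ are of course the same map (the section construction is exactly how one computes the transgression explicitly, and indeed the paper invokes your formula later in the proof of Proposition~\ref{prop: vergne}), but the emphasis differs. Your approach is more elementary and self-contained, requiring no spectral-sequence machinery. The paper's formulation, on the other hand, pays off later: the naturality of the low-degree exact sequence under pullback of extensions and change of coefficients is used directly in the proofs of Theorem~\ref{th: filtration} and Corollary~\ref{cor: main cor}, so casting $\Psi$ in spectral-sequence terms from the outset makes those arguments cleaner.
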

We will now describe how this correspondence can be obtained. Given a $2$-cocycle $\omega \in \mathrm{Hom}_k(\Lambda^2(\mathfrak{g}),M)$, define the Lie algebra $\mathfrak{h}:=M\oplus \mathfrak{g}$ with Lie bracket given by $[(m,x),(n,y)]:=(xn-ym + \omega(x \wedge y),[x,y])$ (the fact the $\omega$ is a $2$-cocycle ensures that this is in fact a Lie bracket). Then $0 \rightarrow M \xrightarrow{i} \mathfrak{h} \xrightarrow{p} \mathfrak{g} \rightarrow 0$ is an extension of $\mathfrak{g}$ by $M$, where $i$ and $p$ are the obvious inclusion and projection maps. Write $\alpha$ for the equivalence class represented by this extension. Then one can check that the map $\Theta: \mathrm{H}^2(\mathfrak{g},M) \rightarrow \mathrm{Ext}(\mathfrak{g},M): \overline{\omega} \mapsto \alpha$ is well defined. \\
Now let  $0 \rightarrow M \rightarrow \mathfrak{e} \xrightarrow{\pi} \mathfrak{g} \rightarrow 0$ be an extension of $\mathfrak{g}$ by $M$ and consider its associated exact sequence of low degree terms, with coefficients in $M$ viewed as an $\mathfrak{e}$-module via $\pi$
\[ 0 \rightarrow \mathrm{H}^1(\mathfrak{g},M) \rightarrow \mathrm{H}^1(\mathfrak{e},M) \rightarrow \mathrm{H}^1(M,M)^{\mathfrak{g}} \xrightarrow{d} \mathrm{H}^2(\mathfrak{g},M) \rightarrow  \mathrm{H}^2(\mathfrak{e},M). \]
By duality, we know that $\mathrm{H}^1(M,M)^{\mathfrak{g}}\cong\mathrm{Hom}_{\mathfrak{g}}(M,M)$. Denote the element in $\mathrm{H}^1(M,M)^{\mathfrak{g}}$ that corresponds to the identity map in $\mathrm{Hom}_{\mathfrak{g}}(M,M)$, under this isomorphism, by $\mathrm{Id}_M$. One can check that the map $\Psi: \mathrm{Ext}(\mathfrak{g},M) \rightarrow \mathrm{H}^2(\mathfrak{g},M)$ that maps the equivalence class of the extension $0 \rightarrow M \rightarrow \mathfrak{e} \xrightarrow{\pi} \mathfrak{g} \rightarrow 0$ to $d(\mathrm{Id}_M)$ is well-defined. It turns out that the maps $\Psi$ and $\Theta$ are mutual inverses of each other, so we obtain the desired bijection.
For details we refer the reader to \cite{Weibel}. \\
\indent
Finally, let us describe an important construction that will be used in the proof of our main theorem.
Consider a short exact sequence of Lie algebras $(\ref{eq: general short exact sequence})$ and a Lie algebra homomorphism $\mathfrak{p} \xrightarrow{p} \mathfrak{g}$.
Now define $\mathfrak{e}$ as the following subalgebra of the product Lie algebra $\mathfrak{h} \oplus \mathfrak{p}$
\[ \mathfrak{e}:=\{ (x,y) \in \mathfrak{h} \oplus \mathfrak{p} \ | \ \pi(x)=p(y) \}.\]
One easily sees that $(n,0) \in \mathfrak{e}$ for all $n \in \mathfrak{n}$, so we have an inclusion $\mathfrak{n} \rightarrow \mathfrak{e}$.
Now let $\psi_1$ be the restriction to $\mathfrak{e}$ of the projection $\mathfrak{h} \oplus \mathfrak{p} \rightarrow \mathfrak{h}$ and let $\psi_2$ be the restriction to $\mathfrak{e}$ of the projection $\mathfrak{h} \oplus \mathfrak{p} \rightarrow \mathfrak{p}$. The reader can check that
$\mathfrak{e}$ fits into a short exact sequence $0 \rightarrow \mathfrak{n} \rightarrow \mathfrak{e} \xrightarrow{\psi_2} \mathfrak{p} \rightarrow 0$, and a commutative diagram
\[ \xymatrix{  0 \ar[r] & \mathfrak{n} \ar[r] \ar[d]^{\mathrm{Id}} & \mathfrak{e} \ar[r]^{\psi_2} \ar[d]^{\psi_1} & \mathfrak{p} \ar[d]^{p} \ar[r] & 0  \\
  0 \ar[r] & \mathfrak{n} \ar[r]  & \mathfrak{h} \ar[r]^{\pi}  & \mathfrak{g} \ar[r] & 0.} \]
This construction will be referred to as the \emph{the pullback construction}. Indeed, one can verify that in the category of Lie algebras $(\mathfrak{e},\psi_1,\psi_2)$ is the pullback of $(\mathfrak{h} \xrightarrow{\pi} \mathfrak{g},\mathfrak{p} \xrightarrow{p} \mathfrak{g})$. Note that it follows from the construction that, if $\mathfrak{h}$ and $\mathfrak{p}$ are nilpotent of class at most $r$, $\mathfrak{e}$ is also nilpotent of class at most $r$.
\section{A filtration of the second cohomology space}
Throughout this section, let $\mathfrak{g}$ be a finite dimensional $p$-step nilpotent Lie algebra and $M$ a finite dimensional $\mathfrak{g}$-module. \\
\indent It is clear that, given any abelian extension of $\mathfrak{g}$ by $M$
\[ 0 \rightarrow M \rightarrow \mathfrak{e} \xrightarrow{\pi} \mathfrak{g} \rightarrow 0, \]
the Lie algebra $\mathfrak{e}$ is not necessarily nilpotent. However, if we consider extensions of $\mathfrak{g}$ with a particular type of $\mathfrak{g}$-module, then these extensions will yield nilpotent Lie algebras. Let us define this particular type of $\mathfrak{g}$-module.
\begin{definition} \rm A $\mathfrak{g}$-module $M$ is called $q$-step nilpotent if $q \geq 1$ is the smallest integer for which $M$ admits a strictly decreasing sequence of submodules
\[ 0=M^q \subsetneq M^{q-1} \subsetneq \ldots \subsetneq M^{j+1} \subsetneq M^{j} \subsetneq \ldots \subsetneq M^1 \subsetneq M^0 = M, \]
such that $\mathfrak{g}^i  M^j \subset M^{i+j}$, for all $i,j$ (we set $M^r=0$ for all $r\geq q$).
\end{definition}
The key property, for our purposes, of nilpotent $\mathfrak{g}$-modules is described in the following proposition.
\begin{proposition}\label{prop: bound nilp class} Let $\mathfrak{g}$ be a $p$-step nilpotent Lie algebra and $M$ a $\mathfrak{g}$-module. Consider an abelian extension of $\mathfrak{g}$ by $M$
\[ 0 \rightarrow M \rightarrow \mathfrak{e} \xrightarrow{\pi} \mathfrak{g} \rightarrow 0. \]
If $M$ is a $q$-step nilpotent $\mathfrak{g}$-module then $\mathfrak{e}$ is an $r$-step nilpotent Lie algebra with $p \leq r \leq p+q$. Moreover, if the extension is split, then $\mathfrak{e}$ is $r$-step nilpotent, with $r=\max\{p,q\}$. Conversely, if $\mathfrak{e}$ is an $r$-step nilpotent Lie algebra, then $M$ is a $q$-step nilpotent $\mathfrak{g}$-module with $r-p \leq q \leq r$.
\end{proposition}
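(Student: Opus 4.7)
The whole proof hinges on the module compatibility identity $[y,m]=\pi(y)\,m$ for $y\in\mathfrak{e}$, $m\in M$, combined with the surjectivity $\pi(\mathfrak{e}^i)=\mathfrak{g}^i$ valid for all $i$. My plan is to run short inductions on the lower central series using these two inputs, and to translate between the filtration $M^\bullet$ (this section) and the lower central series of $\mathfrak{e}$.

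For the upper bound $r\le p+q$, fix a filtration $0=M^q\subsetneq\ldots\subsetneq M^0=M$ with $\mathfrak{g}^iM^j\subseteq M^{i+j}$ realising the $q$-step nilpotency hypothesis. I would establish by induction on $j\ge 0$ that $\mathfrak{e}^{p+1+j}\subseteq M^j$. The base case $j=0$ follows from $\pi(\mathfrak{e}^{p+1})=\mathfrak{g}^{p+1}=0$, so $\mathfrak{e}^{p+1}\subseteq\ker\pi=M$. For the induction step, $\mathfrak{e}^{p+2+j}=[\mathfrak{e},\mathfrak{e}^{p+1+j}]\subseteq[\mathfrak{e},M^j]$ and the identity gives $[y,m]=\pi(y)\,m\in\mathfrak{g}\cdot M^j\subseteq M^{j+1}$; setting $j=q$ yields $\mathfrak{e}^{p+1+q}=0$. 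The lower bound $r\ge p$ is immediate: $\pi(\mathfrak{e}^p)=\mathfrak{g}^p\neq 0$, whence $\mathfrak{e}^p\neq 0$. For the split assertion, write $\mathfrak{e}=M\rtimes\mathfrak{g}$ and prove by induction on $n\ge 1$ that $\mathfrak{e}^n\subseteq M^{n-1}+\mathfrak{g}^n$, expanding $[\mathfrak{e},\mathfrak{e}^n]\subseteq[M+\mathfrak{g},M^{n-1}+\mathfrak{g}^n]$ into its four summands: $[M,M^{n-1}]=0$, $[M,\mathfrak{g}^n]\subseteq M^n$, $[\mathfrak{g},M^{n-1}]\subseteq M^n$, and $[\mathfrak{g},\mathfrak{g}^n]=\mathfrak{g}^{n+1}$. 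Taking $n=\max\{p,q\}$ kills $\mathfrak{e}^{n+1}$, and the matching lower bound $r\ge\max\{p,q\}$ follows because for suitable $x_1,\ldots,x_{q-1}\in\mathfrak{g}$ and $m\in M$ the iterated bracket $[x_{q-1},[\ldots,[x_1,m]\ldots]]=x_{q-1}\cdots x_1m$ is forced to be nonzero by the minimality of $q$.

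For the converse, suppose $\mathfrak{e}$ is $r$-step nilpotent and set $M^j:=M\cap\mathfrak{e}^{j+1}$; this is a decreasing chain of $\mathfrak{g}$-submodules with $M^0=M$ and $M^r=0$. Given $x\in\mathfrak{g}^i$ and $m\in M^j$, picking any lift $y\in\mathfrak{e}^i$ of $x$ gives $x\,m=[y,m]\in[\mathfrak{e}^i,\mathfrak{e}^{j+1}]\cap M\subseteq\mathfrak{e}^{i+j+1}\cap M=M^{i+j}$, so $\mathfrak{g}^iM^j\subseteq M^{i+j}$. This witnesses $q\le r$, and $q\ge r-p$ is precisely the contrapositive of the upper bound already proved. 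The main obstacle is purely notational: keeping the two conventions ($M^\bullet$ in this section versus $M_\bullet$ in the Introduction) straight throughout, and choosing the right concrete realisation of $M^\bullet$ in each direction. Beyond this bookkeeping, every step is a direct consequence of the single identity $[y,m]=\pi(y)\,m$.
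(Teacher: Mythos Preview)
Your proposal is correct and complete. The paper itself does not give a proof of this proposition---it simply states ``The proof is a relatively easy exercise, left to the reader''---so your argument is filling in what the author omits, and it does so cleanly along the lines one would expect: exploiting $\pi(\mathfrak{e}^i)=\mathfrak{g}^i$ together with the compatibility $[y,m]=\pi(y)m$.

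One small point worth making explicit in the converse direction: the filtration $M^j:=M\cap\mathfrak{e}^{j+1}$ need not be \emph{strictly} decreasing, whereas Definition~3.1 demands strictness. The cleanest fix is to bypass strictness altogether and observe that your inclusion $\mathfrak{g}\cdot M^j\subseteq M^{j+1}$ forces $x_r\cdots x_1 m=0$ for all $x_i\in\mathfrak{g}$ and $m\in M$, hence $M_r=M$ in the notation of the Introduction, giving $q\le r$ via the equivalence between the two descriptions of nilpotency degree that the paper records just after Definition~3.4. You already flag this notational juggling as the only real obstacle, so this is more a remark than a correction.
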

\begin{proof}
The proof is a relatively easy exercise, left to the reader. \end{proof}
This proposition suggests a filtration of $\mathrm{Ext}(\mathfrak{g},M)$.
\begin{definition} \label{def: ext filt}\rm
 Let $\mathfrak{g}$ be a $p$-step nilpotent Lie algebra and $M$ a $q$-step nilpotent $\mathfrak{g}$-module. Take $r \in \mathbb{Z}$ such that $p \leq r \leq p+q$. With $F_r\mathrm{Ext}(\mathfrak{g},M)$ we mean the subset of $\mathrm{Ext}(\mathfrak{g},M)$ containing the equivalence classes of extensions
 \[ 0 \rightarrow M \rightarrow \mathfrak{e} \xrightarrow{\pi} \mathfrak{g} \rightarrow 0 \]
 such that $\mathfrak{e}$ is nilpotent of class at most $r$.
This defines a filtration
\[  F_{\max\{p,q\}}\mathrm{Ext}(\mathfrak{g},M) \subseteq  F_{\max\{p,q\}+1}\mathrm{Ext}(\mathfrak{g},M) \subseteq \ldots \subseteq F_{p+q-1}\mathrm{Ext}(\mathfrak{g},M) \subseteq F_{p+q}\mathrm{Ext}(\mathfrak{g},M)=\mathrm{Ext}(\mathfrak{g},M). \]
\end{definition}
If $M$ is a $q$-step nilpotent $\mathfrak{g}$-module, there exists a well-known filtration of $\mathrm{H}^2(\mathfrak{g},M)$ (see \cite{GozeKhakimdjanov}, \cite{Vergne} but mind the different indexing). To construct this filtration, we need a special filtration of $M$: $M_j:=0$ for all integer $j\leq 0$ and
\[ M_{i} := \{ m \in M \ | \ xm \in M_{i-1} \ \mbox{for all} \ x \in \mathfrak{g}\}, \]
for all integers $i\geq 1$.
One checks that $M_1=M^{\mathfrak{g}}:=\{ m \in M \ | \ xm=0, \ \forall x \in \mathfrak{g}\}$ and $M_{q}=M$. Also, if we set $M^i:=M_{q-i}$ for all $i \in \{0,\ldots,q\}$, then
\[ 0=M^q \subsetneq M^{q-1} \subsetneq \ldots \subsetneq M^{j+1} \subsetneq M^{j} \subsetneq \ldots \subsetneq M^1 \subsetneq M^0 = M \]
is a strictly decreasing sequence of $\mathfrak{g}$-submodules such that $\mathfrak{g}^i  M^j \subset M^{i+j}$ for all $i,j$.
\begin{definition} \label{def: filt H^2}\rm For each $n,r \in \mathbb{N}$, we define $F_r\mathrm{Hom}_k(\Lambda^n(\mathfrak{g}),M)$ to be
\[  \{ f \in \mathrm{Hom}_k(\Lambda^n(\mathfrak{g}),M) \ | \ f(x_1 \wedge \ldots \wedge x_n) \in M_{r-i_1-\ldots - i_n+1} \ \mbox{if} \  x_1 \wedge \ldots \wedge x_n \in \mathfrak{g}^{i_1}\wedge \ldots \wedge \mathfrak{g}^{i_n}\}. \]
One can verify that $F_{i-1}\mathrm{Hom}_k(\Lambda^n(\mathfrak{g}),M) \subset F_i\mathrm{Hom}_k(\Lambda^n(\mathfrak{g}),M)$ for all $i$ and $n$. Also, the differentials of the Chevalley-Eilenberg complex behave well in respect to this filtration, in the sense that $F_i\mathrm{Hom}_k(\Lambda^{\ast}(\mathfrak{g}),M)$ forms a sub-complex of $\mathrm{Hom}_k(\Lambda^{\ast}(\mathfrak{g}),M)$ for each $i$. Hence, by considering the inclusion maps on the cohomology level, we obtain a filtration on cohomology. In particular, we obtain a filtration
\[ \ldots \subset F_{i-1}\mathrm{H}^2(\mathfrak{g},M) \subset F_i\mathrm{H}^2(\mathfrak{g},M) \subset \ldots \subset \mathrm{H}^2(\mathfrak{g},M). \]
\end{definition}
The next result, due to Vergne (see, \cite{Vergne}) shows that, under the bijective correspondence between $\mathrm{H}^2(\mathfrak{g},M)$ and $\mathrm{Ext}(\mathfrak{g},M)$, the spaces $F_r\mathrm{Ext}(\mathfrak{g},M)$ from Definition \ref{def: ext filt} correspond to the spaces $F_r\mathrm{H}^2(\mathfrak{g},M)$ from Definition \ref{def: filt H^2}.
\begin{proposition} \label{prop: vergne}(Vergne, \cite{Vergne}) Let $\mathfrak{g}$ be a $p$-step nilpotent Lie algebra, $M$ a $q$-step nilpotent $\mathfrak{g}$-module and
consider the extension of $\mathfrak{g}$ by $M$
\[ 0 \rightarrow M \rightarrow \mathfrak{e} \rightarrow \mathfrak{g} \rightarrow 0. \]
Let $\alpha \in \mathrm{H}^2(\mathfrak{g},M)$ be the element  corresponding to the equivalence class of this extension. Take an integer $r$ such that $\max\{p,q\} \leq r \leq p+q$. Then $\mathfrak{e}$ is nilpotent of class at most $r$ if and only if $\alpha \in F_{r}\mathrm{H}^2(\mathfrak{g},M)$.
\end{proposition}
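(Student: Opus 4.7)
My approach is to prove both implications directly using the explicit cocycle--extension correspondence described in the preliminaries. The backbone of the argument is the single filtration identity $\mathfrak{g}^i \cdot M_j \subseteq M_{j-i}$, which is immediate from the definition of the $M_j$, and which lets the lower central series of $\mathfrak{e}$ interact cleanly with the filtration of $M$.

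For the ``if'' direction, since the nilpotency class of $\mathfrak{e}$ depends only on the equivalence class of the extension, I may choose a representative $\omega \in F_r\mathrm{Hom}_k(\Lambda^2\mathfrak{g},M)$ of $\alpha$ and form $\mathfrak{e} = M \oplus \mathfrak{g}$ with bracket $[(m,x),(n,y)] = (xn-ym+\omega(x\wedge y),[x,y])$ as in the preliminaries. I would then prove by induction on $k \geq 1$ that $\mathfrak{e}^k \subseteq M_{r-k+1} \oplus \mathfrak{g}^k$. The base case $k=1$ uses $M=M_q\subseteq M_r$. For the inductive step, a generator $[(m,x),(n,y)]$ of $[\mathfrak{e},\mathfrak{e}^k]$ with $x\in\mathfrak{g}$, $y\in\mathfrak{g}^k$, $n\in M_{r-k+1}$ has $M$-component $xn - ym + \omega(x\wedge y)$, and all three summands lie in $M_{r-k}$ by the filtration identity together with $\omega\in F_r$. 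Setting $k=r+1$ and using $\mathfrak{g}^{r+1}=0$ (since $r\geq p$) together with $M_0=0$ yields $\mathfrak{e}^{r+1}=0$.

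For the ``only if'' direction I would first establish two supporting facts. Firstly, $\pi(\mathfrak{e}^i) = \mathfrak{g}^i$ for every $i$, by lifting each factor of an iterated bracket from $\mathfrak{g}$ to $\mathfrak{e}$. Secondly, if $\mathfrak{e}$ is $r$-step nilpotent then $M \cap \mathfrak{e}^k \subseteq M_{r-k+1}$: given $m\in M\cap \mathfrak{e}^k$ and $x_1,\dots,x_{r-k+1}\in\mathfrak{g}$, the iterated action $x_1\cdots x_{r-k+1}\cdot m$ equals an iterated bracket $[y_1,[\dots,[y_{r-k+1},m]\dots]]$ sitting in $\mathfrak{e}^{r+1}=0$, for any lifts $y_i$ of $x_i$, using the compatibility of the $\mathfrak{g}$-action on $M$ with the restricted $\mathfrak{e}$-bracket. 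Next I would decompose $\mathfrak{g} = V_1 \oplus \cdots \oplus V_p$ with $\mathfrak{g}^i = V_i \oplus \mathfrak{g}^{i+1}$, use the first fact to linearly lift each $V_i$ into $\mathfrak{e}^i$, and assemble a section $s:\mathfrak{g}\to\mathfrak{e}$ that satisfies $s(\mathfrak{g}^i)\subseteq\mathfrak{e}^i$ for every $i$. The associated $2$-cocycle $\omega_s(x,y) = [s(x),s(y)] - s([x,y])$ represents $\alpha$ and lands in $M\cap\mathfrak{e}^{i+j}$ whenever $x\in\mathfrak{g}^i$, $y\in\mathfrak{g}^j$, which by the second fact is contained in $M_{r-i-j+1}$; this is exactly the condition $\omega_s\in F_r\mathrm{Hom}_k(\Lambda^2\mathfrak{g},M)$.

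The most delicate point is the inductive bookkeeping in the forward direction: one must prove the refined statement $\mathfrak{e}^k \subseteq M_{r-k+1}\oplus\mathfrak{g}^k$ rather than attempting to show $\mathfrak{e}^{r+1}=0$ directly, and one has to exploit the full bigraded bound $\omega(\mathfrak{g}^i\wedge\mathfrak{g}^j)\subseteq M_{r-i-j+1}$ rather than merely the case $i=j=1$. In the reverse direction the main subtlety is that the section $s$ must be compatible with the entire lower central series of $\mathfrak{g}$ at once, which is what forces the graded decomposition $\mathfrak{g}=V_1\oplus\cdots\oplus V_p$ instead of an arbitrary linear section.
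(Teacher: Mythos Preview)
Your proposal is correct and follows essentially the same route as the paper: the forward direction proves the inductive containment $\mathfrak{e}^k\subseteq M_{r-k+1}\oplus\mathfrak{g}^k$ (the paper carries along the redundant extra summand $M_{q-k+1}\subseteq M_{r-k+1}$), and the reverse direction builds a section $s$ with $s(\mathfrak{g}^i)\subseteq\mathfrak{e}^i$ and then uses $M\cap\mathfrak{e}^{i+j}\subseteq M_{r-i-j+1}$, exactly as the paper does. Your construction of $s$ via graded complements $V_1\oplus\cdots\oplus V_p$ is more explicit than the paper, which simply asserts such an $s$ exists; one small inaccuracy is your closing remark that the ``full bigraded bound'' on $\omega$ is essential in the forward direction---your induction as written only ever brackets with $\mathfrak{e}^1$, so only the case $i=1$ of $\omega(\mathfrak{g}^i\wedge\mathfrak{g}^j)\subseteq M_{r-i-j+1}$ is actually invoked.
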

\begin{proof} Suppose $\alpha \in F_{r}\mathrm{H}^2(\mathfrak{g},M)$. It follows that $\alpha=\overline{f}$, where $f \in F_r\mathrm{Hom}_k(\Lambda^2(\mathfrak{g}),M)$. We may assume that $\mathfrak{e}=M \oplus \mathfrak{g}$ with Lie bracket $[(m,x),(n,y)]:=(xn-ym+f(x\wedge y),[x,y])$, and need to prove that $\mathfrak{e}^{r+1}=0$. Using induction and  the fact that $f \in F_r\mathrm{Hom}_k(\Lambda^2(\mathfrak{g}),M)$, one shows that $\mathfrak{e}^i \subset (M_{q-i+1}+M_{r-i+1},\mathfrak{g}^i)$ for all $i\geq 1$. Hence $\mathfrak{e}^{r+1} \subset (M_{q-r}+M_{0},\mathfrak{g}^{r+1})$. Since $\max\{p,q\} \leq r$, we have $M_{q-r}=0$ and $\mathfrak{g}^{r+1}=0$. We conclude that $\mathfrak{e}^{r+1}=0$. \\
Conversely, suppose that $\mathfrak{e}$ is nilpotent of class at most $r$. We will construct a cocycle $f \in F_r\mathrm{Hom}_k(\Lambda^2(\mathfrak{g}),M)$ such that $\overline{f}=\alpha$. Choose a linear map $\mathfrak{s} \rightarrow \mathfrak{e}$ such that $\pi \circ s=\mathrm{Id}$ and $s(\mathfrak{g}^i)\subset \mathfrak{e}^i$ for all $i$. It is well-known that $f$, with $f(x \wedge y):=[s(x),s(y)]-s([x,y])$ for all $x,y \in \mathfrak{g}$, is a cocycle such that $\overline{f}=\alpha$. It remains to show  that $f \in F_r\mathrm{Hom}_k(\Lambda^2(\mathfrak{g}),M)$. Take $x \in \mathfrak{g}^i$ and $y \in \mathfrak{g}^j$ for some $i,j$. Using the fact that $s(\mathfrak{g}^i)\subset \mathfrak{e}^i$ for all $i$, one easily checks that $f(x \wedge y) \subset \mathfrak{e}^{i+j}$. If $i+j\geq r+1$, then it follows from $\mathfrak{e}^{r+1}=0$ that  $f(x \wedge y) \in  M_{r-i-j+1}$. Suppose that $i+j \leq r$. Because $\mathfrak{e}^{r+1}=0$, it follows that $\mathfrak{e}^r \cap M \subset M^{\mathfrak{g}}=M_1$. Because $M_2$ consists exactly of those $m \in M$ such that $xm \in M_1$ for all $x \in \mathfrak{g}$, it follows that $\mathfrak{e}^{r-1} \cap M \subset M_2$. Proceeding in this manner, we find that $\mathfrak{e}^{i+j} \cap M \subset M_{r-i-j+1}$. Hence $f(x \wedge y) \in  M_{r-i-j+1}$, which shows that $f \in F_r\mathrm{Hom}_k(\Lambda^2(\mathfrak{g}),M)$.
\end{proof}
To summarize, we have a filtration
\[ F_{\max\{p,q\}}\mathrm{H}^2(\mathfrak{g},M) \subseteq F_{\max\{p,q\}+1}\mathrm{H}^2(\mathfrak{g},M) \subseteq \ldots \subseteq F_{p+q-1}\mathrm{H}^2(\mathfrak{g},M) \subseteq F_{p+q}\mathrm{H}^2(\mathfrak{g},M)=\mathrm{H}^2(\mathfrak{g},M) \]
defined in terms of conditions on the cocycle level, such that the elements in $F_r\mathrm{Ext}(\mathfrak{g},M)$ correspond to equivalence classes of extensions of $\mathfrak{g}$ by $M$, that are nilpotent of class at most $r$. \\ \\
\indent We would like to give a different characterization of the spaces $F_r\mathrm{H}^2(\mathfrak{g},M)$.
To do this, we first need the notion of a \emph{free nilpotent extension}.
\begin{definition} \rm Let $\mathfrak{g}$ be a $p$-step nilpotent Lie algebra with $b_1(\mathfrak{g})=n$. A \emph{free $p$-step nilpotent extension} of $\mathfrak{g}$ is a short exact sequence of Lie algebras
\[ 0 \rightarrow \mathfrak{n} \rightarrow \mathfrak{f}_{n,p} \xrightarrow{\pi} \mathfrak{g} \rightarrow 0 \]
such that $\mathfrak{n} \subset [\mathfrak{f}_{n,p},\mathfrak{f}_{n,p}]$, where $\mathfrak{f}_{n,p}$ is the free $p$-step nilpotent Lie algebra on $n$ generators. We say $\mathfrak{g}$ is of \emph{depth} $d$ if $d$ is the largest integer such that $\mathfrak{n}\subseteq \mathfrak{f}_{n,p}^{d}$. It follows that the induced map $\frac{\mathfrak{f}_{n,p} }{[\mathfrak{f}_{n,p},\mathfrak{f}_{n,p}]} \rightarrow \frac{\mathfrak{g}}{[\mathfrak{g},\mathfrak{g}]}$ is an isomorphism and that $d \in \{2,\ldots,p\}$. \\
For $r \geq p$, A \emph{free $r$-step nilpotent extension} of $\mathfrak{g}$ is a short exact sequence of Lie algebras
\[ 0 \rightarrow \mathfrak{n'} \rightarrow \mathfrak{f}_{n,r} \xrightarrow{\pi'} \mathfrak{g} \rightarrow 0 \]
where $\mathfrak{f}_{n,r}$ is the free $r$-step nilpotent Lie algebra on $n$ generators and $\pi'$ is the composition of the canonical projection $\mathfrak{f}_{n,r} \rightarrow  \mathfrak{f}_{n,p}$ with a morphism $\pi: \mathfrak{f}_{n,p} \xrightarrow{\pi} \mathfrak{g}$, coming from a free $p$-step nilpotent extension of $\mathfrak{g}$.
\end{definition}
\indent Note that every $p$-step nilpotent Lie algebra $\mathfrak{g}$ has a free $p$-step nilpotent extension. Indeed, choose elements $\{x_1,\ldots,x_{n}\}\subset \mathfrak{g}$ such that under the projection of $\mathfrak{g}$ onto $\frac{\mathfrak{g}}{[\mathfrak{g},\mathfrak{g}]}$, the elements  $\{x_1,\ldots,x_{n}\}$ are mapped to a basis $\{\overline{x_1},\ldots,\overline{x_{n}}\}$ for $\frac{\mathfrak{g}}{[\mathfrak{g},\mathfrak{g}]}$ (so $b_1(\mathfrak{g})=n$). This implies that $\{x_1,\ldots,x_{n}\}$ is a minimal generating set for $\mathfrak{g}$.
Now let $\mathfrak{f}_{n,p}$ be the free $p$-step nilpotent Lie algebra on generators $\{y_1,\ldots,y_{n}\}$. By the universal property of free $p$-step nilpotent Lie algebras, there exists a unique homomorphism $\pi$ of $\mathfrak{f}_{n,p}$ to $\mathfrak{g}$ such that $\pi(y_i)=x_i$ for all $i \in \{1,\ldots,n\}$. Because $\{x_1,\ldots,x_{n}\}$ is a generating set for $\mathfrak{g}$, it follows that $\pi$ is surjective.  Hence, we obtain a short exact sequence
$0 \rightarrow \mathfrak{n} \rightarrow \mathfrak{f}_{n,p} \xrightarrow{\pi} \mathfrak{g} \rightarrow 0$.
An element $Y$ of $\mathfrak{f}_{n,p}$ is always of the form $\sum_{i=1}^{n}\lambda_i y_i+ \sum_{j=1}^r[a_i,b_j]$, with $a_i,b_j \in \mathfrak{f}_{n,p}$. Under the composition of $\pi$ with the projection of $\mathfrak{g}$ onto $\frac{\mathfrak{g}}{[\mathfrak{g},\mathfrak{g}]}$, $Y$ is mapped to $\sum_{i=1}^{n}\lambda_i \overline{x_i}$. Since  $\{\overline{x_1},\ldots,\overline{x_{n}}\}$ is a basis of $\frac{\mathfrak{g}}{[\mathfrak{g},\mathfrak{g}]}$, $\pi(Y)=0$ implies that $\lambda_i=0$ for all $i \in \{1,\ldots,n\}$. This shows that $\mathfrak{n} \subset [\mathfrak{f}_{n,p},\mathfrak{f}_{n,p}]$. Hence, $0 \rightarrow \mathfrak{n} \rightarrow \mathfrak{f}_{n,p} \xrightarrow{\pi} \mathfrak{g} \rightarrow 0$ is a free $p$-step nilpotent extension of  $\mathfrak{g}$. It now easily follows that every $p$-step nilpotent Lie algebra also has a free $r$-step nilpotent extension, for every $r \geq p$. \\ \\

\indent It is clear that free $r$-step nilpotent extensions of a nilpotent Lie algebra $\mathfrak{g}$ are not unique, since they involve a particular choice of generators of $\mathfrak{g}$. However, they are equivalent in some sense.
\begin{proposition}\label{prop: unique nilp ext} Let $\mathfrak{g}$ be a $p$-step nilpotent Lie algebra. Two free $r$-step nilpotent extensions $0 \rightarrow \mathfrak{n}_i \rightarrow \mathfrak{f}_{n,q} \xrightarrow{\pi_i} \mathfrak{g} \rightarrow 0$ for $i=1,2$, of $\mathfrak{g}$ are always equivalent, meaning that there exists a $\theta \in \mathrm{Aut}(\mathfrak{f}_{n,r})$ such that $\pi_1 = \pi_2 \circ \theta$.
\end{proposition}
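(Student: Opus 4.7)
The plan is to build $\theta$ from the universal property of the free $r$-step nilpotent Lie algebra, and then verify it is an automorphism using the fact that for nilpotent Lie algebras, an endomorphism is surjective as soon as it is surjective modulo the commutator subalgebra.

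First, fix generators $\{y_1,\ldots,y_n\}$ of $\mathfrak{f}_{n,r}$ and set $x_i:=\pi_1(y_i)$. By hypothesis each $\pi_i$ factors through a free $p$-step nilpotent extension, so the induced map $\mathfrak{f}_{n,r}/[\mathfrak{f}_{n,r},\mathfrak{f}_{n,r}]\to\mathfrak{g}/[\mathfrak{g},\mathfrak{g}]$ is an isomorphism for both $i=1$ and $i=2$. In particular the classes $\overline{x_i}$ form a basis of $\mathfrak{g}/[\mathfrak{g},\mathfrak{g}]$. Since $\pi_2$ is surjective, for each $i$ I choose $z_i\in\mathfrak{f}_{n,r}$ with $\pi_2(z_i)=x_i$. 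Applying the universal property of the free $r$-step nilpotent Lie algebra to the assignment $y_i\mapsto z_i$, I obtain a Lie algebra endomorphism $\theta:\mathfrak{f}_{n,r}\to\mathfrak{f}_{n,r}$. By construction $\pi_2\circ\theta$ and $\pi_1$ agree on the generators $y_i$, hence $\pi_2\circ\theta=\pi_1$.

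The remaining task, which is the main content of the argument, is to show $\theta\in\mathrm{Aut}(\mathfrak{f}_{n,r})$. The crucial observation is that the induced map $\overline{\theta}:\mathfrak{f}_{n,r}/[\mathfrak{f}_{n,r},\mathfrak{f}_{n,r}]\to\mathfrak{f}_{n,r}/[\mathfrak{f}_{n,r},\mathfrak{f}_{n,r}]$ is an isomorphism: it sends the basis $\overline{y_i}$ to $\overline{z_i}$, and since $\pi_2$ is an isomorphism on abelianizations and $\pi_2(z_i)=x_i$ lifts a basis of $\mathfrak{g}/[\mathfrak{g},\mathfrak{g}]$, the vectors $\overline{z_i}$ are also a basis.

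To conclude, I use the standard fact for a finite-dimensional nilpotent Lie algebra $L$: if a subspace $V\subset L$ satisfies $V+[L,L]=L$, then $V$ generates $L$ as a Lie algebra; this follows by downward induction on the terms $L^k$ of the lower central series, since $L^k=[V,L^{k-1}]+L^{k+1}$ and $L^{r+1}=0$. Applied to $V:=\mathrm{Im}(\theta)$, which contains representatives of a basis of $\mathfrak{f}_{n,r}/[\mathfrak{f}_{n,r},\mathfrak{f}_{n,r}]$, this shows $\theta$ is surjective, and hence bijective by finite dimensionality. Thus $\theta\in\mathrm{Aut}(\mathfrak{f}_{n,r})$, completing the equivalence. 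The only subtle point is this last step; everything else reduces to the universal property.
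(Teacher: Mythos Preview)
Your argument is correct and follows essentially the same route as the paper: construct $\theta$ from the universal property of $\mathfrak{f}_{n,r}$ using that both $\pi_i$ induce isomorphisms on abelianizations, then verify that $\theta$ is an automorphism. The paper phrases the construction slightly differently (defining a linear map on $\mathfrak{f}_{n,r}/[\mathfrak{f}_{n,r},\mathfrak{f}_{n,r}]$ via $\overline{\pi_2}^{-1}$ and $\overline{\pi_1}$ before extending), and leaves the check that $\theta$ is an automorphism to the reader; your version is more direct in choosing explicit lifts $z_i$ and, usefully, makes the surjectivity step explicit via the standard nilpotency fact that a subalgebra surjecting onto the abelianization must be everything.
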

\begin{proof} Fix a set of generators $\{x_1,\ldots,x_{n}\}$ for $\mathfrak{f}_{n,r}$. Now define $a_i:=\pi_1(x_i)$ and $b_i:=\pi_2(x_i)$ for all $i \in \{1,\ldots,n\}$. Since $\{\overline{a_1},\ldots,\overline{a_{n}}\}$ and $\{\overline{b_1},\ldots,\overline{b_{n}}\}$ are two sets of bases for $\frac{\mathfrak{g}}{[\mathfrak{g},\mathfrak{g}]}$, we can find a linear invertible operator $T$ on $\frac{\mathfrak{g}}{[\mathfrak{g},\mathfrak{g}]}$ such that $T(a_i)=b_i$ for all $i$. Since $<x_1,\ldots,x_n>=\frac{\mathfrak{f}_{n,r}}{[\mathfrak{f}_{n,r},\mathfrak{f}_{n,r}]}$ is mapped isomorphically onto $\frac{\mathfrak{g}}{[\mathfrak{g},\mathfrak{g}]}$ by $\overline{\pi_1}$ and $\overline{\pi_2}$, we obtain an linear invertible map $S:=\overline{\pi_2}^{-1}\circ T \circ \overline{\pi_1}: <x_1,\ldots,x_n> \rightarrow <x_1,\ldots,x_n> $. Using the universal property of $\mathfrak{f}_{n,r}$, we can uniquely extend $S$ to a Lie algebra homomorphism $\theta: \mathfrak{f}_{n,r} \rightarrow \mathfrak{f}_{n,r}$. We leave it to the reader to check that $\theta$ has the desired properties, using the universal property of $\mathfrak{f}_{n,r}$.
\end{proof}
We can now state and prove the following characterization of the filtration of $\mathrm{H}^2(\mathfrak{g},M)$.
\begin{theorem} \label{th: filtration} Let $\mathfrak{g}$ be a $p$-step nilpotent Lie algebra, $M$ a $q$-step nilpotent $\mathfrak{g}$-module and
\[ 0 \rightarrow \mathfrak{n} \rightarrow \mathfrak{f}_{n,r} \xrightarrow{\pi} \mathfrak{g}\rightarrow 0 \]
a free $r$-step nilpotent extension of $\mathfrak{g}$, then
\[ F_r\mathrm{H}^2(\mathfrak{g},M) = \mathrm{Ker} \ \Big( \pi^2: \mathrm{H}^2(\mathfrak{g},M) \rightarrow \mathrm{H}^2(\mathfrak{f}_{n,r},M)\Big)\]
for every $r \in \mathbb{Z}$ such that $\max\{p,q\} \leq r \leq p+q$.
\end{theorem}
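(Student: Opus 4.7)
The plan is to combine Proposition \ref{prop: vergne} with the pullback construction from the preliminaries. By Proposition \ref{prop: vergne}, an element $\alpha \in \mathrm{H}^2(\mathfrak{g},M)$ lies in $F_r\mathrm{H}^2(\mathfrak{g},M)$ if and only if the corresponding extension $0 \to M \to \mathfrak{e} \xrightarrow{\pi_{\mathfrak{e}}} \mathfrak{g} \to 0$ has $\mathfrak{e}$ nilpotent of class at most $r$. On the other hand, by naturality of the correspondence $\mathrm{H}^2(-,M) \leftrightarrow \mathrm{Ext}(-,M)$, the class $\pi^2(\alpha) \in \mathrm{H}^2(\mathfrak{f}_{n,r},M)$ corresponds precisely to the pullback extension $0 \to M \to \mathfrak{e}' \xrightarrow{\psi_2} \mathfrak{f}_{n,r} \to 0$, where $\mathfrak{e}'$ is obtained by pulling back $(\mathfrak{e} \xrightarrow{\pi_{\mathfrak{e}}} \mathfrak{g}, \mathfrak{f}_{n,r} \xrightarrow{\pi} \mathfrak{g})$. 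Hence $\pi^2(\alpha) = 0$ if and only if this pullback is split. The theorem is therefore equivalent to the statement: $\mathfrak{e}$ is nilpotent of class $\leq r$ iff the pullback extension $\mathfrak{e}'$ is split.

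For the forward direction, suppose $\mathfrak{e}$ has class $\leq r$. Since $\mathfrak{f}_{n,r}$ also has class $\leq r$, the closing remark of the preliminaries gives that $\mathfrak{e}'$ has class $\leq r$. Pick any lifts $e_i \in \mathfrak{e}'$ of the generators $y_i$ of $\mathfrak{f}_{n,r}$ under $\psi_2$. Because $\mathfrak{e}'$ is $r$-step nilpotent, the universal property of $\mathfrak{f}_{n,r}$ extends $y_i \mapsto e_i$ uniquely to a Lie algebra homomorphism $s: \mathfrak{f}_{n,r} \to \mathfrak{e}'$. Applying the universal property once more (with target $\mathfrak{f}_{n,r}$ itself), $\psi_2 \circ s$ and $\mathrm{Id}_{\mathfrak{f}_{n,r}}$ agree on generators, hence coincide. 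So $s$ splits the pullback extension and $\pi^2(\alpha) = 0$.

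For the reverse direction, suppose $\pi^2(\alpha) = 0$, so the pullback extension is split. Note that $M$, viewed as an $\mathfrak{f}_{n,r}$-module via $\pi$, remains $q$-step nilpotent, since $\pi$ is surjective and so the filtration $M_i = \{m : xm \in M_{i-1} \text{ for all } x\}$ is unchanged. The base $\mathfrak{f}_{n,r}$ has class $r$, and $r \geq q$, so Proposition \ref{prop: bound nilp class} applied to the split extension yields that $\mathfrak{e}'$ is nilpotent of class $\max\{r,q\} = r$. Now $\mathfrak{e}$ is a quotient of $\mathfrak{e}'$: the projection $\psi_1: \mathfrak{e}' \to \mathfrak{e}$, $(x,y) \mapsto x$, is surjective because $\pi$ is surjective, with kernel $\{0\} \times \mathfrak{n}$. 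A quotient of an $r$-step nilpotent Lie algebra has class at most $r$, so $\mathfrak{e}$ has class $\leq r$. By Proposition \ref{prop: vergne}, $\alpha \in F_r\mathrm{H}^2(\mathfrak{g},M)$.

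The main delicate point is the identification of $\pi^2(\alpha)$ with the pullback extension class; once this is in place the rest is straightforward manipulation with the universal property of $\mathfrak{f}_{n,r}$ and Proposition \ref{prop: bound nilp class}. This identification can be checked either on the cocycle side (if $\alpha = \overline{\omega}$, then $\pi^2(\alpha) = \overline{\omega \circ (\pi \wedge \pi)}$, which is exactly the cocycle of the pullback semidirect-product description) or by chasing the connecting map in the Hochschild–Serre sequence; I would use the cocycle formulation since it integrates cleanly with the explicit description of $\Theta$ recalled in the preliminaries.
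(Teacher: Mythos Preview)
Your proposal is correct and follows essentially the same route as the paper: both directions rest on Proposition~\ref{prop: vergne}, the pullback construction, the universal property of $\mathfrak{f}_{n,r}$ for the forward inclusion, and Proposition~\ref{prop: bound nilp class} together with surjectivity of $\psi_1$ for the reverse inclusion. The only cosmetic difference is the justification that $\pi^2(\alpha)$ corresponds to the pullback extension: the paper makes this explicit via the naturality square of the Hochschild--Serre low-degree sequence (tracking $\mathrm{Id}_M$ through the diagram), whereas you propose checking it directly on cocycles; either works and the underlying argument is the same.
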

\begin{proof} To prove this we will use the explicit description of the bijective correspondence between $\mathrm{H}^2(\mathfrak{g},M)$ and $\mathrm{Ext}(\mathfrak{g},M)$, as described in the preliminaries. Fix some integer $r$ such that $\max\{p,q\} \leq r \leq p+q$. Now choose an element $\overline{f} \in F_r\mathrm{H}^2(\mathfrak{g},M)$. By Proposition \ref{prop: vergne}, the cocycle $f$ corresponds to an extension $ 0 \rightarrow M \rightarrow \mathfrak{e} \xrightarrow{p} \mathfrak{g} \rightarrow 0$ of $\mathfrak{g}$ by $M$ such that $\mathfrak{e}$ is nilpotent of class at most $r$. Using the pullback construction, we obtain a commutative diagram
\[ \xymatrix{  0 \ar[r] & M \ar[r] \ar[d]^{\mathrm{Id}} & \mathfrak{e}' \ar[r] \ar[d]^{\theta} & \mathfrak{f}_{n,r} \ar[d]^{\pi} \ar[r] & 0  \\
  0 \ar[r] & M \ar[r]  & \mathfrak{e} \ar[r]^{p}  & \mathfrak{g} \ar[r] & 0.} \]
Here, the top sequence is an extension of $\mathfrak{f}_{n,r}$ by $M$, where $M$ becomes a $\mathfrak{f}_{n,r}$-module via $\pi$. By the naturality of the exact sequence of low degree terms coming from the Hochschild-Serre spectral sequence, we obtain a commutative diagram
\[ \xymatrix{\mathrm{Hom}_{\mathfrak{f}_{n,r}}(M,M) \ar[r]^{d_{\mathfrak{f}_{n,p}}} & \mathrm{H}^2(\mathfrak{f}_{n,r},M) \\
\mathrm{Hom}_{\mathfrak{g}}(M,M) \ar[u] \ar[r]^{d_{\mathfrak{g}}} & \mathrm{H}^2(\mathfrak{g},M). \ar[u]^{\pi^2} } \]
Following $\mathrm{Id}_M \in \mathrm{Hom}_{\mathfrak{g}}(M,M)$ through the diagram, we see that $\pi^2(\overline{f})$ is mapped to the element in $\mathrm{H}^2(\mathfrak{f}_{n,r},M)$ corresponding to the equivalence class of $0 \rightarrow M \rightarrow \mathfrak{e}' \rightarrow \mathfrak{f}_{n,r} \rightarrow 0$. Note that it follows from the pullback construction that $\mathfrak{e}'$ is nilpotent of class at most $r$. Therefore, the universal property of $\mathfrak{f}_{n,r}$ implies that $0 \rightarrow M \rightarrow \mathfrak{e}' \rightarrow \mathfrak{f}_{n,r} \rightarrow 0$ splits. Since the class of split extensions of $\mathfrak{f}_{n,r}$ by $M$ corresponds to the zero element in $\mathrm{H}^2(\mathfrak{f}_{n,r},M)$, this shows that $\pi^2(\overline{f})=0$. We have proven that $F_r\mathrm{H}^2(\mathfrak{g},M) \subseteq \mathrm{Ker} \ \pi^2$. \\
Now suppose we have an element $\overline{f} \in \mathrm{H}^2(\mathfrak{g},M)$ such that $\pi^2(\overline{f})=0$. Considering again the two commutative diagrams above, we see that this implies that $0 \rightarrow M \rightarrow \mathfrak{e}' \rightarrow \mathfrak{f}_{n,r} \rightarrow 0$ is split, so $\mathfrak{e} \cong M \rtimes \mathfrak{f}_{n,r}$. Since $M$ is a nilpotent $\mathfrak{f}_{n,r}$-module of class $q$, it follows from Proposition \ref{prop: bound nilp class} that $\mathfrak{e}'$ is nilpotent of class $\max\{q,r\}$. But  $q \leq r$, so $\mathfrak{e}'$ is nilpotent of class $r$. Since one can easily verify that $\theta$ is surjective, $\mathfrak{e}$ is nilpotent of class at most $r$. Hence, by Proposition \ref{prop: vergne}, $\overline{f} \in  F_r\mathrm{H}^2(\mathfrak{g},M)$. This proves that $F_r\mathrm{H}^2(\mathfrak{g},M) \supseteq \mathrm{Ker} \ \pi^2$.
\end{proof}
\begin{corollary} \label{cor: main cor} Let $\mathfrak{g}$ be a $p$-step nilpotent Lie algebra, $M$ a $q$-step nilpotent $\mathfrak{g}$-module and
\[ 0 \rightarrow \mathfrak{n} \rightarrow \mathfrak{f}_{n,r} \xrightarrow{\pi} \mathfrak{g}\rightarrow 0 \]
a free $r$-step nilpotent extension of $\mathfrak{g}$. Then, for every $r \in \mathbb{Z}$ such that $\max\{p,q\} \leq r \leq p+q$, we have an exact sequence
\[ 0 \rightarrow \mathrm{H}^1(\mathfrak{g},M) \rightarrow \mathrm{H}^1(\mathfrak{f}_{n,r},M) \rightarrow \mathrm{Hom}_{\mathfrak{g}}(\frac{\mathfrak{n}}{[\mathfrak{n},\mathfrak{n}]},M) \xrightarrow{d} F_r\mathrm{H}^2(\mathfrak{g},M) \rightarrow 0 \]
such that $d(f)=\overline{f \circ \varphi_r}$,
 where $\varphi_r$ is a $2$-cocycle corresponding to
\[ 0 \rightarrow \frac{\mathfrak{n}}{[\mathfrak{n},\mathfrak{n}]} \rightarrow \frac{\mathfrak{f}_{n,r}}{[\mathfrak{n},\mathfrak{n}]} \xrightarrow{\pi} \mathfrak{g}\rightarrow 0. \]
\end{corollary}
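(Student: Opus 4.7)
The plan is to apply the Hochschild--Serre spectral sequence to the free $r$-step nilpotent extension
\[ 0 \rightarrow \mathfrak{n} \rightarrow \mathfrak{f}_{n,r} \xrightarrow{\pi} \mathfrak{g}\rightarrow 0 \]
with coefficients in $M$ (regarded as an $\mathfrak{f}_{n,r}$-module via $\pi$), and then combine the resulting five-term exact sequence with Theorem \ref{th: filtration}. Since $\mathfrak{n} \subset \ker\pi$, the ideal $\mathfrak{n}$ acts trivially on $M$, so $M^{\mathfrak{n}}=M$ and $\mathrm{H}^1(\mathfrak{n},M)=\mathrm{Hom}_k(\mathfrak{n}/[\mathfrak{n},\mathfrak{n}],M)$. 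Taking $\mathfrak{g}$-invariants yields $\mathrm{H}^1(\mathfrak{n},M)^{\mathfrak{g}} \cong \mathrm{Hom}_{\mathfrak{g}}(\mathfrak{n}/[\mathfrak{n},\mathfrak{n}],M)$. Hence the standard five-term sequence reads
\[ 0 \rightarrow \mathrm{H}^1(\mathfrak{g},M) \rightarrow \mathrm{H}^1(\mathfrak{f}_{n,r},M) \rightarrow \mathrm{Hom}_{\mathfrak{g}}(\tfrac{\mathfrak{n}}{[\mathfrak{n},\mathfrak{n}]},M) \xrightarrow{d} \mathrm{H}^2(\mathfrak{g},M) \xrightarrow{\pi^2} \mathrm{H}^2(\mathfrak{f}_{n,r},M). \]

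By exactness, $\mathrm{Im}(d)=\mathrm{Ker}(\pi^2)$, and Theorem \ref{th: filtration} identifies $\mathrm{Ker}(\pi^2)=F_r\mathrm{H}^2(\mathfrak{g},M)$ in the range $\max\{p,q\}\leq r\leq p+q$. Replacing the last term with $F_r\mathrm{H}^2(\mathfrak{g},M)$ then turns the five-term sequence into the claimed short exact sequence ending in $0$.

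It remains to describe the differential $d$ explicitly as $f\mapsto \overline{f\circ\varphi_r}$. The key step is the standard cochain-level computation of $d_2^{0,1}$: choose a $k$-linear section $s:\mathfrak{g}\to\mathfrak{f}_{n,r}$, so that $\omega(x\wedge y):=[s(x),s(y)]-s([x,y])\in\mathfrak{n}$ is a $2$-cocycle representing the extension. Reducing $\omega$ modulo $[\mathfrak{n},\mathfrak{n}]$ gives exactly a cocycle $\varphi_r$ for the quotient extension $0\to\mathfrak{n}/[\mathfrak{n},\mathfrak{n}]\to\mathfrak{f}_{n,r}/[\mathfrak{n},\mathfrak{n}]\to\mathfrak{g}\to 0$. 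Given $f\in\mathrm{Hom}_{\mathfrak{g}}(\mathfrak{n}/[\mathfrak{n},\mathfrak{n}],M)$, lift the associated map $\hat f:\mathfrak{n}\to M$ to the $1$-cochain $\tilde f:\mathfrak{f}_{n,r}\to M$ defined by $\tilde f(n+s(x))=\hat f(n)$. A direct evaluation of the Chevalley--Eilenberg coboundary on pairs $(s(x_1),s(x_2))$ shows that $d\tilde f$ coincides (up to the universal sign in the definition of $d_2$) with the pullback of $f\circ\varphi_r$ to $\mathfrak{f}_{n,r}$, so the transgression sends $f$ to the class $\overline{f\circ\varphi_r}$ in $\mathrm{H}^2(\mathfrak{g},M)$.

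The only step that requires real care is this last cochain computation of $d_2^{0,1}$, and in particular checking that $f\circ\varphi_r$ really is a $2$-cocycle on $\mathfrak{g}$ (which follows because $f$ is $\mathfrak{g}$-equivariant and $\varphi_r$ cocycle-represents an extension), and that the construction is independent of the choice of section $s$ up to a coboundary. Both points are handled by the same Hochschild--Serre machinery that yields the five-term exact sequence; once this identification is in place, exactness of the five-term sequence together with Theorem \ref{th: filtration} deliver the statement.
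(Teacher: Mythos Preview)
Your derivation of the exact sequence is exactly the paper's: apply the Hochschild--Serre five-term sequence to the free $r$-step nilpotent extension, identify $\mathrm{H}^1(\mathfrak{n},M)^{\mathfrak{g}}$ with $\mathrm{Hom}_{\mathfrak{g}}(\mathfrak{n}/[\mathfrak{n},\mathfrak{n}],M)$, and then replace $\mathrm{Ker}\,\pi^2$ by $F_r\mathrm{H}^2(\mathfrak{g},M)$ via Theorem~\ref{th: filtration}.

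Where you diverge is in the identification of $d$. You compute the transgression $d_2^{0,1}$ directly at the cochain level: lift $f$ along a linear section $s$, evaluate the Chevalley--Eilenberg coboundary on $s(x_1)\wedge s(x_2)$, and read off $-f\circ\varphi_r$. The paper instead argues by naturality: it runs the Hochschild--Serre spectral sequence with coefficients in $\mathfrak{n}/[\mathfrak{n},\mathfrak{n}]$, uses the fact (recorded in the preliminaries) that $d(\mathrm{Id}_{\mathfrak{n}/[\mathfrak{n},\mathfrak{n}]})=\overline{\varphi_r}$ is the extension class, and then pushes forward along the coefficient map $f$ to get $d(f)=f^2(\overline{\varphi_r})=\overline{f\circ\varphi_r}$. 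Your route is more explicit and self-contained (no second spectral sequence, no appeal to the $\Psi$-map from the preliminaries), at the cost of the sign bookkeeping and the independence-of-section check you flag. The paper's route is cleaner once one has already internalised that the transgression of the identity is the extension class, and it avoids any cochain manipulation. Both are standard and correct.
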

\begin{proof}
Consider the Hochschild-Serre spectral sequence $(E_r(M),d_r)$ associated to the given free $r$-step nilpotent extension of $\mathfrak{g}$. Because the image of $d^{0,1}_2$ is exactly the kernel of $\pi^2: \mathrm{H}^2(\mathfrak{g},M) \rightarrow \mathrm{H}^2(\mathfrak{f}_{n,r},M)$, the theorem implies that the standard exact sequence of low degree term yields the wanted exact sequence. Now let us examine the map $d:=d^{0,1}_2$.  \\
Take $f \in \mathrm{Hom}_{\mathfrak{g}}(\frac{\mathfrak{n}}{[\mathfrak{n},\mathfrak{n}]},M)$ and let $(E_r(\frac{\mathfrak{n}}{[\mathfrak{n},\mathfrak{n}]}),d_r)$ be the Hochschild-Serre spectral sequence associated to the given free $r$-step nilpotent extension of $\mathfrak{g}$ with coefficients in $\frac{\mathfrak{n}}{[\mathfrak{n},\mathfrak{n}]}$. The coefficient map $f: \frac{\mathfrak{n}}{[\mathfrak{n},\mathfrak{n}]} \rightarrow M$ induces a spectral sequence morphism $E_{r}(\frac{\mathfrak{n}}{[\mathfrak{n},\mathfrak{n}]}) \rightarrow E_r(M)$. Hence, we have a commutative diagram
\[ \xymatrix{\mathrm{Hom}_{\mathfrak{g}}(\frac{\mathfrak{n}}{[\mathfrak{n},\mathfrak{n}]},M) \ar[r]^{d} & \mathrm{H}^2(\mathfrak{g},M) \\
\mathrm{Hom}_{\mathfrak{g}}(\frac{\mathfrak{n}}{[\mathfrak{n},\mathfrak{n}]},\frac{\mathfrak{n}}{[\mathfrak{n},\mathfrak{n}]}) \ar[u]^{f \circ \ldots } \ar[r]^{d} & \mathrm{H}^2(\mathfrak{g},\frac{\mathfrak{n}}{[\mathfrak{n},\mathfrak{n}]}). \ar[u]^{f^{2}} } \]
One can check that $d(\mathrm{Id}_{\frac{\mathfrak{n}}{[\mathfrak{n},\mathfrak{n}]}})$ equals $\overline{\varphi_r}$, where $\overline{\varphi_r}$ corresponds to the equivalence class of $0 \rightarrow \frac{\mathfrak{n}}{[\mathfrak{n},\mathfrak{n}]} \rightarrow \frac{\mathfrak{f}_{n,r}}{[\mathfrak{n},\mathfrak{n}]} \xrightarrow{\pi} \mathfrak{g}\rightarrow 0$. Hence, following $\mathrm{Id}_{\frac{\mathfrak{n}}{[\mathfrak{n},\mathfrak{n}]}}$ through the diagram, we see that $d(f)=f^{2}(\overline{\varphi_r})=\overline{f \circ \varphi_r}$. This finishes the proof.
\end{proof}
If we restrict to the case of central extensions of $\mathfrak{g}$ with one-dimensional kernel, our filtrations reduce to
\[ F_p\mathrm{Ext}(\mathfrak{g},k) \subseteq F_{p+1}\mathrm{Ext}(\mathfrak{g},k)=\mathrm{Ext}(\mathfrak{g},k)\]
and
\[ F_p\mathrm{H}^2(\mathfrak{g},k) \subseteq F_{p+1}\mathrm{H}^2(\mathfrak{g},k)=\mathrm{H}^2(\mathfrak{g},k), \]
when $\mathfrak{g}$ is $p$-step nilpotent and $k$ is viewed as a trivial $\mathfrak{g}$-module. representatives of elements in $F_p\mathrm{Ext}(\mathfrak{g},k)$ are called central extensions of class $p$, whereas representatives of elements in $\mathrm{Ext}(\mathfrak{g},k)$ that are not contained in $F_p\mathrm{Ext}(\mathfrak{g},k)$ are called central extensions of class $p+1$. \\ \\
\indent Using Corollary \ref{cor: main cor}, we obtain an description of $F_p\mathrm{H}^2(\mathfrak{g},k)$ and $\mathrm{H}^2(\mathfrak{g},k)$.
\begin{corollary} \label{cor: central dimension exprs} Let $\mathfrak{g}$ be a $p$-step nilpotent Lie algebra. If we consider trivial coefficients $k$, then the map $d$ from Corollary \ref{cor: main cor} is an isomorphism, for $r \in \{p,p+1\}$.
In particular, if
\[ 0 \rightarrow \mathfrak{n} \rightarrow \mathfrak{f}_{n,p} \xrightarrow{\pi} \mathfrak{g} \rightarrow 0 \]
is a free $p$-step nilpotent extension, then
\begin{eqnarray*}
|F_p\mathrm{H}^2(\mathfrak{g},k)| &=& |\frac{\mathfrak{n}}{[\mathfrak{f}_{n,p},\mathfrak{n}]}|;\\
|\mathrm{H}^2(\mathfrak{g},k)|&=&  |\frac{\mathfrak{n}}{[\mathfrak{f}_{n,p},\mathfrak{n}]}| + b_2(\mathfrak{f}_{n,p})-|[i(\mathfrak{n}),\mathfrak{f}_{n,p+1}]\cap \mathfrak{f}_{n,p+1}^{p+1}|,
\end{eqnarray*}
where $i: \mathfrak{f}_{n,p} \rightarrow \mathfrak{f}_{n,p+1}$ is the canonical vector space inclusion.
\end{corollary}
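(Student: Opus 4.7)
The approach is to apply Corollary \ref{cor: main cor} with trivial coefficients $M=k$ (a $1$-step nilpotent $\mathfrak{g}$-module, so $q=1$), once with $r=p$ and once with $r=p+1$, noting that $F_{p+1}\mathrm{H}^2(\mathfrak{g},k)=\mathrm{H}^2(\mathfrak{g},k)$ since $p+q=p+1$. First I would check that the map $d$ is an isomorphism in both cases. With trivial coefficients, $\mathrm{H}^1(\mathfrak{h},k)\cong(\mathfrak{h}/[\mathfrak{h},\mathfrak{h}])^*$ for any Lie algebra $\mathfrak{h}$. Since a free $r$-step nilpotent extension satisfies $\mathfrak{n}\subset [\mathfrak{f}_{n,r},\mathfrak{f}_{n,r}]$, the induced map $\mathfrak{f}_{n,r}/[\mathfrak{f}_{n,r},\mathfrak{f}_{n,r}]\to \mathfrak{g}/[\mathfrak{g},\mathfrak{g}]$ is an isomorphism, hence so is the restriction map $\mathrm{H}^1(\mathfrak{g},k)\to \mathrm{H}^1(\mathfrak{f}_{n,r},k)$. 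By exactness of the four-term sequence in Corollary \ref{cor: main cor}, the arrow $\mathrm{H}^1(\mathfrak{f}_{n,r},k)\to \mathrm{Hom}_{\mathfrak{g}}(\mathfrak{n}/[\mathfrak{n},\mathfrak{n}],k)$ vanishes, so $d$ is injective and thus an isomorphism.

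Next I would compute the dimension of the target. Since $k$ is trivial, a $\mathfrak{g}$-equivariant map $\mathfrak{n}/[\mathfrak{n},\mathfrak{n}]\to k$ is precisely one that vanishes on $\mathfrak{g}\cdot(\mathfrak{n}/[\mathfrak{n},\mathfrak{n}])=[\mathfrak{f}_{n,r},\mathfrak{n}]/[\mathfrak{n},\mathfrak{n}]$. Hence $\mathrm{Hom}_{\mathfrak{g}}(\mathfrak{n}/[\mathfrak{n},\mathfrak{n}],k)\cong (\mathfrak{n}/[\mathfrak{f}_{n,r},\mathfrak{n}])^*$. Specializing to $r=p$ yields the first equality $|F_p\mathrm{H}^2(\mathfrak{g},k)|=|\mathfrak{n}/[\mathfrak{f}_{n,p},\mathfrak{n}]|$.

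For the second equality, let $\mathfrak{n}'$ denote the kernel of $\pi'\colon \mathfrak{f}_{n,p+1}\to \mathfrak{g}$ in the free $(p+1)$-step nilpotent extension. Since $\pi'$ factors through the canonical projection $q\colon \mathfrak{f}_{n,p+1}\to \mathfrak{f}_{n,p}$, one has $\mathfrak{n}'=q^{-1}(\mathfrak{n})=i(\mathfrak{n})+\mathfrak{f}_{n,p+1}^{p+1}$ as vector subspaces. Using that $[\mathfrak{f}_{n,p+1},\mathfrak{f}_{n,p+1}^{p+1}]\subset \mathfrak{f}_{n,p+1}^{p+2}=0$, I obtain $[\mathfrak{f}_{n,p+1},\mathfrak{n}']=[\mathfrak{f}_{n,p+1},i(\mathfrak{n})]$. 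Together with $|\mathfrak{n}'|=|\mathfrak{n}|+|\mathfrak{f}_{n,p+1}^{p+1}|=|\mathfrak{n}|+b_2(\mathfrak{f}_{n,p})$ (the latter equality being the identity $b_2(\mathfrak{f}_{n,p})=|H_{p+1}(n)|$ from the preliminaries), the already-proved formula $|\mathrm{H}^2(\mathfrak{g},k)|=|\mathfrak{n}'/[\mathfrak{f}_{n,p+1},\mathfrak{n}']|$ becomes
\[ |\mathrm{H}^2(\mathfrak{g},k)|=|\mathfrak{n}|+b_2(\mathfrak{f}_{n,p})-|[\mathfrak{f}_{n,p+1},i(\mathfrak{n})]|. \]

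The remaining step, which is the main bookkeeping obstacle, is to rewrite $|\mathfrak{n}|-|[\mathfrak{f}_{n,p+1},i(\mathfrak{n})]|$ as $|\mathfrak{n}/[\mathfrak{f}_{n,p},\mathfrak{n}]|-|[i(\mathfrak{n}),\mathfrak{f}_{n,p+1}]\cap \mathfrak{f}_{n,p+1}^{p+1}|$. Restricting $q$ to $[\mathfrak{f}_{n,p+1},i(\mathfrak{n})]$ produces a surjection onto $[\mathfrak{f}_{n,p},\mathfrak{n}]$, since $q$ is a Lie algebra map with $q(i(\mathfrak{n}))=\mathfrak{n}$; its kernel equals $[\mathfrak{f}_{n,p+1},i(\mathfrak{n})]\cap \mathfrak{f}_{n,p+1}^{p+1}$ because $\ker q=\mathfrak{f}_{n,p+1}^{p+1}$. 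The corresponding dimension formula $|[\mathfrak{f}_{n,p+1},i(\mathfrak{n})]|=|[\mathfrak{f}_{n,p},\mathfrak{n}]|+|[i(\mathfrak{n}),\mathfrak{f}_{n,p+1}]\cap \mathfrak{f}_{n,p+1}^{p+1}|$, substituted into the previous identity, yields the stated formula and completes the proof.
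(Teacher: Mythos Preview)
Your proof is correct and follows essentially the same route as the paper: both arguments use the exact sequence of Corollary~\ref{cor: main cor} with $M=k$, observe that $\mathfrak{n}\subset[\mathfrak{f}_{n,r},\mathfrak{f}_{n,r}]$ forces $d$ to be an isomorphism, identify $\mathrm{Hom}_{\mathfrak{g}}(\mathfrak{n}/[\mathfrak{n},\mathfrak{n}],k)$ with $(\mathfrak{n}/[\mathfrak{f}_{n,r},\mathfrak{n}])^*$, and then analyze the kernel $\mathfrak{n}'=i(\mathfrak{n})+\mathfrak{f}_{n,p+1}^{p+1}$ of the free $(p+1)$-step extension. Where the paper simply asserts the decomposition $\mathfrak{n}'/[\mathfrak{f}_{n,p+1},\mathfrak{n}']\cong \mathfrak{n}/[\mathfrak{f}_{n,p},\mathfrak{n}]\oplus \mathfrak{f}_{n,p+1}^{p+1}/\bigl([i(\mathfrak{n}),\mathfrak{f}_{n,p+1}]\cap\mathfrak{f}_{n,p+1}^{p+1}\bigr)$, you make the dimension bookkeeping explicit by restricting the projection $q$ to $[\mathfrak{f}_{n,p+1},i(\mathfrak{n})]$; this is exactly the verification the paper leaves to the reader.
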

\begin{proof}
Suppose we have an $r$-step nilpotent extension of $\mathfrak{g}$, with kernel $\mathfrak{n}$. Because $\mathfrak{n} \subset [\mathfrak{f}_{n,r},\mathfrak{f}_{n,r}]$ and the coefficients are trivial, it follows easily that $\mathrm{H}^1(\mathfrak{f}_{n,r},k) \rightarrow \mathrm{H}^1(\mathfrak{n},k)^{\mathfrak{g}}= \mathrm{Hom}_{\mathfrak{g}}(\frac{\mathfrak{n}}{[\mathfrak{n},\mathfrak{n}]},k)$ is the zero map. By exactness of the sequence in Corollary \ref{cor: main cor}, $\mathrm{Hom}_{\mathfrak{g}}(\frac{\mathfrak{n}}{[\mathfrak{n},\mathfrak{n}]},k) \xrightarrow{d} F_r\mathrm{H}^2(\mathfrak{g},k)$ is an isomorphism. \\
Now consider the given $p$-step nilpotent extension of $\mathfrak{g}$.
We know that $\mathrm{Hom}_{\mathfrak{g}}(\frac{\mathfrak{n}}{[\mathfrak{n},\mathfrak{n}]},k) \cong F_p\mathrm{H}^2(\mathfrak{g},k)$. Using the fact that $\mathfrak{f}_{n,p}$ acts trivially on $k$, it is not difficult to check that $\mathrm{Hom}_{\mathfrak{g}}(\frac{\mathfrak{n}}{[\mathfrak{n},\mathfrak{n}]},k)=\mathrm{Hom}_{\mathfrak{f}_{n,p}}(\frac{\mathfrak{n}}{[\mathfrak{n},\mathfrak{n}]},k)\cong \frac{\mathfrak{n}}{[\mathfrak{f}_{n,p},\mathfrak{n}]}$ as vector spaces. So the first equality is proven. \\
Now consider the free $(p+1)$-step nilpotent Lie algebra $\mathfrak{f}_{n,p+1}$ together with the canonical projection $p: \mathfrak{f}_{n,p+1} \rightarrow \mathfrak{f}_{n,p+1}/\mathfrak{f}_{n,p+1}^{p+1}=\mathfrak{f}_{n,p}$ and its canonical vector space splitting $i: \mathfrak{f}_{n,p} \rightarrow \mathfrak{f}_{n,p+1}$. Then, by definition, $0 \rightarrow \mathfrak{f}_{n,p+1}^{p+1}+i(\mathfrak{n}) \rightarrow \mathfrak{f}_{n,p+1} \rightarrow \mathfrak{g} \rightarrow 0$ is a free $(p+1)$-step nilpotent extension of $\mathfrak{g}$. Hence, once again we have $\mathrm{Hom}_{\mathfrak{g}}(\frac{i(\mathfrak{n})+\mathfrak{f}_{n,p+1}^{p+1}}{[i(\mathfrak{n})+\mathfrak{f}_{n,p+1}^{p+1},i(\mathfrak{n})+\mathfrak{f}_{n,p+1}^{p+1}]},k) \cong \mathrm{H}^2(\mathfrak{g},k)$, and $\mathrm{Hom}_{\mathfrak{g}}(\frac{i(\mathfrak{n})+\mathfrak{f}_{n,p+1}^{p+1}}{[i(\mathfrak{n})+\mathfrak{f}_{n,p+1}^{p+1},i(\mathfrak{n})+\mathfrak{f}_{n,p+1}^{p+1}]},k) \cong \frac{i(\mathfrak{n})+\mathfrak{f}_{n,p+1}^{p+1}}{[\mathfrak{f}_{n,p+1},i(\mathfrak{n})+\mathfrak{f}_{n,p+1}^{p+1}]}$ as vector spaces. Furthermore, one can verify that there is a vector space isomorphism
\[ \frac{i(\mathfrak{n})+\mathfrak{f}_{n,p+1}^{p+1}}{[\mathfrak{f}_{n,p+1},i(\mathfrak{n})+\mathfrak{f}_{n,p+1}^{p+1}]} \cong \frac{\mathfrak{n}}{[\mathfrak{f}_{n,p},\mathfrak{n}]} \oplus \frac{\mathfrak{f}_{n,p+1}^{p+1}}{[i(\mathfrak{n}),\mathfrak{f}_{n,p+1}]\cap \mathfrak{f}_{n,p+1}^{p+1}}.
\]
Since $|\mathfrak{f}_{n,p+1}^{p+1}|=b_2(\mathfrak{f}_{n,p})$, the second equality follows.
\end{proof}
We also obtain the following criterium for the existence of central extensions of class $p+1$.
\begin{corollary} \label{cor: central extension}
Let $\mathfrak{g}$ be a $p$-step nilpotent Lie algebra with free $p$-step nilpotent extension
\[ 0 \rightarrow \mathfrak{n} \rightarrow \mathfrak{f}_{n,p} \xrightarrow{\pi} \mathfrak{g} \rightarrow 0. \]
Then $\mathfrak{g}$ admits a central extension of class $p+1$ if and only if
\[  |\frac{\mathfrak{n}}{[\mathfrak{f}_{n,p},\mathfrak{n}]}| < b_2(\mathfrak{g}). \]
In particular, if $\mathfrak{n}$ is generated by a single element $X \in [\mathfrak{f}_{n,p},\mathfrak{f}_{n,p}]$, then $\mathfrak{g}$ admits a central extension of class $p+1$.
\end{corollary}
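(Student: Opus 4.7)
The plan is to interpret the existence of a central extension of class $p+1$ as the strictness of the inclusion $F_p\mathrm{H}^2(\mathfrak{g},k)\subseteq\mathrm{H}^2(\mathfrak{g},k)$, and then to convert this into the stated numerical condition via Corollary \ref{cor: central dimension exprs}.

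First I would invoke Proposition \ref{prop: bound nilp class} to note that every abelian extension of the $p$-step nilpotent Lie algebra $\mathfrak{g}$ by the trivial module $k$ is nilpotent of class either $p$ or $p+1$. Combined with Proposition \ref{prop: vergne} and the bijection between $\mathrm{H}^2(\mathfrak{g},k)$ and $\mathrm{Ext}(\mathfrak{g},k)$, this tells us that extensions of class at most $p$ correspond precisely to the classes in $F_p\mathrm{H}^2(\mathfrak{g},k)$. Consequently, $\mathfrak{g}$ admits a central extension of class $p+1$ if and only if $F_p\mathrm{H}^2(\mathfrak{g},k)\subsetneq\mathrm{H}^2(\mathfrak{g},k)$, and since both are finite-dimensional $k$-vector spaces this is equivalent to $|F_p\mathrm{H}^2(\mathfrak{g},k)|<b_2(\mathfrak{g})$. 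Feeding in the identity $|F_p\mathrm{H}^2(\mathfrak{g},k)|=|\mathfrak{n}/[\mathfrak{f}_{n,p},\mathfrak{n}]|$ from Corollary \ref{cor: central dimension exprs} then yields the desired characterization.

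For the ``in particular'' clause I would argue directly. If $\mathfrak{n}$ is generated as an ideal of $\mathfrak{f}_{n,p}$ by a single nonzero element $X\in[\mathfrak{f}_{n,p},\mathfrak{f}_{n,p}]$, then every element of $\mathfrak{n}$ can be written as $\lambda X+Y$ with $\lambda\in k$ and $Y\in[\mathfrak{f}_{n,p},\mathfrak{n}]$, so $|\mathfrak{n}/[\mathfrak{f}_{n,p},\mathfrak{n}]|\leq 1$. On the other hand, the existence of a nonzero $X$ in $[\mathfrak{f}_{n,p},\mathfrak{f}_{n,p}]$ forces $p\geq 2$, whence $|\mathfrak{g}|\geq 3$ because every nilpotent Lie algebra of dimension at most two is abelian. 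The Dixmier lower bound $b_i(\mathfrak{g})\geq 2$ cited in the introduction then gives $b_2(\mathfrak{g})\geq 2>|\mathfrak{n}/[\mathfrak{f}_{n,p},\mathfrak{n}]|$, so the first part applies. I do not foresee a serious obstacle here: the main content is already carried by Proposition \ref{prop: vergne} and Corollary \ref{cor: central dimension exprs}, and what remains is only the translation of strict inclusion of filtration pieces into strict inequality of dimensions, together with a one-line computation in the single-generator case.
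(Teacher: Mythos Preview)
Your argument is correct and follows essentially the same route as the paper: translate the existence of a central extension of class $p+1$ into the strict inequality $|F_p\mathrm{H}^2(\mathfrak{g},k)|<b_2(\mathfrak{g})$ via the filtration--extension correspondence, then plug in Corollary~\ref{cor: central dimension exprs}. For the single-generator clause your computation $\mathfrak{n}=kX+[\mathfrak{f}_{n,p},\mathfrak{n}]$ is actually slightly cleaner than the paper's, which phrases the same calculation through the auxiliary spaces $\gamma_i(\mathfrak{f}_{n,p},X)$ and tacitly takes $X$ to lie in a single homogeneous piece $H_r(n)$; your version works for any $X\in[\mathfrak{f}_{n,p},\mathfrak{f}_{n,p}]$ without that restriction, and the conclusion $b_2(\mathfrak{g})\geq 2$ is invoked identically.
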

\begin{proof}
The first statement is immediate from the previous corollary and the definitions of $F_p\mathrm{Ext}(\mathfrak{g},k)$ and $F_{p+1}\mathrm{Ext}(\mathfrak{g},k)=\mathrm{Ext}(\mathfrak{g},k)$. \\
Now suppose that $\mathfrak{n}$ is generated by a single element $X \in H_r(n) \subset  [\mathfrak{f}_{n,p},\mathfrak{f}_{n,p}]$.
Let $\gamma_r(\mathfrak{f}_{n,p},X)$ be the vector space spanned by $X$ and define $\gamma_i(\mathfrak{f}_{n,p},X)=[\mathfrak{f}_{n,p},\gamma_{i-1}(\mathfrak{f}_{n,p},X)]$ for $i \in \{r+1,\ldots, p\}$. Then it is clear that $\gamma_i(\mathfrak{f}_{n,p},X) \subset H_i(n)$ for all $i \in \{r,\ldots, p\}$ and $\mathfrak{n}=\gamma_1(\mathfrak{f}_{n,p},X) + \gamma_2(\mathfrak{f}_{n,p},X) + \ldots + \gamma_p(\mathfrak{f}_{n,p},X)$. This implies that $[\mathfrak{f}_{n,p},\mathfrak{n}]= \gamma_2(\mathfrak{f}_{n,p},X) + \ldots + \gamma_p(\mathfrak{f}_{n,p},X)$, hence $|\frac{\mathfrak{n}}{[\mathfrak{f}_{n,p},\mathfrak{n}]}|=|\gamma_r(\mathfrak{f}_{n,p},X)|=1$. Since nilpotent Lie algebras $\mathfrak{g}$ always satisfy $b_2(\mathfrak{g})\geq 2$, we conclude that $\mathfrak{g}$ has a central extension of class $p+1$.
\end{proof}
We conclude this section with a formula for $|F_2\mathrm{H}^2(\mathfrak{g},k)|$ and $|F_2\mathrm{H}^2(\mathfrak{g},\mathfrak{g})|$ in the case where $\mathfrak{g}$ is a $2$-step nilpotent Lie algebra.
\begin{corollary} Let $\mathfrak{g}$ be a $2$-step nilpotent Lie algebra with $b_1(\mathfrak{g})=n$, then
\begin{eqnarray*}
|F_2\mathrm{H}^2(\mathfrak{g},k)| & = & \binom{n}{2}+n - |\mathfrak{g}|; \\
|F_2\mathrm{H}^2(\mathfrak{g},\mathfrak{g})| &=& \binom{n+1}{2}|Z(\mathfrak{g})|- n|\mathfrak{g}| - |\mathfrak{g}||Z(\mathfrak{g})|+|\mathrm{Der}(\mathfrak{g})|.
\end{eqnarray*}
Here, $Z(\mathfrak{g})$ is the center of $\mathfrak{g}$ and $\mathrm{Der}(\mathfrak{g})$ is the derivation algebra of $\mathfrak{g}$.
\end{corollary}
\begin{proof} Choose a $2$-step nilpotent extension $0 \rightarrow \mathfrak{n} \rightarrow \mathfrak{f}_{n,2} \xrightarrow{\pi} \mathfrak{g} \rightarrow 0$ of $\mathfrak{g}$.
Corollary \ref{cor: central dimension exprs} tells us that $|F_2\mathrm{H}^2(\mathfrak{g},k)|=|\frac{\mathfrak{n}}{[\mathfrak{f}_{n,2},\mathfrak{n}]}|$. Because $[\mathfrak{f}_{n,2},\mathfrak{n}]=0$ and $|\mathfrak{f}_{n,2}|=\binom{n}{2}+n$, it follows that $|F_2\mathrm{H}^2(\mathfrak{g},k)|= \binom{n}{2}+n -|\mathfrak{g}|$. \\
Using Corollary \ref{cor: main cor}, we obtain an exact sequence
\[ 0 \rightarrow \mathrm{H}^1(\mathfrak{g},\mathfrak{g}) \rightarrow \mathrm{H}^1(\mathfrak{f}_{n,r},\mathfrak{g}) \rightarrow \mathrm{Hom}_{\mathfrak{g}}(\mathfrak{n},\mathfrak{g}) \xrightarrow{d} F_2\mathrm{H}^2(\mathfrak{g},\mathfrak{g}) \rightarrow 0. \]
This implies that $|F_2\mathrm{H}^2(\mathfrak{g},\mathfrak{g})|= |\mathrm{Hom}_{\mathfrak{g}}(\mathfrak{n},\mathfrak{g})|-|\mathrm{H}^1(\mathfrak{f}_{n,2},\mathfrak{g})|+|\mathrm{H}^1(\mathfrak{g},\mathfrak{g})|$.
Since $\mathfrak{g}$ acts trivially on $\mathfrak{n}$, one easily verifies that $|\mathrm{Hom}_{\mathfrak{g}}(\mathfrak{n},\mathfrak{g})|=|\mathfrak{n}||Z(\mathfrak{g})|=(\binom{n}{2}+n-|\mathfrak{g}|)|Z(\mathfrak{g})|$. Also, it is well-known that $\mathrm{H}^1(\mathfrak{g},\mathfrak{g})\cong \mathrm{Der}(\mathfrak{g})/\mathrm{Inn}(\mathfrak{g})$, where $\mathrm{Inn}(\mathfrak{g})$ is the space of inner derivations of $\mathfrak{g}$. Since $\mathrm{Inn}(\mathfrak{g})\cong \mathfrak{g}/Z(\mathfrak{g})$, we conclude that $|\mathrm{H}^1(\mathfrak{g},\mathfrak{g})|=|\mathrm{Der}(\mathfrak{g})|-|\mathfrak{g}|+|Z(\mathfrak{g})|$. It remains to determine $|\mathrm{H}^1(\mathfrak{f}_{n,r},\mathfrak{g})|$. Considering $0 \rightarrow \mathfrak{n} \rightarrow \mathfrak{f}_{n,2} \xrightarrow{\pi} \mathfrak{g} \rightarrow 0$ as a short exact sequence of $\mathfrak{f}_{n,2}$-modules, we obtain a long exact cohomology sequence
\[ 0 \rightarrow \mathfrak{n} \rightarrow Z(\mathfrak{f}_{n,2}) \rightarrow Z(\mathfrak{g}) \rightarrow \mathrm{Hom}_k(\frac{\mathfrak{f}_{n,2}}{[\mathfrak{f}_{n,2},\mathfrak{f}_{n,2}]},\mathfrak{n}) \rightarrow \mathrm{H}^1(\mathfrak{f}_{n,2},\mathfrak{f}_{n,2}) \rightarrow \mathrm{H}^1(\mathfrak{f}_{n,2},\mathfrak{g}). \]
We claim that the map $\mathrm{H}^1(\mathfrak{f}_{n,2},\mathfrak{f}_{n,2}) \rightarrow \mathrm{H}^1(\mathfrak{f}_{n,2},\mathfrak{g})$ is surjective. To prove this, first recall that if $\{x_1,\ldots,x_n\}$ are the generators of $\mathfrak{f}_{n,2}$, then any linear map from $<x_1,\ldots,x_n>$ to a $\mathfrak{f}_{n,2}$-module $M$ can be uniquely extended to a derivation of $\mathfrak{f}_{n,2}$ into $M$. Now choose a derivation $\varphi \in \mathrm{Der}(\mathfrak{f}_{n,2},\mathfrak{g})$ and a vector space splitting $s: \mathfrak{g} \rightarrow \mathfrak{f}_{n,2}$ of $\pi: \mathfrak{f}_{n,2} \rightarrow \mathfrak{g}$. Then the linear map from  $<x_1,\ldots,x_n>$ to $\mathfrak{f}_{n,2}$ that takes $x_i$ to $s(\varphi(x_i))$ can be extended to a derivation $\psi \in \mathrm{Der}(\mathfrak{f}_{n,2})$. Now consider $\overline{\psi} \in \mathrm{H}^1(\mathfrak{f}_{n,2},\mathfrak{f}_{n,2})=\mathrm{Der}(\mathfrak{f}_{n,2})/\mathrm{Inn}(\mathfrak{f}_{n,2})$. Then, under $\mathrm{H}^1(\mathfrak{f}_{n,2},\mathfrak{f}_{n,2}) \rightarrow \mathrm{H}^1(\mathfrak{f}_{n,2},\mathfrak{g})$, $\overline{\psi}$ is mapped to $\overline{\pi \circ \psi}$. Since  $\pi(\psi(x_i))=\varphi(x_i)$ for all $i$, and a derivation of a nilpotent Lie algebra is completely determined by its value on the generators, we conclude that $\pi \circ \psi=\varphi$ and hence $\mathrm{H}^1(\mathfrak{f}_{n,2},\mathfrak{f}_{n,2}) \rightarrow \mathrm{H}^1(\mathfrak{f}_{n,2},\mathfrak{g}): \overline{\psi} \mapsto \overline{\varphi}$. Since $\overline{\varphi}$ is chosen at random, we conclude that $\mathrm{H}^1(\mathfrak{f}_{n,2},\mathfrak{f}_{n,2}) \rightarrow \mathrm{H}^1(\mathfrak{f}_{n,2},\mathfrak{g})$ is surjective. This implies that $|\mathrm{H}^1(\mathfrak{f}_{n,r},\mathfrak{g})|=|\mathrm{H}^1(\mathfrak{f}_{n,r},\mathfrak{f}_{n,2})|-|\mathrm{Hom}_k(\frac{\mathfrak{f}_{n,2}}{[\mathfrak{f}_{n,2},\mathfrak{f}_{n,2}]},\mathfrak{n})|
+|Z(\mathfrak{g})|-|Z(\mathfrak{f}_{n,2})|+|\mathfrak{n}|$. Since $|\mathfrak{n}|=\binom{n}{2}+n-|\mathfrak{g}|$, $|H^1(\mathfrak{f}_{n,2},\mathfrak{f}_{n,2})|=n\Big( \binom{n}{2}+n-1\Big)$ and $|\mathrm{Hom}_k(\frac{\mathfrak{f}_{n,2}}{[\mathfrak{f}_{n,2},\mathfrak{f}_{n,2}]},\mathfrak{n})|=n|\mathfrak{n}|$, one can verify that $|F_2\mathrm{H}^2(\mathfrak{g},\mathfrak{g})| = \binom{n+1}{2}|Z(\mathfrak{g})|- n|\mathfrak{g}| - |\mathfrak{g}||Z(\mathfrak{g})|+|\mathrm{Der}(\mathfrak{g})|$.
\end{proof}
\begin{remark} \rm When $\mathfrak{g}$ is a $p$-step nilpotent Lie algebra of depth $p$, one could mimic the proof above to obtain formulas for $|F_p\mathrm{H}^2(\mathfrak{g},k)|$ and $|F_p\mathrm{H}^2(\mathfrak{g},\mathfrak{g})|$. The key observation is that the kernel of the free $p$-step nilpotent extension is central. This formula will contain the dimensions of $\mathfrak{f}_{n,p}$, $Z(\mathfrak{f}_{n,p})$ and $\mathrm{Der}(\mathfrak{f}_{n,p})$, which can all be computed using the M\"{o}bius function.
\end{remark}
\section{Bounds for $b_2(\mathfrak{g})$}
In this section we will, given a $p$-step nilpotent Lie algebra $\mathfrak{g}$, derive some upper and lower bounds for $b_2(\mathfrak{g})$. We start with two technical lemmas.
\begin{lemma} \label{lemma : cocycle lemma}
Let $\mathfrak{g}$ be a finite dimensional Lie algebra over $k$. Take $\overline{\mu} \in \mathrm{H}^2(\mathfrak{g},k)$, $x \in \mathrm{Z}(\mathfrak{g})$ and $y \in [\mathfrak{g},\mathfrak{g}]$, then $\mu(x\wedge y)=0.$ Moreover, if $\mathfrak{g}$ is $p$-step nilpotent ($p\geq 2$) then $\mu(\mathfrak{g}^{p}\wedge \mathfrak{g}^{p} )=0$ and the induced map $\mathrm{H}^2(\mathfrak{g},k) \rightarrow \mathrm{H}^2(\mathfrak{g}^{p},k)$ is zero.
\end{lemma}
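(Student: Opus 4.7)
The plan is to reduce every statement to a single application of the Chevalley--Eilenberg cocycle identity for trivial coefficients. Given a representative $\mu \in \mathrm{Hom}_k(\Lambda^2(\mathfrak{g}),k)$ of $\overline{\mu}$ and arbitrary $u,v,w \in \mathfrak{g}$, the condition $d^2\mu = 0$ collapses (since the $\mathfrak{g}$-action on $k$ is trivial) to the purely combinatorial identity
\[ \mu([u,v]\wedge w) - \mu([u,w]\wedge v) + \mu([v,w]\wedge u) = 0. \]
This is the only ingredient I expect to use.

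For the first statement, I would write $y \in [\mathfrak{g},\mathfrak{g}]$ as a sum of brackets and, by bilinearity of $\mu$, reduce to the case $y = [a,b]$. Applying the identity above to the triple $(x,a,b)$ with $x \in Z(\mathfrak{g})$ kills the first two summands because $[x,a] = [x,b] = 0$, leaving $\mu([a,b] \wedge x) = 0$. By antisymmetry this gives $\mu(x \wedge y) = 0$, as desired.

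For the second claim, I would invoke the standard facts (recalled in the preliminaries) that $\mathfrak{g}^p \subseteq Z(\mathfrak{g})$ when $\mathfrak{g}$ is $p$-step nilpotent, and that $\mathfrak{g}^p \subseteq \mathfrak{g}^2 = [\mathfrak{g},\mathfrak{g}]$ since $p \geq 2$. Hence any pair $x,y \in \mathfrak{g}^p$ satisfies the hypotheses of the first statement, so $\mu(x \wedge y) = 0$.

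For the third claim, I would observe that $\mathfrak{g}^p$ is abelian (being central), so in its Chevalley--Eilenberg complex with trivial coefficients all differentials vanish and $\mathrm{H}^2(\mathfrak{g}^p,k) \cong \mathrm{Hom}_k(\Lambda^2(\mathfrak{g}^p),k)$. Under this identification, the map $\mathrm{H}^2(\mathfrak{g},k) \to \mathrm{H}^2(\mathfrak{g}^p,k)$ induced by the inclusion $\mathfrak{g}^p \hookrightarrow \mathfrak{g}$ is simply restriction $\overline{\mu} \mapsto \mu|_{\Lambda^2(\mathfrak{g}^p)}$, which vanishes by the second claim. There is no real obstacle here; the only care needed is the sign-correct use of the cocycle identity, after which the two nilpotent-case assertions follow immediately.
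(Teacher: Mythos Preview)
Your proposal is correct and follows essentially the same route as the paper: both write $y$ as a sum of brackets, evaluate the Chevalley--Eilenberg $2$-cocycle identity on $a\wedge b\wedge x$ (or $x\wedge a\wedge b$) to kill the two terms involving $[x,-]$, and then invoke $\mathfrak{g}^p\subseteq Z(\mathfrak{g})\cap[\mathfrak{g},\mathfrak{g}]$ for the nilpotent consequences. Your treatment of the third claim (identifying $\mathrm{H}^2(\mathfrak{g}^p,k)$ with $\mathrm{Hom}_k(\Lambda^2(\mathfrak{g}^p),k)$ and noting that restriction of $\mu$ vanishes) is slightly more explicit than the paper's ``follow readily,'' but the argument is the same.
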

\begin{proof}
By definition we have $y=\sum_{j=1}^k[a_j,b_j]$, for some $a_j,b_j \in \mathfrak{g}$, and $\mu \in \mathrm{Hom}_k(\Lambda^2(\mathfrak{g}),k)$ with $d^2(\mu)=0$. Here $d^2$ is the second differential in the Chevalley-Eilenberg complex of $\mathfrak{g}$. Since  $d^2(\mu)=0$, we have
$0  =  d^2\mu(\sum_{j=1}^ka_j\wedge b_j \wedge x)
 =  \sum_{j=1}^k \Big( -\mu([a_j,b_j]\wedge x) + \mu([a_j,x]\wedge b_j) - \mu([b_j,x]\wedge a_j)\Big)
 =  \mu (x \wedge (\sum_{j=1}^k[a_j,b_j])) = \mu(x\wedge y)$.
Furthermore, if  $\mathfrak{g}$ is $p$-step nilpotent ($p\geq 2$) then $\mathfrak{g}^{p} \subset \mathrm{Z}(\mathfrak{g})\cap [\mathfrak{g},\mathfrak{g}]$. So the remaining statements follow readily.
\end{proof}

\begin{lemma} \label{lemma: betti number lemma} Consider the extension $0 \rightarrow \mathfrak{n} \rightarrow \mathfrak{g} \rightarrow \mathfrak{h} \rightarrow 0$ and assume that $\mathfrak{n} \subset Z(\mathfrak{g}) \cap \mathfrak{g}^{2}$.
Then $b_1(\mathfrak{g})=b_1(\mathfrak{h})$ and $b_2(\mathfrak{g}) = b_2(\mathfrak{h})-|\mathfrak{n}|+ |E^{1,1}_{\infty}|$. Moreover, $E^{1,1}_{\infty}$ equals the cokernel of $\mathrm{H}^2(\mathfrak{h},k) \rightarrow \mathrm{H}^2(\mathfrak{g},k)$.
\end{lemma}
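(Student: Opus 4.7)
The plan is to run the Hochschild--Serre spectral sequence $E_2^{p,q} = \mathrm{H}^p(\mathfrak{h}, \mathrm{H}^q(\mathfrak{n}, k)) \Rightarrow \mathrm{H}^{p+q}(\mathfrak{g}, k)$ associated to the given extension. Since $\mathfrak{n}$ is central in $\mathfrak{g}$ and acts trivially on $k$, the induced $\mathfrak{h}$-action on $\mathrm{H}^q(\mathfrak{n}, k) = \Lambda^q(\mathfrak{n}^*)$ is trivial, so $E_2^{p,q} = \mathrm{H}^p(\mathfrak{h}, k) \otimes \Lambda^q(\mathfrak{n}^*)$. The strategy is to compute the three subquotients $E_\infty^{2,0}$, $E_\infty^{1,1}$, $E_\infty^{0,2}$ of $\mathrm{H}^2(\mathfrak{g}, k)$ one at a time, using the hypothesis $\mathfrak{n} \subset Z(\mathfrak{g}) \cap [\mathfrak{g},\mathfrak{g}]$ to kill the two edge pieces.

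First I would extract $b_1(\mathfrak{g}) = b_1(\mathfrak{h})$ from the five-term exact sequence
\[ 0 \to \mathrm{H}^1(\mathfrak{h}, k) \to \mathrm{H}^1(\mathfrak{g}, k) \to \mathfrak{n}^* \xrightarrow{d_2} \mathrm{H}^2(\mathfrak{h}, k) \to \mathrm{H}^2(\mathfrak{g}, k). \]
Since $\mathfrak{n} \subset [\mathfrak{g}, \mathfrak{g}]$ and $\mathrm{H}^1(\mathfrak{g}, k) = (\mathfrak{g}/[\mathfrak{g},\mathfrak{g}])^*$, the restriction $\mathrm{H}^1(\mathfrak{g}, k) \to \mathfrak{n}^*$ vanishes, so exactness forces $\mathrm{H}^1(\mathfrak{h}, k) \cong \mathrm{H}^1(\mathfrak{g}, k)$ and the injectivity of $d_2^{0,1}$. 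The latter is equivalent to $E_\infty^{0,1} = \ker d_2^{0,1} = 0$, which also yields $|E_\infty^{2,0}| = b_2(\mathfrak{h}) - |\mathfrak{n}|$. Next I would invoke the previous cocycle lemma: for any $2$-cocycle $\mu$ of $\mathfrak{g}$ and any $x, y \in \mathfrak{n} \subset Z(\mathfrak{g}) \cap [\mathfrak{g},\mathfrak{g}]$ we have $\mu(x \wedge y) = 0$, so the restriction $\mathrm{H}^2(\mathfrak{g}, k) \to \mathrm{H}^2(\mathfrak{n}, k) = \Lambda^2(\mathfrak{n}^*)$ is zero. This restriction is the edge homomorphism, factoring as $\mathrm{H}^2(\mathfrak{g}, k) \twoheadrightarrow E_\infty^{0,2} \hookrightarrow \mathrm{H}^2(\mathfrak{n}, k)$, so $E_\infty^{0,2} = 0$.

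Assembling these, the convergent filtration $0 = F^3 \subset F^2 \subset F^1 \subset F^0 = \mathrm{H}^2(\mathfrak{g}, k)$ with graded pieces $F^p/F^{p+1} = E_\infty^{p,2-p}$ gives $b_2(\mathfrak{g}) = |E_\infty^{2,0}| + |E_\infty^{1,1}| + |E_\infty^{0,2}| = b_2(\mathfrak{h}) - |\mathfrak{n}| + |E_\infty^{1,1}|$. For the final statement, the inflation $\mathrm{H}^2(\mathfrak{h}, k) \to \mathrm{H}^2(\mathfrak{g}, k)$ factors as the edge map $E_2^{2,0} \twoheadrightarrow E_\infty^{2,0} \hookrightarrow \mathrm{H}^2(\mathfrak{g}, k)$ with image $F^2$, so its cokernel is $\mathrm{H}^2(\mathfrak{g}, k)/F^2$; since $E_\infty^{0,2} = 0$ gives $F^1 = F^0$, this cokernel equals $F^1/F^2 = E_\infty^{1,1}$. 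The only non-formal inputs beyond standard spectral-sequence bookkeeping are the two vanishings of edge maps, which follow respectively from $\mathfrak{n} \subset [\mathfrak{g},\mathfrak{g}]$ and from the cocycle lemma; the main subtlety I anticipate is just carefully identifying the restriction map with the edge homomorphism to $E_\infty^{0,2}$, but this is entirely routine.
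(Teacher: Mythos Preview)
Your proposal is correct and follows essentially the same approach as the paper's proof: both run the Hochschild--Serre spectral sequence, use $\mathfrak{n}\subset[\mathfrak{g},\mathfrak{g}]$ to force $E_\infty^{0,1}=0$ (giving $b_1(\mathfrak{g})=b_1(\mathfrak{h})$ and $|E_\infty^{2,0}|=b_2(\mathfrak{h})-|\mathfrak{n}|$), invoke the preceding cocycle lemma to kill $E_\infty^{0,2}$ via the vanishing of the restriction edge map, and then read off the dimension formula and the identification of $E_\infty^{1,1}$ with the cokernel of inflation from the filtration. Your write-up is in fact slightly more explicit about the filtration bookkeeping than the paper's, but the argument is the same.
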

\begin{proof} Denote by $(E_r,d_r)$ the Hochschild-Serre spectral sequence associated to the given extension of Lie algebras. Recall that the image of the map $\mathrm{H}^2(\mathfrak{h},k) \rightarrow \mathrm{H}^2(\mathfrak{g},k)$ equals $\mathrm{E}_{\infty}^{2,0}$ and the image of the map $\mathrm{H}^1(\mathfrak{g},k) \rightarrow \mathrm{H}^1(\mathfrak{n},k)$ is isomorphic to  $\mathrm{E}_{\infty}^{0,1}$. Hence, the cokernel of $\mathrm{H}^2(\mathfrak{h},k) \rightarrow \mathrm{H}^2(\mathfrak{g},k)$ equals $\mathrm{E}_{\infty}^{0,2}\oplus \mathrm{E}_{\infty}^{1,1}$. But Lemma \ref{lemma : cocycle lemma} implies that $\mathrm{H}^2(\mathfrak{g},k) \rightarrow \mathrm{H}^2(\mathfrak{n},k)$ is the zero map. Since the image of this map is isomorphic to $E_{\infty}^{0,2}$, we conclude that $E_{\infty}^{0,2}=0$, so the cokernel of $\mathrm{H}^2(\mathfrak{h},k) \rightarrow \mathrm{H}^2(\mathfrak{g},k)$ equals $\mathrm{E}_{\infty}^{1,1}$. \\
Because $\mathfrak{n}\subset \mathfrak{g}^2$, it follows that the induced map $\mathrm{H}^1(\mathfrak{g},k) \rightarrow \mathrm{H}^1(\mathfrak{n},k)$ is zero. Therefore $\mathrm{E}_{\infty}^{0,1}=0$, which  implies that $d_2^{0,1}$ is injective. Since this differential is the only one landing in the $(2,0)$-spot, we conclude that $\mathrm{E}_{\infty}^{2,0}=\mathrm{E}_{2}^{2,0}/(\mathrm{Im} \ d_2^{0,1})$. Furthermore, the spectral sequence converges to $\mathrm{H}^{\ast}(\mathfrak{g},k)$, so we have $ \mathrm{H}^{1}(\mathfrak{g},k) =\mathrm{E}_{\infty}^{0,1}\oplus \mathrm{E}_{\infty}^{1,0} = \mathrm{H}^{1}(\mathfrak{h},k)$ and
$\mathrm{H}^{2}(\mathfrak{g},k) = \mathrm{E}_{\infty}^{0,2}\oplus \mathrm{E}_{\infty}^{1,1} \oplus \mathrm{E}_{\infty}^{2,0} =\mathrm{E}_{\infty}^{1,1} \oplus \mathrm{H}^{2}(\mathfrak{h},k)/ (\mathrm{Im} \ d_2^{0,1})$. The equalities now readily follow.
\end{proof}
The following proposition provides an upper bound for the second Betti number. This bound can also be found in \cite{Yankosky}.
Here, we give an alternative proof using the previous two lemmas.
\begin{proposition} \label{prop: upperbound second betti}If $\mathfrak{g}$ is an $m$-dimensional $p$-step nilpotent Lie algebra of type $(n_1,n_2,\ldots,n_p)$, then
\[ b_2(\mathfrak{g}) \leq \binom{n_1}{2}+(m-n_1)(n_1-1). \]
In particular, if $\mathfrak{g}$ is an $m$-dimensional nilpotent Lie algebra with $b_1(\mathfrak{g})=2$, then $b_2(\mathfrak{g}) \leq m-1$.
\end{proposition}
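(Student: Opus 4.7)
The plan is to induct on the nilpotency class $p$. The base case $p=1$ is immediate: $\mathfrak{g}$ is abelian, $m=n_1$, and $b_2(\mathfrak{g})=\binom{m}{2}=\binom{n_1}{2}$, so the inequality holds with equality. For the inductive step I would apply Lemma \ref{lemma: betti number lemma} to the central extension
\[ 0 \to \mathfrak{g}^p \to \mathfrak{g} \to \mathfrak{h} \to 0, \]
where $\mathfrak{h}:=\mathfrak{g}/\mathfrak{g}^p$ is $(p-1)$-step nilpotent of type $(n_1,\ldots,n_{p-1})$ and dimension $m-n_p$. Since $p\geq 2$, one has $\mathfrak{g}^p\subseteq Z(\mathfrak{g})\cap [\mathfrak{g},\mathfrak{g}]$, so the hypotheses of the lemma are satisfied and it produces the identity
\[ b_2(\mathfrak{g}) = b_2(\mathfrak{h}) - n_p + |E_\infty^{1,1}|. \]

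The key step will be to bound $|E_\infty^{1,1}|$ by $n_1 n_p$ using the centrality of $\mathfrak{g}^p$. In the Hochschild-Serre spectral sequence of the extension above, no differential lands in bidegree $(1,1)$ for dimension reasons, so $E_\infty^{1,1}$ is a subspace of
\[ E_2^{1,1} = \mathrm{H}^1(\mathfrak{h},\mathrm{H}^1(\mathfrak{g}^p,k)). \]
Because $\mathfrak{g}^p$ lies in the center of $\mathfrak{g}$, the induced $\mathfrak{h}$-action on $\mathrm{H}^1(\mathfrak{g}^p,k)\cong (\mathfrak{g}^p)^{\ast}$ is trivial, so $E_2^{1,1}\cong \mathrm{H}^1(\mathfrak{h},k)\otimes (\mathfrak{g}^p)^{\ast}$ has dimension $b_1(\mathfrak{h})\cdot n_p = n_1 n_p$.

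Combining this with the induction hypothesis $b_2(\mathfrak{h})\leq \binom{n_1}{2}+((m-n_p)-n_1)(n_1-1)$ gives
\[ b_2(\mathfrak{g}) \leq \binom{n_1}{2}+(m-n_p-n_1)(n_1-1)-n_p+n_1 n_p = \binom{n_1}{2}+(m-n_1)(n_1-1), \]
which closes the induction. The particular case $b_1(\mathfrak{g})=2$ follows by substituting $n_1=2$, yielding $b_2(\mathfrak{g})\leq 1+(m-2)=m-1$.

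The main technical point is the triviality of the $\mathfrak{h}$-action on $\mathrm{H}^1(\mathfrak{g}^p,k)$, together with the spectral-sequence bookkeeping needed to see that $E_\infty^{1,1}$ is genuinely a subspace of $E_2^{1,1}$. Both are routine, but they are exactly what turns Lemma \ref{lemma: betti number lemma} into a usable recursive bound; once they are in place the arithmetic telescopes cleanly through the lower central series.
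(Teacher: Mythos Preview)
Your proof is correct and follows essentially the same approach as the paper: induction on $p$, application of Lemma~\ref{lemma: betti number lemma} to the extension $0\to\mathfrak{g}^p\to\mathfrak{g}\to\mathfrak{g}/\mathfrak{g}^p\to 0$, and the bound $|E_\infty^{1,1}|\leq |E_2^{1,1}|=n_1n_p$. You supply slightly more justification for the last inequality (triviality of the $\mathfrak{h}$-action and the vanishing of incoming differentials), but the argument is the same.
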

\begin{proof} We will use induction on $p$. If $p=1$, then $m=n_1$ and $b_2(\mathfrak{g})=\binom{m}{2}$, so in this case the statement is trivial. Now assume the statement is true for $(p-1)$-step nilpotent Lie algebras ($p\geq 2$) and suppose that $\mathfrak{g}$ is $p$-step nilpotent. Considering the extension
\[ 0 \rightarrow \mathfrak{g}^{p} \rightarrow \mathfrak{g} \xrightarrow{\pi} \mathfrak{g}/\mathfrak{g}^{p} \rightarrow 0, \]
we deduce from Lemma \ref{lemma: betti number lemma} that $ b_2(\mathfrak{g})=b_2(\mathfrak{g}/\mathfrak{g}^{p-1})-n_p+|E^{1,1}_{\infty}|$. Hence, using the induction hypothesis and the fact that $| \mathrm{E}_{\infty}^{1,1} | \leq | \mathrm{H}^1(\mathfrak{g}/\mathfrak{g}^{p},\mathrm{H}^1(\mathfrak{g}^{p},k))|=n_1n_p$, we find
$b_2(\mathfrak{g})  \leq b_2(\mathfrak{g}/\mathfrak{g}^{p})-n_p + n_1n_p
\leq  \binom{n_1}{2}+(m-n_p-n_1)(n_1-1) -n_p + n_1n_p = \binom{n_1}{2}+(m-n_1)(n_1-1)$.
This completes the induction and proves the proposition.
\end{proof}
We will now derive some upper and lower bounds for $b_2(\mathfrak{g})$, using the results from the previous section.
\begin{lemma} \label{lemma: lowerbound1}
Let $\mathfrak{g}$ be a $p$-step nilpotent Lie algebra of type $(n_1,\ldots,n_p)$, depth $d$ and with free $p$-step nilpotent extension
\[ 0 \rightarrow \mathfrak{n} \rightarrow \mathfrak{f}_{n_1,p} \xrightarrow{\pi} \mathfrak{g} \rightarrow 0. \] Then
 $b_2(\mathfrak{f}_{n_1,d-1})-n_{d} \leq |F_p\mathrm{H}^2(\mathfrak{g},k)|$ and equality holds if and only if $[\mathfrak{f}_{n_1,p},\mathfrak{n}] = \mathfrak{n}\cap \mathfrak{f}^{d+1}_{n_1,p}$. In particular, equality holds if $\mathfrak{n}$ is generated by Lie monomials of degree $d$ (for example when $d=p$).
\end{lemma}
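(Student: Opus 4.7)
The plan is to combine Corollary \ref{cor: central dimension exprs}, which identifies $|F_p\mathrm{H}^2(\mathfrak{g},k)|$ with $|\mathfrak{n}/[\mathfrak{f}_{n_1,p},\mathfrak{n}]|$, with an analysis of how $\mathfrak{n}$ sits inside the graded Lie algebra $\mathfrak{f}_{n_1,p}= H_1(n_1)\oplus\cdots\oplus H_p(n_1)$. The key observation is that, by the definition of depth, $\mathfrak{n} \subseteq \mathfrak{f}_{n_1,p}^d$, and consequently $[\mathfrak{f}_{n_1,p},\mathfrak{n}] \subseteq \mathfrak{n}\cap \mathfrak{f}_{n_1,p}^{d+1}$. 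This yields a surjection
\[ \mathfrak{n}/[\mathfrak{f}_{n_1,p},\mathfrak{n}] \twoheadrightarrow \mathfrak{n}/(\mathfrak{n}\cap \mathfrak{f}_{n_1,p}^{d+1}), \]
so that $|F_p\mathrm{H}^2(\mathfrak{g},k)| \geq |\mathfrak{n}/(\mathfrak{n}\cap \mathfrak{f}_{n_1,p}^{d+1})|$, with equality if and only if $[\mathfrak{f}_{n_1,p},\mathfrak{n}]=\mathfrak{n}\cap\mathfrak{f}_{n_1,p}^{d+1}$.

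Next I would identify the right-hand side with $|H_d(n_1)|-n_d$. The projection $\mathfrak{f}_{n_1,p}^d \twoheadrightarrow \mathfrak{f}_{n_1,p}^d/\mathfrak{f}_{n_1,p}^{d+1}=H_d(n_1)$ sends $\mathfrak{n}$ onto a subspace isomorphic to $\mathfrak{n}/(\mathfrak{n}\cap\mathfrak{f}_{n_1,p}^{d+1})$. Passing to the quotient by $\mathfrak{n}$ identifies the cokernel with $\mathfrak{g}^d/\mathfrak{g}^{d+1}$, which has dimension $n_d$. Therefore the image of $\mathfrak{n}$ in $H_d(n_1)$ has dimension $|H_d(n_1)|-n_d$, and since $b_2(\mathfrak{f}_{n_1,d-1})=|H_d(n_1)|$ by the formula recalled in the preliminaries, the desired inequality follows.

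For the final clause, suppose $\mathfrak{n}$ is generated, as an ideal of $\mathfrak{f}_{n_1,p}$, by Lie monomials $X_1,\ldots,X_k$ of degree $d$. Then $\mathfrak{n}=\mathrm{span}(X_1,\ldots,X_k)+[\mathfrak{f}_{n_1,p},\mathfrak{n}]$, so every $y\in\mathfrak{n}$ is congruent modulo $[\mathfrak{f}_{n_1,p},\mathfrak{n}]$ to a homogeneous degree-$d$ element $z\in H_d(n_1)$. If moreover $y \in \mathfrak{n}\cap\mathfrak{f}_{n_1,p}^{d+1}$, then $z\in\mathfrak{f}_{n_1,p}^{d+1}+[\mathfrak{f}_{n_1,p},\mathfrak{n}]\subseteq\mathfrak{f}_{n_1,p}^{d+1}$, forcing $z\in H_d(n_1)\cap\mathfrak{f}_{n_1,p}^{d+1}=0$ and hence $y\in[\mathfrak{f}_{n_1,p},\mathfrak{n}]$. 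This gives the reverse inclusion $\mathfrak{n}\cap\mathfrak{f}_{n_1,p}^{d+1}\subseteq[\mathfrak{f}_{n_1,p},\mathfrak{n}]$ needed for equality. The special case $d=p$ is immediate because then $\mathfrak{n}\subseteq\mathfrak{f}_{n_1,p}^p$ is automatically spanned by degree-$p$ monomials.

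The only genuinely delicate point is the equality case: one must be careful that ``generated by Lie monomials of degree $d$'' refers to generation as an ideal (not as a subalgebra or vector space), and then exploit that $[\mathfrak{f}_{n_1,p},\mathfrak{n}]$ absorbs all higher brackets, so only the degree-$d$ ``leading terms'' survive in the quotient $\mathfrak{n}/[\mathfrak{f}_{n_1,p},\mathfrak{n}]$; the rest of the argument is a straightforward bookkeeping of graded dimensions.
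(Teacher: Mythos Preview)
Your proposal is correct and follows essentially the same approach as the paper: both invoke Corollary~\ref{cor: central dimension exprs} to rewrite $|F_p\mathrm{H}^2(\mathfrak{g},k)|$ as $|\mathfrak{n}/[\mathfrak{f}_{n_1,p},\mathfrak{n}]|$, use the inclusion $[\mathfrak{f}_{n_1,p},\mathfrak{n}]\subseteq\mathfrak{n}\cap\mathfrak{f}_{n_1,p}^{d+1}$, and identify $|\mathfrak{n}/(\mathfrak{n}\cap\mathfrak{f}_{n_1,p}^{d+1})|$ with $|H_d(n_1)|-n_d$ via the induced surjection $\mathfrak{f}_{n_1,p}^d/\mathfrak{f}_{n_1,p}^{d+1}\twoheadrightarrow\mathfrak{g}^d/\mathfrak{g}^{d+1}$. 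For the equality case the paper writes $\mathfrak{n}=\gamma_d+\gamma_{d+1}+\cdots+\gamma_p$ explicitly, whereas you encapsulate the same content in the single identity $\mathfrak{n}=\mathrm{span}(X_1,\ldots,X_k)+[\mathfrak{f}_{n_1,p},\mathfrak{n}]$; the two arguments are equivalent.
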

\begin{proof}
The surjection $\pi: \mathfrak{f}^d_{n_1,p} \rightarrow \mathfrak{g}^d$ induces a surjection $\overline{\pi}: \frac{\mathfrak{f}^d_{n_1,p}}{\mathfrak{f}^{d+1}_{n_1,p}} \rightarrow \frac{\mathfrak{g}^{d}}{\mathfrak{g}^{d+1}}$. Since $\mathfrak{g}$ is of depth $d$, one can verify that $\ker \overline{\pi}\cong \frac{\mathfrak{n}}{\mathfrak{n}\cap \mathfrak{f}^{d+1}_{n_1,p}}$. Clearly $[\mathfrak{f}_{n_1,p},\mathfrak{n}] \subset \mathfrak{n}\cap \mathfrak{f}^{d+1}_{n_1,p}$ and $|\frac{\mathfrak{g}^{d}}{\mathfrak{g}^{d+1}}| + |\ker \overline{\pi}| = |  \frac{\mathfrak{f}^d_{n_1,p}}{\mathfrak{f}^{d+1}_{n_1,p}}|$, so Corollary \ref{cor: central dimension exprs} implies that
\[ |\frac{\mathfrak{f}^d_{n_1,p}}{\mathfrak{f}^{d+1}_{n_1,p}}| - |\frac{\mathfrak{g}^{d}}{\mathfrak{g}^{d+1}}| \leq |\frac{\mathfrak{n}}{[\mathfrak{f}_{n_1,p},\mathfrak{n}]}|=|F_p\mathrm{H}^2(\mathfrak{g},k)| . \]
Since $|\frac{\mathfrak{f}^d_{n_1,p}}{\mathfrak{f}^{d+1}_{n_1,p}}|=b_2(\mathfrak{f}_{n_1,d-1})$ and $|\frac{\mathfrak{g}^{d}}{\mathfrak{g}^{d+1}}|=n_d$, the inequality holds and it follows readily that equality holds if and only if $[\mathfrak{f}_{n_1,p},\mathfrak{n}] = \mathfrak{n}\cap \mathfrak{f}^{d+1}_{n_1,p}$. \\
Now suppose that $\mathfrak{n}$ is an ideal in $\mathfrak{f}_{n_1,p}$ generated by $L$, a set of Lie monomials of degree $d$ (i.e. $L \subset H_{d}(n_1)$ ). Also, let $\gamma_d(\mathfrak{f}_{n_1,p},L)$ be the vector space spanned by $L$ and define $\gamma_i(\mathfrak{f}_{n_1,p},L):=[\mathfrak{f}_{n_1,p},\gamma_{i-1}(\mathfrak{f}_{n_1,p},L)]$ for $i \in \{d+1,\ldots, p\}$. It is clear that $\gamma_i(\mathfrak{f}_{n_1,p},L) \subset \mathfrak{f}_{n_1,p}^i$ for all $i \in \{d,\ldots, p\}$ and $\mathfrak{n}=\gamma_d(\mathfrak{f}_{n_1,p},L) + \gamma_{d+1}(\mathfrak{f}_{n_1,p},L) + \ldots + \gamma_p(\mathfrak{f}_{n_1,p},L)$. It follows that $\mathfrak{n} \cap  \mathfrak{f}^{d+1}_{n_1,p}= \gamma_{d+1}(\mathfrak{f}_{n_1,p},L) + \ldots + \gamma_p(\mathfrak{f}_{n_1,p},L)=[\mathfrak{f}_{n_1,p},\mathfrak{n}]$, which proves the lemma.
\end{proof}
This next lemma gives upper and lower bounds for $|F_{p+1}\mathrm{H}^2(\mathfrak{g},k)|-|F_p\mathrm{H}^2(\mathfrak{g},k)|$.
\begin{lemma} \label{lemma: lowerbound2} If $\mathfrak{g}$ is a $p$-step nilpotent Lie algebra of type $(n_1,\ldots,n_p)$ with free $p$-step nilpotent extension
\[ 0 \rightarrow \mathfrak{n} \rightarrow \mathfrak{f}_{n_1,p} \xrightarrow{\pi} \mathfrak{g} \rightarrow 0, \]
then $b_2(\mathfrak{f}_{n_1,p})-b_2(\mathfrak{f}_{n_1,p-1})+n_p \geq  |F_{p+1}\mathrm{H}^2(\mathfrak{g},k)|-|F_p\mathrm{H}^2(\mathfrak{g},k)|$.
Furthermore, if $\mathfrak{g}$ is of depth $p$, then $|F_{p+1}\mathrm{H}^2(\mathfrak{g},k)|-|F_p\mathrm{H}^2(\mathfrak{g},k)| \geq \max\{0,b_2(\mathfrak{f}_{n_1,p})-n_1b_2(\mathfrak{f}_{n_1,p-1})+n_1n_p\}$.
\end{lemma}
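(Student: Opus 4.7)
The plan is to convert both inequalities into statements about the dimension of the subspace
\[
A := [i(\mathfrak{n}), \mathfrak{f}_{n_1,p+1}] \cap \mathfrak{f}_{n_1,p+1}^{p+1},
\]
where every bracket is taken inside $\mathfrak{f}_{n_1,p+1}$. The direct sum decomposition established in the proof of Corollary \ref{cor: central dimension exprs} immediately yields
\[
|F_{p+1}\mathrm{H}^2(\mathfrak{g},k)| - |F_p\mathrm{H}^2(\mathfrak{g},k)| = b_2(\mathfrak{f}_{n_1,p}) - |A|,
\]
so the claimed upper bound is equivalent to $|A| \geq b_2(\mathfrak{f}_{n_1,p-1}) - n_p$, and under the depth-$p$ hypothesis the claimed lower bound amounts to the pair of inequalities $|A| \leq b_2(\mathfrak{f}_{n_1,p})$ (trivial) and $|A| \leq n_1(b_2(\mathfrak{f}_{n_1,p-1}) - n_p)$.

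For the upper bound, note that $\pi$ restricts to a surjection $\mathfrak{f}_{n_1,p}^p \to \mathfrak{g}^p$ with kernel $\mathfrak{n} \cap \mathfrak{f}_{n_1,p}^p$, giving $|\mathfrak{n} \cap \mathfrak{f}_{n_1,p}^p| = b_2(\mathfrak{f}_{n_1,p-1}) - n_p$. Identifying $\mathfrak{f}_{n_1,p}^p$ via $i$ with the graded summand $H_p(n_1) \subset \mathfrak{f}_{n_1,p+1}$, for $y \in \mathfrak{n}\cap \mathfrak{f}_{n_1,p}^p$ and $x \in H_1(n_1)$ the bracket $[y,x]$ lies both in $[H_p(n_1), H_1(n_1)] \subset \mathfrak{f}_{n_1,p+1}^{p+1}$ and in $[i(\mathfrak{n}), \mathfrak{f}_{n_1,p+1}]$, so $[\mathfrak{n}\cap \mathfrak{f}_{n_1,p}^p, H_1(n_1)] \subseteq A$. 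The crucial input is the classical centralizer theorem for free Lie algebras: for $n_1 \geq 2$, the centralizer of any nonzero $x \in \mathfrak{f}_{n_1}$ equals $kx$ (seen quickly by viewing $\mathfrak{f}_{n_1}$ as the primitives in the tensor algebra on $H_1(n_1)$). Consequently, for $p \geq 2$ and any fixed $0 \neq x \in H_1(n_1)$, the map $\mathrm{ad}(x): H_p(n_1) \to H_{p+1}(n_1)$ is injective, and hence
\[
|A| \geq |[\mathfrak{n} \cap \mathfrak{f}_{n_1,p}^p,\, x]| = |\mathfrak{n} \cap \mathfrak{f}_{n_1,p}^p| = b_2(\mathfrak{f}_{n_1,p-1}) - n_p.
\]
The degenerate cases $p = 1$ or $n_1 = 1$ force $\mathfrak{n} = 0$ and the inequality is trivial.

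For the lower bound, assume $\mathfrak{g}$ has depth $p$, so $\mathfrak{n} \subset \mathfrak{f}_{n_1,p}^p = H_p(n_1)$ and hence $i(\mathfrak{n}) \subset H_p(n_1) \subset \mathfrak{f}_{n_1,p+1}$. Since $[H_p(n_1), H_j(n_1)] \subset H_{p+j}(n_1) = 0$ in $\mathfrak{f}_{n_1,p+1}$ whenever $j \geq 2$, only brackets with $H_1(n_1)$ contribute, and those land in $\mathfrak{f}_{n_1,p+1}^{p+1}$. Thus $A = [i(\mathfrak{n}), \mathfrak{f}_{n_1,p+1}] = [\mathfrak{n}, H_1(n_1)]$, which being the image of a bilinear map satisfies $|A| \leq n_1|\mathfrak{n}| = n_1(b_2(\mathfrak{f}_{n_1,p-1})-n_p)$, while trivially $|A| \leq |\mathfrak{f}_{n_1,p+1}^{p+1}| = b_2(\mathfrak{f}_{n_1,p})$. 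Substituting these into the displayed identity gives the desired lower bound.

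The main obstacle is the upper-bound direction: the mere inclusion $[\mathfrak{n} \cap H_p(n_1), H_1(n_1)] \subseteq A$ is too coarse on its own, since for a generic subspace $V \subset H_p(n_1)$ the dimension of $[V, H_1(n_1)]$ can drop below $\dim V$. It is the rigidity of the free Lie algebra, encoded in the centralizer theorem, that turns the single operator $\mathrm{ad}(x)$ into an injection on $H_p(n_1)$ and so forces the required dimension count.
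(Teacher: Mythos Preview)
Your proof is correct and follows essentially the same route as the paper: reduce via Corollary~\ref{cor: central dimension exprs} to bounding $|A|$, then obtain $|A|\ge b_2(\mathfrak{f}_{n_1,p-1})-n_p$ by exhibiting $[\,\mathfrak{n}\cap H_p(n_1),\,x\,]\subseteq A$ for a fixed generator $x$ and using that $\mathrm{ad}(x)$ is injective on $H_p(n_1)$; the depth-$p$ lower bound then follows from $A=[\mathfrak{n},H_1(n_1)]$ and the trivial estimate $|[A,B]|\le |A|\,|B|$. The only difference is cosmetic: the paper asserts the injectivity of $\mathrm{ad}(x_1)$ on $H_p(n_1)$ directly (``since we are working in a free nilpotent Lie algebra''), whereas you derive it from the centralizer theorem for free Lie algebras, which is a perfectly good (and arguably more transparent) justification of the same fact. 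One small remark: your closing sentence that ``for a generic subspace $V\subset H_p(n_1)$ the dimension of $[V,H_1(n_1)]$ can drop below $\dim V$'' is not literally true in the free Lie algebra (precisely because $\mathrm{ad}(x)$ is injective there); presumably you meant that in an arbitrary Lie algebra this could happen, which is the correct point to make.
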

\begin{proof}
First notice that $[i(\mathfrak{n})\cap \mathfrak{f}^p_{n_1,p+1}, \mathfrak{f}_{n_1,p+1}] \subset [i(\mathfrak{n}),\mathfrak{f}_{n_1,p+1}]\cap \mathfrak{f}_{n_1,p+1}^{p+1}$.
Now suppose that $\{v_1,\ldots,v_m\}$ is a basis for $i(\mathfrak{n})\cap \mathfrak{f}^p_{n_1,p+1}$. Then $\{[x_1,v_1],\ldots,[x_1,v_m]\}$ is a linearly independent subset of $[i(\mathfrak{n})\cap \mathfrak{f}^p_{n_1,p+1}, \mathfrak{f}_{n_1,p+1}]$. Indeed, $\sum_{i=1}^m\lambda_i[x_1,v_i]=0$ implies that $[x_1,\sum_{i=1}^m\lambda_i v_i]=0$. Since we are working in a free nilpotent Lie algebra it follows that $\sum_{i=1}^m\lambda_i v_i=0$, so all the $\lambda_i$'s are zero. By noting that $|i(\mathfrak{n})\cap \mathfrak{f}^p_{n_1,p+1}|= |\mathfrak{f}_{n_1,p}^p|-n_p= b_2(\mathfrak{f}_{n_1,p-1})-n_p$, we obtain $b_2(\mathfrak{f}_{n_1,p-1})-n_p \leq |[i(\mathfrak{n}),\mathfrak{f}_{n_1,p+1}]\cap \mathfrak{f}_{n_1,p+1}^{p+1}|$. Therefore, Corollary \ref{cor: central dimension exprs} yields $b_2(\mathfrak{f}_{n_1,p})-b_2(\mathfrak{f}_{n_1,p-1})+n_p \geq  |F_{p+1}\mathrm{H}^2(\mathfrak{g},k)|-|F_p\mathrm{H}^2(\mathfrak{g},k)|$. \\
Now assume that $\mathfrak{g}$ is of depth $p$. In this case $[i(\mathfrak{n})\cap \mathfrak{f}^p_{n_1,p+1}, \mathfrak{f}_{n_1,p+1}] = [i(\mathfrak{n}),\mathfrak{f}_{n_1,p+1}]\cap \mathfrak{f}_{n_1,p+1}^{p+1}$.
Clearly, we also have $[i(\mathfrak{n})\cap \mathfrak{f}^p_{n_1,p+1}, \mathfrak{f}_{n_1,p+1}^{2}]=0$. Hence, it follows that  $[i(\mathfrak{n}),\mathfrak{f}_{n_1,p+1}]\cap \mathfrak{f}_{n_1,p+1}^{p+1}= [i(\mathfrak{n})\cap \mathfrak{f}^p_{n_1,p+1}, H_1(n_1)]$. In general, if $A$ is an $n$-dimensional subspace of a Lie algebra and  $B$ is an $m$-dimensional subspace, we have $| [A,B]|\leq nm$. Since $|H_1(n_1)|=n_1$ and $|i(\mathfrak{n})\cap \mathfrak{f}^p_{n_1,p+1}|= b_2(\mathfrak{f}_{n_1,p-1})-n_p$, we are done.
\end{proof}
Putting all the pieces together, we arrive at the following estimates for the second Betti number.
\begin{theorem} \label{th: betti number estimate}
Let $\mathfrak{g}$ be a $p$-step nilpotent Lie algebra of type $(n_1,n_2,\ldots,n_{p})$, dimension $m$, depth $d$ and with free $p$-step nilpotent extension
\[ 0 \rightarrow \mathfrak{n} \rightarrow \mathfrak{f}_{n_1,p} \xrightarrow{\pi} \mathfrak{g} \rightarrow 0. \]
Define $c:= b_2(\mathfrak{f}_{n_1,d-1})-n_{d} $ and $C:= \binom{n_1}{2}+(m-n_1)(n_1-1)$ . In general, we have
\[ c \leq b_2(\mathfrak{g}) \leq C.\]
More specifically, if $\mathfrak{n}$ is generated by Lie monomials of degree $d$, we have
\[ c \leq b_2(\mathfrak{g}) \leq \min\{c + b_2(\mathfrak{f}_{n_1,p})-b_2(\mathfrak{f}_{n_1,p-1})+n_p,C \} \]
and if $\mathfrak{g}$ is of depth $p$ then
\[ c + \max{\{0,b_2(\mathfrak{f}_{n_1,p})-n_1b_2(\mathfrak{f}_{n_1,p-1})+n_1n_p\}} \leq b_2(\mathfrak{g}) \leq \min\{c + b_2(\mathfrak{f}_{n_1,p})-b_2(\mathfrak{f}_{n_1,p-1})+n_p,C \}. \]
\end{theorem}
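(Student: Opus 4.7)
The plan is to assemble the estimate by combining Proposition \ref{prop: upperbound second betti} with Lemmas \ref{lemma: lowerbound1} and \ref{lemma: lowerbound2}. The key preliminary observation is that $k$ is a $1$-step nilpotent $\mathfrak{g}$-module, so the filtration of $\mathrm{H}^2(\mathfrak{g},k)$ terminates at index $p+q=p+1$, i.e.\ $|\mathrm{H}^2(\mathfrak{g},k)| = |F_{p+1}\mathrm{H}^2(\mathfrak{g},k)|$. This lets one split
\[ b_2(\mathfrak{g}) = |F_p\mathrm{H}^2(\mathfrak{g},k)| + \bigl(|F_{p+1}\mathrm{H}^2(\mathfrak{g},k)|-|F_p\mathrm{H}^2(\mathfrak{g},k)|\bigr), \]
and treat the two summands separately using the two preceding lemmas.

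For the general bounds, the first summand is at least $c$ by Lemma \ref{lemma: lowerbound1}, and the second summand is non-negative, so $c \leq b_2(\mathfrak{g})$. The upper bound $b_2(\mathfrak{g}) \leq C$ is exactly Proposition \ref{prop: upperbound second betti}. For the case when $\mathfrak{n}$ is generated by Lie monomials of degree $d$, Lemma \ref{lemma: lowerbound1} strengthens the estimate on the first summand to an equality $|F_p\mathrm{H}^2(\mathfrak{g},k)| = c$, and Lemma \ref{lemma: lowerbound2} bounds the second summand above by $b_2(\mathfrak{f}_{n_1,p}) - b_2(\mathfrak{f}_{n_1,p-1}) + n_p$. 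Adding these and intersecting with the general upper bound $C$ produces the displayed $\min$.

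For the depth-$p$ case, I would first point out that $d = p$ forces $\mathfrak{n} \subseteq \mathfrak{f}_{n_1,p}^p$, hence $[\mathfrak{f}_{n_1,p},\mathfrak{n}] \subseteq \mathfrak{f}_{n_1,p}^{p+1} = 0$ and $\mathfrak{n}\cap \mathfrak{f}_{n_1,p}^{p+1} = 0$, so the equality condition of Lemma \ref{lemma: lowerbound1} is automatic, even without invoking the monomial hypothesis. Consequently $|F_p\mathrm{H}^2(\mathfrak{g},k)| = c$ and the upper bound derived above still applies. For the improved lower bound, the depth-$p$ portion of Lemma \ref{lemma: lowerbound2} bounds the second summand from below by $\max\{0, b_2(\mathfrak{f}_{n_1,p}) - n_1 b_2(\mathfrak{f}_{n_1,p-1}) + n_1 n_p\}$, which yields the advertised inequality after adding $c$.

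No step is a serious obstacle, since the real work has already been done in the two lemmas and the earlier proposition. The one point requiring care is the logical verification that the depth-$p$ hypothesis is already strong enough to force equality in Lemma \ref{lemma: lowerbound1} without any additional monomial assumption, so that the upper and lower bounds in case (iii) sit correctly on either side of the same quantity $c$.
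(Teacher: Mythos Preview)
Your proposal is correct and takes essentially the same approach as the paper, which simply states that the theorem follows immediately by combining Proposition \ref{prop: upperbound second betti}, Lemma \ref{lemma: lowerbound1}, Lemma \ref{lemma: lowerbound2}, and Corollary \ref{cor: central dimension exprs}. In fact you give considerably more detail than the paper does, including the explicit verification that depth $p$ forces both sides of the equality condition in Lemma \ref{lemma: lowerbound1} to vanish; this is exactly the right way to fill in the one-line proof.
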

\begin{proof}
This follows immediately, by combining Proposition \ref{prop: upperbound second betti}, Lemma \ref{lemma: lowerbound1}, Lemma \ref{lemma: lowerbound2} and Corollary \ref{cor: central dimension exprs}.
\end{proof}
In the special case of a $2$-step nilpotent Lie algebra, we find the following estimates.
\begin{corollary} Let $\mathfrak{g}$ be a $2$-step nilpotent Lie algebra of type $(n_1,n_2)$, then
\[ \binom{n_1}{2}-n_2 + \max{\{0, 2\binom{n_1+1}{3}-n_1\binom{n_1}{2}+n_1n_2\}}\leq b_2(\mathfrak{g}) \leq \binom{n_1}{2}+n_2(n_1-1). \]
\end{corollary}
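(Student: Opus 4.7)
The plan is to apply Theorem \ref{th: betti number estimate} directly to the case $p=2$, and observe that everything reduces to elementary substitution. The key preliminary observation is that every $2$-step nilpotent Lie algebra automatically has depth $d=p=2$: indeed, from the definition of a free $p$-step nilpotent extension, $\mathfrak{n}\subset[\mathfrak{f}_{n_1,2},\mathfrak{f}_{n_1,2}]=\mathfrak{f}_{n_1,2}^{2}$, and since $\mathfrak{f}_{n_1,2}^{3}=0$, the largest index with $\mathfrak{n}\subset\mathfrak{f}_{n_1,2}^{d}$ is $d=2$. Consequently, the strongest (depth-$p$) form of Theorem \ref{th: betti number estimate} applies, giving both the lower and upper bounds with the additional $\max\{0,\ldots\}$ term.

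Next, I would evaluate each ingredient. Because $\mathfrak{f}_{n_1,1}$ is the abelian Lie algebra of dimension $n_1$, its Chevalley--Eilenberg complex has trivial differentials, so $b_2(\mathfrak{f}_{n_1,1})=\binom{n_1}{2}$. Thus the constant $c=b_2(\mathfrak{f}_{n_1,d-1})-n_d$ specializes to $\binom{n_1}{2}-n_2$. For the constant $C$, substituting $m=n_1+n_2$ into $\binom{n_1}{2}+(m-n_1)(n_1-1)$ yields $\binom{n_1}{2}+n_2(n_1-1)$, which is exactly the stated upper bound.

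The only nontrivial computation is $b_2(\mathfrak{f}_{n_1,2})$, which by the preliminaries equals $|H_3(n_1)|$. Applying the M\"obius formula $|H_i(n)|=\frac{1}{i}\sum_{d\mid i}\mu(d)n^{i/d}$ for $i=3$ gives $\frac{1}{3}(n_1^{3}-n_1)=\frac{n_1(n_1-1)(n_1+1)}{3}=2\binom{n_1+1}{3}$. Substituting $c$, $C$, $b_2(\mathfrak{f}_{n_1,2})=2\binom{n_1+1}{3}$, $b_2(\mathfrak{f}_{n_1,1})=\binom{n_1}{2}$, and $n_p=n_2$ into the depth-$p$ lower bound of Theorem \ref{th: betti number estimate} produces the stated expression $\binom{n_1}{2}-n_2+\max\{0,\,2\binom{n_1+1}{3}-n_1\binom{n_1}{2}+n_1n_2\}$.

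For the upper bound, Theorem \ref{th: betti number estimate} offers $\min\{c+b_2(\mathfrak{f}_{n_1,p})-b_2(\mathfrak{f}_{n_1,p-1})+n_p,\,C\}$; the second term in this minimum is precisely $C=\binom{n_1}{2}+n_2(n_1-1)$, and dropping the first (possibly tighter) term still yields a valid upper bound, namely the one stated. Thus the corollary follows without any real obstacle, since the $2$-step case is exactly where Theorem \ref{th: betti number estimate} is at its cleanest: the depth-$p$ hypothesis is automatic, and the only computation of substance is the evaluation $|H_3(n_1)|=2\binom{n_1+1}{3}$ via the M\"obius formula.
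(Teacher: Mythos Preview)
Your proposal is correct and follows exactly the approach of the paper's own proof, which simply notes that a $2$-step nilpotent Lie algebra is automatically of depth $2$ and then substitutes $b_2(\mathfrak{f}_{n_1,1})=\binom{n_1}{2}$ and $b_2(\mathfrak{f}_{n_1,2})=2\binom{n_1+1}{3}$ into the final statement of Theorem~\ref{th: betti number estimate}. Your write-up is just a more detailed version of the paper's one-sentence justification.
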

\begin{proof} This is immediate from the final statement of the previous theorem, since a $2$-step nilpotent Lie algebra is of depth $2$, $b_2(\mathfrak{f}_{n_1,1})=\binom{n_1}{2}$ and $b_2(\mathfrak{f}_{n_1,2})=2\binom{n_1+1}{3}$.
\end{proof}
\section{Some computations in the $2$-step nilpotent case} \label{sec: examples}
To illustrate how our approach using free nilpotent extensions can be useful for doing explicit calculations, we will compute the second Betti number of two classes of $2$-step nilpotent Lie algebras. \\
Let us first agree upon a basis for $\mathfrak{f}_{n,3}$, the free $3$-step nilpotent basis on $\{x_1,\ldots,x_n\}$.
Recall that $\mathfrak{f}_{n,3}$ is graded: $\mathfrak{f}_{n,3}=H_{1}(n)\oplus H_{2}(n)\oplus H_{3}(n)$.
As basis for $H_1(n)$, we will obviously take $\{x_1,\ldots,x_n\}$. As basis for $H_2(n)$, we will take
\[ \{ [x_i,x_j] \ | \ i < j , i,j \in \{1,\ldots n \}\}. \]
We clearly have $|H_2(n)|=\binom{n}{2}$. Finally, as basis for $H_3(n)$ we will take $A \cup B \cup C$, with
\[A=\{ [x_i,[x_i,x_j]] \ | \ i,j\in \{1,\ldots,n \}  \ \mbox{s.t.} \ i<j\} \]
\[B=\{ [x_j,[x_i,x_j]] \ | \ i,j \in \{1,\ldots,n \} \ \mbox{s.t.} \ i<j\} \]
\[C=\{ [x_i,[x_j,x_k]],[x_j,[x_i,x_k]]  \ | \ i,j,k \in \{1,\ldots,n \} \ \mbox{s.t.} \ i<j<k \}. \]
One checks that $ |H_3(n)|=2\binom{n+1}{3}$.
\begin{example}\rm
Let $\mathfrak{g}$ be a $2$-step nilpotent Lie algebra over $k$ of type $(n,1)$, i.e. $|[\mathfrak{g},\mathfrak{g}]|=1$. It is known that in this case $\mathfrak{g}$ is isomorphic to $k^m\oplus \mathfrak{h}_n$ for some $m$ and $n$ such that $2n+m=n$, where $k^m$ is the $m$-dimensional abelian Lie algebra and $\mathfrak{h}_n$ is the $(2n+1)$-dimensional Heisenberg algebra (for example, see \cite{DecatDekimpeIgodt}, \cite{GozeKhakimdjanov}). Hence, the K\"{u}nneth formula entails that $b_2(\mathfrak{g})=b_2(\mathfrak{h_n})+\binom{m}{2}+2mn$. If $n=1$, then $\mathfrak{h}_n$ is just the free $2$-step nilpotent Lie algebra on $2$ generators, so $b_2(\mathfrak{h}_1)=2$. If $n\geq 2$, we have $b_2(\mathfrak{h}_n)= \binom{2n}{2}-1$. Note that the Betti numbers of $\mathfrak{h}_n$ over field of characteristic zero were computed in \cite{Santharoubane}, and over a field of prime characteristic in \cite{CairnsJambor}. \\
Now let us compute $b_2(\mathfrak{h}_n)$ ($n \geq 2$) using a free $2$-step nilpotent extension. Take $\mathfrak{f}_{2n,2}$ to be the free $2$-step nilpotent Lie algebra generated by $\{x_1,\ldots,x_n,y_1,\ldots,y_n\}$.  $\mathfrak{h}_n$ is a $2$-step nilpotent $(2n+1)$-dimensional Lie algebra with basis $\{x_1,\ldots,x_n,y_1,\ldots,y_n,z\}$ and non-zero Lie brackets given by $[x_i,y_i]=z$ for all $i \in \{1,\ldots,n\}$. So $\mathfrak{h}_n$ is of type $(2n,1)$. We leave it to the reader to check that $\mathfrak{h}_n$ has free $2$-step nilpotent extension
\[0 \rightarrow \mathfrak{n} \rightarrow \mathfrak{f}_{2n,2} \xrightarrow{\pi} \mathfrak{h}_n \rightarrow 0,\]
where $\mathfrak{n}$ is generated by the elements \[\{[x_i,x_j] \ , \  [y_i,y_j] \ , \ [x_i,y_j] \ , \ [x_i,y_i]-[x_j,y_j] \ | \ i \neq j \}.\]  We claim that $[i(\mathfrak{n}),\mathfrak{f}_{2n,3}] = \mathfrak{f}_{2n,3}^{3}$. To prove this, consider a basis for $\mathfrak{f}_{2n,3}^3=H_{3}(2n)$ as the one described above. Looking at the generators for $\mathfrak{n}$, the only non-trivial thing to verify is whether the basis elements of the form $[x_j,[x_i,y_i]]$ and $[y_j,[x_i,y_i]]$ are contained in $[i(\mathfrak{n}),\mathfrak{f}_{2n,3}]$. If $j$ does not equal $i$, we can use the Jacobi identity to see that these elements are contained in $[i(\mathfrak{n}),\mathfrak{f}_{2n,3}]$. Now suppose that $i=j$. We will show that $[x_i,[x_i,y_i]] \in [i(\mathfrak{n}),\mathfrak{f}_{2n,3}]$. Take some $s \neq i$. Because $[x_i,y_i]-[x_s,y_s]$ is a generator for $\mathfrak{n}$, we know that $[x_i,[x_i,y_i]]-[x_i,[x_s,y_s]]\in [i(\mathfrak{n}),\mathfrak{f}_{2n,3}]$. Since the second term is contained in $[i(\mathfrak{n}),\mathfrak{f}_{2n,3}]$, we are done. Similarly, we see that $[y_i,[x_i,y_i]] \in [i(\mathfrak{n}),\mathfrak{f}_{2n,3}]$, so the claim is proven. \\
Since $b_2(\mathfrak{f}_{2n,2})=| \mathfrak{f}_{2n,3}^3|=| [i(\mathfrak{n}),\mathfrak{f}_{2n,3}]|$ and $|\frac{\mathfrak{n}}{[\mathfrak{f}_{2n,2},\mathfrak{f}_{2n,2}]}|=|\mathfrak{n}|=\binom{2n}{2}-1$, Corollary \ref{cor: central dimension exprs} implies that $b_2(\mathfrak{h}_n)= \binom{2n}{2}-1$. Using Corollary \ref{cor: central extension}, this also shows that $\mathfrak{h}_n$ ($n \geq 2$) does not have central extensions of class $3$. \\
We can also use Corollary \ref{cor: central dimension exprs} to construct a basis for $\mathrm{H}^2(\mathfrak{h}_n,k)$. To do this, we first construct a $2$-cocycle that classifies our chosen free $2$-step nilpotent extension of $\mathfrak{h}_n$. Let $s: \mathfrak{h}_n \rightarrow \mathfrak{f}_{2n,2}$ be the vector space splitting of $\pi:\mathfrak{f}_{2n,2} \rightarrow \mathfrak{h}_n$ defined by $s(x_i)=x_i$, $s(y_i)=y_i$ for all $i \in \{1,\ldots,n\}$ and $s(z)=[x_1,y_1]$. Then $f: \Lambda^2(\mathfrak{h}_n) \rightarrow \mathfrak{n}: a \wedge b: [s(a),s(b)]-s([a,b])$ is a $2$-cocycle that classifies our chosen $2$-step nilpotent extension of $\mathfrak{h}_n$. One easily verifies that $f$ is determined by $f(x_i \wedge x_j)=[x_i,x_j]$, $f(y_i \wedge y_j)=[y_i,y_j]$ and $f(x_i \wedge y_i)=[x_i,y_i]-[x_1,y_1]$ for all $i,j$. Now choose a basis for $\mathrm{Hom}_k(\mathfrak{n},k)$ and denote it by $\{\varphi_1,\ldots,\varphi_{d}\}$. Then $\{\overline{\varphi_1 \circ f},\ldots, \overline{\varphi_{d} \circ f}\}$ is a basis for $\mathrm{H}^2(\mathfrak{h}_n,k)$.
\end{example}
\begin{example} \label{ex: 1dim kernel}\rm
Let $\mathfrak{g}$ be a $2$-step nilpotent Lie algebra over $k$ of type $(n,\binom{n}{2}-1)$ with free $2$-step nilpotent extension
\begin{equation} \label{eq: example} 0 \rightarrow \mathfrak{n} \rightarrow \mathfrak{f}_{n,2} \xrightarrow{\pi} \mathfrak{g} \rightarrow 0, \end{equation}
i.e. $|\mathfrak{n}|=1$. Let $X \in [\mathfrak{f}_{n,2},\mathfrak{f}_{n,2}] $ be a basis for $\mathfrak{n}$, hence $\mathfrak{n}=<X>$. We define the \emph{length} $l(X)$ of $X$ as follows
\[ l(X)=\min\{ k \in \mathbb{N}_0 \ | \ X = [a_1,b_1]+ \ldots + [a_k,b_k] \ \mbox{for some} \ a_i,b_i \in \mathfrak{f}_{n,2} \}. \]
Take $X_1,X_2 \in [\mathfrak{f}_{n,2},\mathfrak{f}_{n,2}]$. In \cite{DekimpeDeschamps}, it is proven that $\frac{\mathfrak{f}_{n,2}}{<X_1>}\cong \frac{\mathfrak{f}_{n,2}}{<X_2>}$ if and only if $l(X_1)=l(X_2)$. It is also shown that $l(X)\leq \lfloor \frac{n}{2} \rfloor$ for every $X \in [\mathfrak{f}_{n,2},\mathfrak{f}_{n,2}]$. It follows that there are, up to isomorphisms, exactly $\lfloor \frac{n}{2} \rfloor$ $2$-step nilpotent Lie algebras of type  $(n,\binom{n}{2}-1)$.
We will show that all these Lie algebras have the same second Betti number, namely $2\binom{n+1}{3}-n+1$. \\
Consider the free nilpotent extension $(\ref{eq: example})$, with $\mathfrak{n}=<X>$. Denote the generators of $\mathfrak{f}_{n,3}$ by $\{x_1,\ldots,x_{n}\}$ and give $\mathfrak{f}_{n,3}$ the basis described in the beginning of this section. We claim that $|[i(\mathfrak{n}),\mathfrak{f}_{n,3}]|=n$. Obviously, we can uniquely write $X = \sum_{i < j}a_{i,j}[x_i,x_j]$ and $[i(\mathfrak{n}),\mathfrak{f}_{n,3}]=\mathrm{span}\{[x_i,X] \ | \ i \in \{1,\ldots,n \}\}$. Hence, to prove the claim it suffices to show that $\{[x_i,X] \ | \ i \in \{1,\ldots,n \}\}$ is a linearly independent subset of $\mathfrak{f}_{n,3}^3$. Assume that $(\lambda_1,\ldots,\lambda_{n}) \in k^{n}$ such that
\begin{equation} \label{eq: sum}\sum_{l,i < j}\lambda_l a_{i,j}[x_l,[x_i,x_j]]= 0. \end{equation}
Fix an $l \in \{1,\ldots,n\}$. If $a_{l,j}$ is non-zero for some $j$ (this implies $l < j$), then the term $\lambda_la_{l,j}[x_l,[x_l,x_j]]$ appears in the left hand side of (\ref{eq: sum}). Also note that $[x_l,[x_l,x_j]]$ cannot appear with any other coefficient in (\ref{eq: sum}). So if we write the left hand side of (\ref{eq: sum}) in terms of the basis for $\mathfrak{f}_{n,3}^3$, the coefficient of $[x_l,[x_l,x_j]]$ will be $\lambda_la_{l,j}$. It follows that $\lambda_la_{l,j}=0$, hence $\lambda_l=0$. If $a_{j,l}$ is non-zero for some $j$, then a similar reasoning shows that $\lambda_l$ is also zero. \\
Now assume that  $a_{l,j}$ and $a_{j,l}$ are zero for all $j$. Because $\omega \neq 0$, we know that $a_{i,j}\neq 0$ for some $i < j$. Hence the term $\lambda_la_{i,j}[x_l,[x_i,x_j]]$ appears in (\ref{eq: sum}). Now look at the expansion of the left hand side of (\ref{eq: sum}) in term of the basis for $\mathfrak{f}_{n,3}^3$ and keep in mind that $a_{l,s}$ and $a_{s,l}$ are zero for all $s$. If $l<j$, then the coefficient of $[x_l,[x_i,x_j]]$ will be $\lambda_la_{i,j}$. If $j<l$, then the coefficient of $[x_j,[x_i,x_l]]$ will be $\lambda_la_{i,j}$.
So in both cases it follows that $\lambda_la_{i,j}=0$, therefore $\lambda_l=0$. \\
Because $b_2(\mathfrak{f}_{n,2})=2\binom{n+1}{3}$ and $|\frac{\mathfrak{n}}{[\mathfrak{f}_{2n,2}.\mathfrak{f}_{2n,2}]}|=1$, Corollary \ref{cor: central dimension exprs} says that $b_2(\mathfrak{g})=2\binom{n+1}{3}-n+1$. It also follow from Corollary \ref{cor: central extension} that $2$-step nilpotent Lie algebras of type $(n,\binom{n}{2}-1)$ have a central extension of class $3$.
\end{example}
\section*{Acknowledgements}
The author would like to thank Nansen Petrosyan for the helpful discussions that led up to this paper.


\begin{thebibliography}{12345}
%\addcontentsline{toc}{section}{References}
\bibitem{Cairns}  G. Cairns and B. Jessup,
    {\em New bounds on the Betti numbers of nilpotent Lie algebras}, Comm. Algebra $25$ $(1997)$ $415-430$.
\bibitem{CairnsJambor}  G. Cairns and S. Jambor,
    {\em The cohomology of the Heisenberg Lie algebras over fields of finite characteristic} , Proc. Am. Math. Soc. $136$ $(2008)$ $3803-3808$.
\bibitem{Corwin} L. Corwin and F.P. Greenleaf,
    {\em Representations of nilpotent Lie groups and their applications part $1$: Basic theory and examples}, Cambridge university press ($2004$)
\bibitem{DecatDekimpeIgodt}   T. De Cat, K. Dekimpe and P. Igodt,
    {\em Translations in simply transitive affine actions of Heisenberg type Lie groups}, Linear Algebra and its Applications $45:2$ ($2003$) $269-280$
\bibitem{DekimpeDeschamps}  K. Dekimpe and S. Deschamps,
    {\em Anosov diffeomorphisms on a class of 2-step nilmanifolds}, Glasgow Mathematical Journal $359:1-3$ ($2003$) $101-111$
\bibitem{Deninger}  Ch. Deninger and W. Singhof,
    {\em On the cohomology of nilpotent Lie algebras}  Bulletin de la S.M.F., $116$ ($1988$), $3-14$
\bibitem{Dixmier} J. Dixmier,
    {\em Cohomologie des alg\`{e}bres de Lie nilpotentes} Acta Sci. Math. Szeged $16$ ($1955$), $246-250$
\bibitem{GozeKhakimdjanov} M. Goze and Y. Khakimdjanov,
    {\em Nilpotent Lie algebras} Kluwer Academic Publishers, ($1996$)
\bibitem{Halperin} S. Halperin,
    {\em Rational Homotopy and torus actions}, Aspects of Topology, ed I.M. James and E.H. Kronheimer, London Math. Soc. Lecture Notes Ser. $93$, $1985$
\bibitem{HochSerre} G. Hochschild and J-P. Serre,
    {\em Cohomology of Lie algebras}, The Annals of Mathematics, Second Series, $57$, No. $3$ ($1953$),  $591-603$
\bibitem{Jacobson} N. Jacobson,
    {\em Lie algebras}, Dover publications ($1979$)
\bibitem{Knapp} A. W. Knapp,
    {\em Lie groups, Lie algebras and cohomology}, Princeton University Press ($1988$)
\bibitem{Koch} H. Koch,
    {\em Generator and relation ranks for finite-dimensional nilpotent Lie algebras}  Algebra Logic $16$ ($1978$), $246-253$
\bibitem{Malliavin} M-P. Malliavin-Brameret,
    {\em Cohomologie d'alg\`{e}bres de Lie nilpotentes et caract\'{e}ristiques d'Euler-Poincar\'{e}} Bull. Sci. Math. $100$ ($1976$), $269-287$
\bibitem{McClearly} J. McClearly,
    {\em A user's guide to spectral sequences}, Cambrigde University Press ($2001$)
\bibitem{Nomizu} K. Nomizu,
    {\em On the cohomology of compact homogeneous spaces of nilpotent Lie groups} Annals of mathematics, $59$ ($1954$), $531-538$
\bibitem {Santharoubane} L. J. Santharoubane,
    {\em Cohomology of Heisenberg Lie Algebras}, Proc. Am. Math. Soc. $87:1$ $(1983)$, $23-28$.
\bibitem{Serre} J-P. Serre,
    {\em Lie Algebras and Lie Groups}, Lecture Notes in Mathematics, Springer-Verlag, ($1992$).
\bibitem{Tirao} P. Tirao,
    {\em A refinement of the Toral Rank Conjecture for 2-step nilpotent Lie algebras}, Proc. Amer. Math. Soc. $185$ $(2000)$, $2875-2878$.
\bibitem{Vergne} M. Vergne,
    {\em Cohomologie des alg\`{e}bres de Lie nilpotentes. Application \`{a} l'\'{e}tude de la vari\'{e}t\'{e} des alg\`{e}bres de Lie nilpotentes},
    Bulletin de la S.M.F., $98$ ($1970$), $81-116$
\bibitem{Weibel} C. A. Weibel,
    {\em An introduction to homological algebra}, Cambridge University Press ($1994$)
\bibitem{Yankosky} B. Yankosky,
    {\em On the multiplier of a Lie Algebra}, Journal of Lie Theory  $13:1$ ($2003$) $1-6$
\end{thebibliography}
\end{document}